\numberwithin{equation}{section}
\theoremstyle{plain}
\newtheorem{Th}{Theorem}[section]
\newtheorem{Lemma}[Th]{Lemma}
\newtheorem{Prop}[Th]{Proposition}
 \theoremstyle{definition}
\newtheorem{Def}[Th]{Definition}
\newtheorem*{Def*}{Definition}
\newtheorem{Rem}[Th]{Remark}
\newtheorem*{Rem*}{Remark}
\newtheorem{?}[Th]{Problem}
\newcommand{\E}{\mathbb{E} }
\renewcommand{\P}{\mathbb{P}}
\newcommand{\Ind}[1]{\mathds{1}_{\{ #1\}}}
\newcommand{\R}{\mathbb{R}}
\newcommand{\N}{\mathbb{N}}
\renewcommand{\d}{\mathrm{d}}
\newcommand{\la}{\langle}\newcommand{\ra}{\rangle}
\newcommand{\eps}{\epsilon}
\newcommand{\hone}{{h_1}}
\newcommand{\htwo}{{h_2}}
\newcommand{\bF}{\overline{F}}
\renewcommand{\div}{\mathrm{div}}
\renewcommand{\H}{\mathsf{H}}
\newcommand{\tz}{\tilde z}
\newcommand{\ty}{\tilde y}
\newcommand{\tX}{\widetilde{X}}
\newcommand{\cH}{\mathcal{H}}
\newcommand{\tU}{\widetilde{U}}
\begin{document}

\title{Hamilton--Jacobi equations for  nonsymmetric matrix inference}
\author{Hong-Bin Chen}
\begin{abstract}
    We study the high-dimensional limit of the free energy associated with the inference problem of a rank-one nonsymmetric matrix. The matrix is expressed as the outer product of two vectors, not necessarily independent. The distributions of the two vectors are only assumed to have scaled bounded supports. We bound the difference between the free energy and the solution to a suitable Hamilton--Jacobi equation in terms of two much simpler quantities: concentration rate of this free energy, and the convergence rate of a simpler free energy in a decoupled system. To demonstrate the versatility of this approach, we apply our result to  the i.i.d.\ case and the spherical case. By plugging in estimates of the two simpler quantities, we identify the limits and obtain convergence rates.
\end{abstract}
\address{Courant Institute of Mathematical Sciences, New York University}
\email{hbchen@cims.nyu.edu}
\date{\today}

\maketitle

\section{Introduction}
Recovering a matrix from a noisy observation is a basic problem in statistical inference. Our setting is in the high-dimensional regime. For $n\in\N$, let $m=m(n)\in\N$  be a function of $n$ satisfying $\lim_{n\to\infty}m(n)=\infty$.   
Let $X=(X_1,X_2,\dots,X_m)\in\R^m$ and $Y=(Y_1,Y_2,\dots, Y_n)\in\R^n$ be two random vectors with joint law $P^{X,Y}_n$, which are \textit{not} necessarily independent. We assume  that
\begin{align}\label{eq:support_X_Y}
    |X|\leq \sqrt{m},\quad |Y|\leq \sqrt{n}, \quad \text{a.s.\ }\forall n.
\end{align}
Let $N=N(n)=\sqrt{mn}$ be the geometric mean of the sizes $m$ and $n$. 

\smallskip

The noisy observation is given by 
\begin{align}\label{eq:Z_observe}
    Z=\sqrt{\frac{2t}{N}}XY^\intercal + W
\end{align}
where $2t\geq 0$ is interpreted as the signal-to-noise ratio, and $W=(W_{ij})$ is an $m\times n $ matrix with independent standard Gaussian entries. The goal of inference is to recover information about the matrix $XY^\intercal$ from the observation $Z$. 

\smallskip

Of particular interest here is to study the law of $(X,Y)$ conditioned on observing $Z$. For any bounded measurable function $f:\R^m\times \R^n\to\R$, using the Bayes' rule, we can obtain the following formula: 
\begin{align*}
    \E\big[f(X,Y)\big|Z\big] = \frac{\int_{\R^m\times \R^n}f(x,y)e^{\mathring H_n(t,x,y)}P^{X,Y}_n(\d x,\d y)}{\int_{\R^m\times \R^n}e^{\mathring H_n(t,x,y)}P^{X,Y}_n(\d x,\d y)}
\end{align*}
where $\mathring H_n(t,x,y)= \sqrt{\frac{2t}{N}}x \cdot Wy+\frac{2t}{N}xy^\intercal\cdot XY^\intercal -\frac{t}{N}|xy^\intercal |^2$ is the Hamiltonian associated with this model. Here and throughout the paper, the dot product between two matrices or vectors of the same size denotes the entry-wise inner product, namely, $A\cdot B = \sum_{i,j}A_{ij}B_{ij}$. 

\smallskip

As in problems of statistical mechanics, many properties of the system can be understood by investigating the behavior of the associated free energy. In this work, we are concerned with the limit of the free energy given below, as $n\to \infty$,
\begin{align*}
    \mathring F_n(t) = \frac{1}{N}\E\log \int_{\R^m\times\R^n}e^{\mathring H_n(t,x,y)}P^{X,Y}_n(\d x,\d y).
\end{align*}

\smallskip

This work follows the approach set forth in \cite{mourrat2018hamiltonjacobi, mourrat2019hamilton, mourrat2019parisi, mourrat2020extending, mourrat2020nonconvex}. We seek to identify the limit of an enriched version of the free energy $\mathring F_n$ as a solution to a suitable Hamilton--Jacobi equation. Let us introduce the enriched Hamiltonian, for $(t,h)\in [0,\infty)\times [0,\infty)^2$,
\begin{align}\label{eq:H_n_expression}
\begin{split}
    H_n(t,h,x,y)&= \sqrt{\frac{2t}{N}}x \cdot Wy+\frac{2t}{N}xy^\intercal\cdot XY^\intercal -\frac{t}{N}|xy^\intercal |^2\\
    &+ \sqrt{2\hone }U\cdot x + 2\hone X\cdot x - \hone |x|^2\\
    & +\sqrt{2\htwo}V\cdot y +2\htwo Y\cdot y - \htwo |y|^2, 
\end{split}
\end{align}
where $U=(U_1,U_2,\dots, U_m)\in\R^m$ and $V=(V_1,V_2,\dots, V_n)\in\R^n$ are independent standard Gaussian vectors. The free energy associated with this enriched Hamiltonian is 
\begin{align}\label{eq:def_F_n}
    F_n(t,h) = \frac{1}{N}\log \int e^{H_n(t,h,x,y)}P^{X,Y}_n(\d x,\d y),
\end{align}
and its expectation is written as $\bF_n(t,h)= \E F_n(t,h)$. 

\smallskip

Note that $\bF_n(0,\cdot)$ can be viewed as the free energy of a decoupled system, which is a much simpler object to analyze. In addition to assumption \eqref{eq:support_X_Y}, we also assume that $\bF_n(0,\cdot)$ converges to a function $\psi$. Due to the absence of coupled interaction in $\bF_n(0,\cdot)$, this assumption should, in general, be relatively easy to check. Moreover, explicit estimates on the speed of convergence should also be attainable. 

\smallskip

Under these assumptions, we will show that $\bF_n$ satisfies an approximate Hamilton--Jacobi equation, whose limiting equation is 
\begin{align}\label{eq:HJ_intro}
    \partial_t f - (\partial_{\hone} f)(\partial_{\htwo}f) = 0, \quad \text{in $[0,\infty)\times [0,\infty)^2$},
\end{align}
with initial condition $f(0,\cdot) =\psi$.
We adopt the notion of weak solutions advocated in \cite{mourrat2019hamilton}, for this notion allows a simpler way to study the convergence of $\bF_n$. 
Let us rewrite the nonlinear term in \eqref{eq:HJ_intro} as $(\partial_\hone f)(\partial_\htwo f)=\H(\nabla f)$ where $\H:\R^2\to\R$ is given by $\H(p)=p_1p_2$. Different from \cite{mourrat2018hamiltonjacobi} and \cite{mourrat2019hamilton} which studied the symmetric matrix inference problem, in the nonsymmetric setting, the function $\H$ is not convex. As in \cite{bardi1984hopf} and \cite{lions1986hopf}, the existence of solution
is ensured and can be expressed by a variational formula if the initial condition is convex. For the uniqueness, a partial convexity condition (see \eqref{item:4} of Definition~\ref{def:weak_sol}) and the nonnegativity of all entries in the Hessian of $\H$ are sufficient.

\smallskip

The inference problem of symmetric matrices, in the rank-one case or more general, has been extensively studied. We refer to \cite{lelarge2019fundamental} for a description of these results. For the nonsymmetric matrix inference as is concerned here, the problem has been studied in \cite{miolane2017fundamental} for the case where $X$ and $Y$ have i.i.d.\ entries. In particular, a variational formula for the limit of the free energy is obtained. Most recently, employing the adaptive interpolation method (introduced in \cite{barbier2019adaptive}), \cite{luneau2020high} established results in the case where $X$ and $Y$ are uniformly distributed on properly scaled spheres.  More generally, results have been extended to rank-one tensor inference in \cite{barbier2017layered} and \cite{kadmon2018statistical}. The inference of second-order matrix tensor product is recently studied in \cite{reeves2020information} using a more general adaptive interpolation method.

\smallskip

Our main contribution is to provide a different approach, which is conceptually simpler in some aspect.
The main result bounds the local $L^\infty_tL^1_h$ norm of $\bF_n - f$ in terms of two quantities: 1) the concentration rate of $F_n$ towards $\bF_n$ and 2) the convergence rate of $\bF_n(0,\cdot)$. These two quantities are much simpler to study, and there is a plethora of tools and techniques to obtain good estimates. Convergence in the local uniform topology can also be obtained as discussed in Remark~\ref{Rem:infinity_norm}. As an application of the main result, we estimate the two quantities for the i.i.d.\ case and the spherical case. We recovered known results on the limit of the free energy for these two cases simultaneously. In addition, convergence rates are also obtained.

\smallskip

\smallskip

The rest of the paper is organized as follows. In Section~\ref{section:setting}, we state the main results. We also include results for the i.i.d.\ case and spherical case. Then, we show $\bF_n$ satisfies an approximate Hamilton--Jacobi equation, and list a few basic estimates in Section~\ref{section:approx_hj_eqn}. In Section~\ref{section:HJ_eqn}, we define the notion of weak solutions, prove the uniqueness of solutions, and describe conditions for the existence of solutions. Section~\ref{section:cvg_fre_energy} contains the proof of the main results. Lastly, in Section~\ref{section:application}, we collect estimates needed to derive convergence results for the two special cases. In addition, we briefly describe possible modifications of the current approach to study the sparse model in Section~\ref{section:sparse}.

\smallskip

Throughout the paper, we write $\R_+= [0,\infty)$. The symbol $C$ denotes a positive absolute constant which may vary from instance to instance.

\subsection*{Acknowledgements}
I am grateful to Jean-Christophe Mourrat for introducing me to this subject and for many helpful insights. I would like to thank Jiaming Xia for helpful comments.

\section{Settings and main results}\label{section:setting}

\subsection{General setting}

We assume \eqref{eq:support_X_Y} and the existence of $\alpha>0$ such that
\begin{align}\label{eq:m/n_to_alpha}
    \lim_{n\to\infty}\frac{m(n)}{n}=\alpha>0.
\end{align}
Let us define the following quantities, for $M>0$, $n\in\N$ and $\psi:\R^2_+\to\R$,
\begin{align}
    K_{M,n}&= \bigg(\E\sup_{(t,h)\in[0,M]^3}\big|F_n-\bF_n|^2\bigg)^\frac{1}{2},\label{eq:def_K}\\
    L_{\psi,M,n}&= \sup_{h\in[0,M]^2}\big|\bF_n(0,h) - \psi(h)\big|\label{eq:def_L}.
\end{align}

\begin{Th}\label{Prop:general_cvg_bF}
Suppose that $X$ and $Y$ are independent for all $n$, and that there is a function $\psi:\R^2_+\to \R$ such that 
\begin{align}\label{eq:assumption_gen_cvg}
\bF_n(0,\cdot)\to \psi,\quad  \text{ pointwise as $n\to \infty$}.     
\end{align}
Then
\begin{enumerate}
    \item there is a unique weak solution $f$ to the Hamilton--Jacobi equation
\begin{align}\label{eq:general_2d_HJ}
\begin{cases}
\partial_t f - \big(\partial_\hone f\big)\big( \partial_\htwo f\big) = 0, &\quad \text{in } \R_+ \times \R_+^2,\\
f(0,\cdot) =\psi, &\quad \text{in } \R^2_+;
\end{cases}
\end{align}
and $f$ admits a variational representation known as the Hopf formula 
\begin{align}\label{eq:Hopf_2d}
    f(t,h) = \sup_{z\in\R^2_+}\inf_{y\in\R^2_+}\big\{z\cdot (h-y)+\psi(y) + tz_1z_2 \big\};
\end{align}

\item there is a constant $C>0$ such that the following holds for all $M\geq 1$ and all $n\in\N$:
\begin{align*}
    \sup_{t\in [0,M]}\int_{[0,M]^2}\big|\bF_n(t,h)-f(t,h)\big|\d h\leq CM^2\Big(L_{\psi,CM,n}+n^{-1}+ (K_{CM,n})^\frac{2}{3}+K_{CM,n}\Big).
\end{align*}
\end{enumerate}
\end{Th}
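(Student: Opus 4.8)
The plan is to realize $\bF_n$ as an approximate subsolution and supersolution of the limiting Hamilton--Jacobi equation, then use a quantitative comparison principle to control $\bF_n - f$. Concretely, from Section~\ref{section:approx_hj_eqn} (which I am free to invoke) we know that $\bF_n$ satisfies an approximate version of \eqref{eq:general_2d_HJ}: something of the form $|\partial_t \bF_n - (\partial_\hone \bF_n)(\partial_\htwo \bF_n)| \leq \mathrm{err}_n(t,h)$ in a weak sense, where the error comes from the Gaussian integration-by-parts (Nishimori/cavity) computation plus a concentration term measuring the gap between $F_n$ and $\bF_n$. The first step is to package that error: the Nishimori-type identities give that the discrepancy in the PDE is governed by the fluctuation of the overlap, which in turn is bounded by the concentration quantity $K_{M,n}$ (typically after a $\partial_h$-derivative estimate and an interpolation/Cauchy--Schwarz argument, which is where the exponents $2/3$ and $1$ in $K_{CM,n}$ enter — one gets a bound like $K^{2/3}+K$ after optimizing a cutoff), plus a lower-order $n^{-1}$ term from finite-size corrections.

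The second step is the comparison estimate for Hamilton--Jacobi equations with $\H(p)=p_1p_2$. Since part (1) already gives existence and uniqueness of the weak solution $f$ together with the Hopf formula \eqref{eq:Hopf_2d}, I would use the quantitative stability built into the weak-solution framework of \cite{mourrat2019hamilton}: if $g$ is an approximate solution with PDE-defect bounded by $\varepsilon$ in the appropriate norm and with initial data within $L_{\psi,M,n}$ of $\psi$, then $\sup_{t\le M}\|g(t,\cdot)-f(t,\cdot)\|_{L^1([0,M]^2)}$ is bounded by a constant times $M^2$ times the sum of the initial-data error and the PDE-defect. Here one must be a little careful because $\H$ is not convex in the nonsymmetric setting; the partial convexity condition \eqref{item:4} together with the nonnegativity of the Hessian entries of $\H$ (both already singled out in the introduction) is exactly what makes the comparison principle go through, and the monotonicity of $\bF_n$ in $h$ (a basic estimate, again from Section~\ref{section:approx_hj_eqn}) lets one stay in the regime where this applies. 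Applying this with $g=\bF_n$ and $\varepsilon \lesssim L_{\psi,CM,n}+n^{-1}+K_{CM,n}^{2/3}+K_{CM,n}$ yields the stated inequality, after absorbing the enlargement of the domain (one needs $\bF_n$ and its derivatives controlled on a slightly larger box $[0,CM]^3$, which is why $CM$ rather than $M$ appears inside the error quantities — finite speed of propagation / the Hopf formula's dependence on a bounded region of initial data).

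I would carry this out in the order: (i) record the approximate HJ equation and the Lipschitz/monotonicity bounds on $\bF_n$; (ii) estimate the PDE-defect in terms of $K_{CM,n}$ and $n^{-1}$, doing the interpolation that produces the $K^{2/3}$ term; (iii) invoke the quantitative comparison principle from the weak-solution theory, checking that the partial-convexity hypothesis and Hessian-sign hypothesis are met so it applies to the nonconvex $\H(p)=p_1p_2$; (iv) combine, tracking the constants and the $M^2$ and $CM$ dependences.

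The main obstacle I anticipate is step (ii)–(iii) interface: getting an \emph{honest} quantitative comparison estimate for the nonconvex Hamiltonian $\H(p)=p_1p_2$ that is robust to the $L^1_h$ defect, rather than an $L^\infty$ one. The standard convex theory gives clean $L^\infty$ comparison via the Hopf--Lax formula, but here the relevant representation is the Hopf formula \eqref{eq:Hopf_2d} (convex initial data, possibly nonconvex $\H$), and one has to push the stability through that formula while only assuming the weak-solution inequalities and the partial convexity of $\bF_n$ in $h$. Controlling how the $L^1_h$ error propagates in $t$ — essentially a Grönwall-type argument in the metric induced by the Hopf formula, exploiting $\partial^2_{p_ip_j}\H \ge 0$ to get the right sign — is the technical heart of the argument; everything else is Gaussian calculus and bookkeeping of constants.
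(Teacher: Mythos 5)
Your plan for part (2) matches the paper's proof in its essentials: derive an approximate Hamilton--Jacobi inequality for $\bF_n$ (Lemma~\ref{lemma:HJ}), then run a quantitative stability argument (Theorem~\ref{thm:most_general_cvg}) in which the sign conditions $\partial^2_{ij}\H\ge 0$, the monotonicity of $\bF_n$ and $f$ in $h$, and the partial-convexity condition \eqref{item:4} supply the right signs, with the $K^{2/3}$ term coming from optimizing a boundary-layer cutoff. Your identification of $\partial^2_{p_ip_j}\H\ge 0$ and partial convexity as the technical heart is exactly right.

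Two gaps deserve flagging. First, you treat part (1) as given, but this is where the independence of $X$ and $Y$ actually enters the proof: it gives $\bF_n(0,h)=\bF_n(0,\hone,0)+\bF_n(0,0,\htwo)$, so $\psi$ is separable, and the estimates \eqref{eq:grad_bF<C} and \eqref{eq:1st_2nd_d_bF_lower_bound} show that each summand is Lipschitz, convex and nondecreasing --- exactly the hypotheses of Proposition~\ref{Prop:hopf_formula}, which is what produces the Hopf representation and (with Proposition~\ref{Prop:uniq_sol_HJ}) the unique weak solution $f$. Without noticing this, you have no $f$ to compare against, and you have not used the independence hypothesis at all. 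Second, you suggest obtaining the stability estimate ``through the Hopf formula,'' as a Gr\"onwall argument in a Hopf-induced metric; the paper does not do this, and for nonconvex $\H$ this would be a genuinely harder route. Instead the stability is a direct transport-equation argument: write $\partial_t w_n = a_n\cdot\nabla w_n + r_n$ with $w_n=\bF_n-f$ and $a_n=(\partial_\htwo f,\partial_\hone\bF_n)$, set $J_\delta(t)=\int_{D_t}\phi_\delta(w_n)$ over a shrinking region $D_t$, integrate by parts, and use $\div a_n^\eps\ge 0$ (from partial convexity) and $a_n^\eps\in\R^2_+$ on the flat parts of $\partial D_t$ (from monotonicity) to get $\tfrac{\d}{\d t}J_\delta\le\int_{D_t}|r_n|$. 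The Hopf formula is used only to exhibit a weak solution; the sign condition $\partial^2_{ij}\H\ge 0$ enters through the divergence of the mollified coefficient, not through the variational formula.
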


This theorem is a consequence of Theorem~\ref{thm:most_general_cvg} in a more general setting.

\begin{Rem}\label{Rem:X_Y_not_independent}
When $X$ and $Y$ are not independent, a similar result can still be obtained. To guarantee the existence of a weak solution, we need to further assume, for each $M\geq1$,
\begin{align*}
    \lim_{n\to\infty}L_{\psi, M,n}=0,\quad \lim_{n\to\infty}K_{M,n}=0.
\end{align*}
This result is recorded in Proposition~\ref{prop:cvg_for_non_independent}. 

\end{Rem}

\begin{Rem}\label{Rem:infinity_norm}
Assuming that the right hand side of the inequality in the second part of Theorem~\ref{Prop:general_cvg_bF} converges to $0$ as $n\to\infty$, we can also obtain local uniform convergence by utilizing the fact $f$ is Lipschitz and $\bF_n $ is Lipschitz uniformly in $n$ (see Definition \ref{def:weak_sol} and  \eqref{eq:grad_bF<C}). We briefly sketch the argument. Let $\xi:\R^2\to\R$ be a smooth radial bump function supported on the unit disk and satisfy $0\leq \xi \leq 1$ and $\int \xi =1$. For $\eps \in(0,2)$, set $\xi_\eps(x) = \eps^{-2}\xi(\eps^{-1}x)$. Notice that for any Lipschitz $g:\R^2\to \R$ and $A\subset \R^2$, we have 
\begin{align*}
    \|g\|_{L^\infty(A)}\leq \|g*\xi_\eps\|_{L^\infty(A)}+\|g-g*\xi_\eps\|_{L^\infty(A)}\leq \eps^{-2}\|g\|_{L^1(A_\eps)}+\eps \|\nabla g\|_{L^\infty(\R^2)},
\end{align*}
where $A_\eps$ is the $\eps$-neighbourhood of $A$. Extend $\bF_n(t,\cdot)$, $ f(t,\cdot)$ symmetrically to $\R^2$. For any compact $B\subset \R^2$, we choose $M$ large so that $B_1\subset [-M,M]^2$. Fix any $t\in[0,M]$ and set $g_n=\bF_n(t,\cdot)-f(t,\cdot)$. By our assumption at the beginning of this remark, we have $\lim_{n\to\infty}\|g_n\|_{L^1([-M,M]^2)}=0$. Then for large $n$, we have $\eps=(\|g_n\|_{L^1([-M,M]^2)}\|\nabla g_n\|_\infty)^\frac{1}{3}<1$. Plug this $\eps$ to the above display to see
\begin{align*}
    \|g_n\|_{L^\infty(B)}\leq C\|g_n\|_{L^1([-M,M]^2)}^\frac{1}{3}\|\nabla g_n\|_\infty^\frac{2}{3} \leq C\|g_n\|_{L^1([-M,M]^2)}^\frac{1}{3}.
\end{align*}
Here the last equality is due to the Lipschitzness of $f$ and $\bF_n$ (uniformly in $n$).

\end{Rem}

\subsection{Special cases}

We apply Theorem~\ref{Prop:general_cvg_bF} to the i.i.d.\ case and the spherical case. In Section \ref{section:application}, we prove these results by identifying $\psi$ in \eqref{eq:def_general_psi} and estimating $K_{M,n}$ and $L_{\psi,M,n}$ in these two cases. To quantify the rate of convergence in \eqref{eq:m/n_to_alpha}, we set
\begin{align}\label{eq:def_beta(n)}
    \beta(n)=\Big|\frac{m(n)}{n}-\alpha\Big|.
\end{align}

\subsubsection{The i.i.d.\ case}\label{section:iid}
Let $P^X_1$ and $P^Y_1$ be supported on $[-1,1]$. Recall that $m=m(n)$ is a function of $n$. For $ n \in \N$, we set
\begin{align*}
    P^{X,Y}_n = \big(P^X_1)^{\otimes m}\otimes\big(P^Y_1)^{\otimes n},
\end{align*}
where $\otimes$ denotes the product of measures.
Hence, $X$ and $Y$ have i.i.d.\ entries, distributed according to $P^X_1$ and $P^Y_1$, respectively.

Here is the result for the i.i.d.\ case. 

\begin{Prop}[Convergence for the i.i.d.\ case] \label{Prop:cvg_iid}
There is a constant $C>0$, such that the following holds for all $M\geq 1$ and $n\in\N$:
\begin{align*}
    \sup_{t\in [0,M]}\int_{[0,M]^2}\big|\bF_n(t,h)-f(t,h)\big|\d h \leq CM^3\Big(\beta(n)+\big(n^{-1}\log n\big)^\frac{1}{3}\Big).
\end{align*}
Here $f$ is the unique weak solution to \eqref{eq:general_2d_HJ} with $\psi$ given by
\begin{align}\label{eq:psi_iid}
    \psi(h) = \Big(\alpha m(1)\Big)^\frac{1}{2}\bF_1(0,\hone,0) + \Big(\alpha^{-1}m(1)\Big)^\frac{1}{2}\bF_1(0,0,\htwo).
\end{align}
\end{Prop}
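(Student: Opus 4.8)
The plan is to deduce Proposition~\ref{Prop:cvg_iid} from Theorem~\ref{Prop:general_cvg_bF} by carrying out three tasks: (i) identify the limit $\psi$ of $\bF_n(0,\cdot)$ and show it has the product-of-one-dimensional-pieces form claimed in \eqref{eq:psi_iid}; (ii) bound $L_{\psi,M,n}$ by something of order $M^2\beta(n)$ plus a lower-order term; and (iii) bound the concentration quantity $K_{M,n}$ by $C M (n^{-1}\log n)^{1/2}$ or similar, so that $(K_{CM,n})^{2/3}$ produces the $(n^{-1}\log n)^{1/3}$ term. Once these three estimates are in hand, plugging into part (2) of Theorem~\ref{Prop:general_cvg_bF} and collecting powers of $M$ gives the stated bound.

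For task (i)--(ii): since $P^{X,Y}_n$ is a product measure, the enriched Hamiltonian at $t=0$ splits as a sum of $m$ i.i.d.\ scalar Hamiltonians in the $x$-variables (each at "temperature" $\hone$) and $n$ i.i.d.\ scalar Hamiltonians in the $y$-variables (each at "temperature" $\htwo$), so
\[
\bF_n(0,h) = \frac{m(n)}{N}\,\E\log\!\int e^{\sqrt{2\hone}Ux + 2\hone X x - \hone x^2}P^X_1(\d x) + \frac{n}{N}\,\E\log\!\int e^{\sqrt{2\htwo}Vy+2\htwo Yy-\htwo y^2}P^Y_1(\d y).
\]
Writing the one-variable free energies as $\bF_1(0,\hone,0)$ (up to the normalization built into $\bF_1$) and $\bF_1(0,0,\htwo)$, and using $N=\sqrt{mn}$ so that $m/N=\sqrt{m/n}$ and $n/N=\sqrt{n/m}$, the coefficients are $\sqrt{m(n)/n}$ and $\sqrt{n/m(n)}$. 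Replacing $m(n)/n$ by $\alpha$ introduces an error controlled by $\beta(n)$; one must also check each scalar free energy is Lipschitz in $h$ on $[0,M]$ with a constant that is $O(M)$ (using the support bound $|X|,|Y|\le 1$), which makes the substitution error on $[0,M]^2$ of size $O(M^2\beta(n))$. This gives $\psi$ as in \eqref{eq:psi_iid} and the bound $L_{\psi,M,n}\le C M^2\beta(n)$.

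For task (iii), the concentration of $F_n$ around $\bF_n$: I would use Gaussian concentration (Gaussian Poincaré or the bounded-differences/Gaussian-Lipschitz inequality) in the disorder variables $W,U,V$, combined with a union bound or a chaining/net argument over $(t,h)\in[0,M]^3$ to handle the supremum inside $K_{M,n}$. The gradient of $F_n$ in the Gaussian variables is controlled using $|X|\le\sqrt m$, $|Y|\le\sqrt n$, $N=\sqrt{mn}$, which yields fluctuations of order $N^{-1/2}$ up to factors of $M$; the supremum over the cube costs at most a $\sqrt{\log n}$ factor after discretizing at scale $1/n$ (the Lipschitz-in-$(t,h)$ bound \eqref{eq:grad_bF<C} or its pointwise analogue makes the discretization error negligible, contributing the $n^{-1}$ term already present in Theorem~\ref{Prop:general_cvg_bF}). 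This gives $K_{CM,n}\le C M (n^{-1}\log n)^{1/2}$, hence $(K_{CM,n})^{2/3}\le C M^{2/3}(n^{-1}\log n)^{1/3}$, which dominates the $K_{CM,n}$ and $n^{-1}$ terms.

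The main obstacle is task (iii): getting the supremum over the three-dimensional parameter cube $[0,CM]^3$ inside the $L^2$ norm with only a $(\log n)^{1/2}$ loss, uniformly in $M$ with explicit $M$-dependence. This requires a careful combination of a pointwise Gaussian concentration estimate with modulus-of-continuity control in $(t,h)$ and a discretization argument; keeping track of how the support bounds \eqref{eq:support_X_Y} feed into the Lipschitz constants, and making sure the net can be taken at polynomial-in-$n$ resolution so the union bound only costs $\log n$, is where the real work lies. The identification of $\psi$ and the bound on $L_{\psi,M,n}$ are comparatively routine given the product structure.
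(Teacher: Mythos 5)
Your approach is the same as the paper's: identify $\psi$ and bound $L_{\psi,M,n}$ using the product structure (which is the content of Lemma~\ref{lemma:cvg_initial_iid}), bound $K_{M,n}$ via Gaussian concentration in the disorder variables combined with concentration over $X,Y$ and an $\eps$-net argument at scale $n^{-1}$ (Lemma~\ref{lemma:concentration}), then plug into part~(2) of Theorem~\ref{Prop:general_cvg_bF}.

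However, there is a quantitative slip in your task (ii) that would break the final power of $M$. You assert that each scalar free energy is Lipschitz with constant $O(M)$ and conclude $L_{\psi,M,n}\lesssim M^2\beta(n)$. In fact the scalar free energies are Lipschitz with an \emph{absolute} constant $O(1)$ (this follows from the support bound $|X_1|,|Y_1|\le 1$, cf.\ \eqref{eq:grad_bF<C}), and they vanish at $h=0$, so the correct statement is $|\bF_1(0,\hone,0)|\le C\hone\le CM$ on $[0,M]$; multiplying by the prefactor error $|\sqrt{m(n)/n}-\sqrt{\alpha}|\le C\beta(n)$ gives $L_{\psi,M,n}\le CM\beta(n)$. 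With your $M^2\beta(n)$ bound, Theorem~\ref{Prop:general_cvg_bF} would yield $CM^4\beta(n)$, not the claimed $CM^3\beta(n)$. Your task (iii) bound $K_{CM,n}\lesssim M(n^{-1}\log n)^{1/2}$ and the resulting $(K_{CM,n})^{2/3}\lesssim M^{2/3}(n^{-1}\log n)^{1/3}$ are consistent with the paper's $K_{M,n}\le Cn^{-1/2}(M+\sqrt{\log n})$ and do give the stated $M^3(n^{-1}\log n)^{1/3}$ term.
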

In \eqref{eq:psi_iid}, $m(1)$ is the evaluation of $m=m(n)$ at $n=1$.

\subsubsection{The spherical case}\label{section:spherical}
For $k\in\N$, let $\mathscr{U}_k$ be the uniform measure on the centered sphere with radius $\sqrt{ k}$, denoted as $\sqrt{k}\mathbb{S}^{k-1}$. Consider the joint distribution of $X$ and $Y$ given by
\begin{align*}
    P^{X,Y}_n(\d x, \d y) = \mathscr{U}_m(\d x)\otimes \mathscr{U}_n(\d y).
\end{align*}
In particular, $X$ and $Y$ are independent. The result in this case is stated below.

\begin{Prop}[Convergence for the spherical case] \label{Prop:cvg_spherical}
There is a constant $C>0$, such that the following holds for all $M\geq 1$ and $n\in\N$:
\begin{align*}
    \sup_{t\in [0,M]}\int_{[0,M]^2}\big|\bF_n(t,h)-f(t,h)\big|\d h \leq CM^3\Big(\beta(n)+\big(n^{-1}\log n\big)^\frac{1}{3}\Big).
\end{align*}
Here $f$ is the unique weak solution to \eqref{eq:general_2d_HJ} with $\psi$ given by
\begin{align}\label{eq:psi_spherical}
    \psi(h) = \alpha \Big(\hone -\frac{\log(1+2\hone)}{2}\Big) + \alpha^{-1}\Big(\htwo -\frac{\log(1+2\htwo)}{2}\Big).
\end{align}
\end{Prop}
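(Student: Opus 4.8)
\emph{Proof strategy.}
The plan is to deduce the proposition from Theorem~\ref{Prop:general_cvg_bF}. Since $P^{X,Y}_n=\mathscr{U}_m\otimes\mathscr{U}_n$ makes $X$ and $Y$ independent, it suffices to carry out two tasks: (i) identify the pointwise limit $\psi$ of $\bF_n(0,\cdot)$, which both verifies hypothesis~\eqref{eq:assumption_gen_cvg} and, via part~(1) of the theorem, determines the weak solution $f$; and (ii) bound the two error quantities $L_{\psi,M,n}$ and $K_{M,n}$ from \eqref{eq:def_L} and \eqref{eq:def_K}. Inserting these bounds into part~(2) of Theorem~\ref{Prop:general_cvg_bF} then yields the claim.

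\emph{Identifying $\psi$ and estimating $L_{\psi,M,n}$.}
Because $X$ and $Y$ are independent and $H_n(0,h,x,y)$ decouples into an $x$-part and a $y$-part, $\bF_n(0,h)=A_n(\hone)+B_n(\htwo)$ splits into two independent decoupled free energies; I analyze $A_n$, the $y$-part being identical with $m$ replaced by $n$. On the sphere one has $|x|^2=m$, so the term $-\hone|x|^2$ contributes a deterministic $-\hone m/N$, and by rotational invariance of $\mathscr{U}_m$ the remaining integral $\int e^{w\cdot x}\,\mathscr{U}_m(\d x)$ with $w=\sqrt{2\hone}\,U+2\hone X$ depends only on $|w|$ and equals $\mathbb{E}_{\omega}e^{\sqrt{m}\,|w|\,\omega_1}$, where $\omega$ is uniform on $\mathbb{S}^{m-1}$. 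The scalar $|w|^2/m=2\hone|U|^2/m+4\hone\sqrt{2\hone}\,(U\cdot X)/m+4\hone^2$ concentrates around $\rho^2:=2\hone(1+2\hone)$ with fluctuations of order $m^{-1/2}$. Combining this concentration with the classical uniform (Laplace, or modified-Bessel) asymptotics
\[
\frac1m\log\mathbb{E}_{\omega}e^{m\rho\,\omega_1}=\sup_{u\in(-1,1)}\Big\{\rho u+\tfrac12\log(1-u^2)\Big\}+O\big(m^{-1}\log m\big),
\]
and evaluating the supremum explicitly --- the substitution $u=\tanh\theta$ turns it into $\rho=\sinh\theta\cosh\theta$, i.e.\ $\sinh^2\theta=2\hone$, with optimal value $2\hone-\tfrac12\log(1+2\hone)$ --- gives $\frac1m\mathbb{E}\log\int e^{w\cdot x}\,\mathscr{U}_m(\d x)=2\hone-\tfrac12\log(1+2\hone)+O(m^{-1}\log m)$ uniformly for $\hone$ in a compact. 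Multiplying by the aspect-ratio prefactor $m/N$ (which converges and differs from its limit by $O(\beta(n))$), subtracting $\hone m/N$, and treating $B_n$ symmetrically, identifies the limit with the function $\psi$ of \eqref{eq:psi_spherical} and yields $L_{\psi,M,n}\le CM\big(\beta(n)+n^{-1}\log n\big)$.

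\emph{Estimating $K_{M,n}$ and concluding.}
Regard $F_n(t,h)$ as a function of the Gaussian disorder $(W,U,V)$; its partial derivatives are Gibbs averages, e.g.\ $\partial_{W_{ij}}F_n=N^{-1}\sqrt{2t/N}\,\langle x_iy_j\rangle$, $\partial_{U_i}F_n=N^{-1}\sqrt{2\hone}\,\langle x_i\rangle$, $\partial_{V_j}F_n=N^{-1}\sqrt{2\htwo}\,\langle y_j\rangle$, so the support bounds \eqref{eq:support_X_Y} give $|\nabla F_n|^2\le CM/N$ on $[0,M]^3$. Gaussian concentration (Borell--TIS, or the Gaussian Poincar\'e inequality) then yields $\mathbb{E}|F_n-\bF_n|^2\le CM/N$ pointwise, and upgrading this to the supremum over $[0,M]^3$ by a union bound over an $n^{-1}$-mesh --- controlled using the a priori Lipschitz-in-$(t,h)$ estimates for $F_n$ recorded in Section~\ref{section:approx_hj_eqn}, with the mild caveat that $F_n$ is only $\tfrac12$-H\"older in $t$ near $t=0$ --- gives $K_{M,n}\le CM(n^{-1}\log n)^{1/2}$. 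Feeding $L_{\psi,CM,n}$ and $K_{CM,n}$ into part~(2) of Theorem~\ref{Prop:general_cvg_bF}, and using $n^{-1}\le(n^{-1}\log n)^{1/3}$ together with $K_{CM,n}^{2/3}\le CM^{2/3}(n^{-1}\log n)^{1/3}$, produces the asserted bound $CM^3\big(\beta(n)+(n^{-1}\log n)^{1/3}\big)$.

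\emph{Main obstacle.}
The delicate point is task~(i)/(ii): the sharp, uniform-in-$\hone$ asymptotic evaluation of the decoupled spherical free energy $A_n$. This requires combining the quantitative Laplace/Bessel asymptotics for the spherical integral with a bound on the contribution of the large-deviation tail of $|w|^2/m$ inside $\mathbb{E}\log$, and keeping all error terms uniform as $\hone$ ranges over $[0,M]$. By contrast, the concentration estimate for $K_{M,n}$ is a routine combination of Gaussian concentration and a standard net argument.
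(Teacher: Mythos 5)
Your overall strategy --- reduce to Theorem~\ref{Prop:general_cvg_bF}, identify $\psi$, and bound $L_{\psi,M,n}$ and $K_{M,n}$ --- is the same as the paper's. Two remarks, one on a methodological difference and one on a genuine gap.

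For $L_{\psi,M,n}$, you take a genuinely different route from the paper. You compute the decoupled spherical free energy directly, using rotational invariance and Laplace (modified Bessel) asymptotics for $\mathbb{E}_\omega e^{m\rho\omega_1}$, together with concentration of $|w|^2/m$ around $2\hone(1+2\hone)$. The paper instead Gaussian-interpolates between $\mathscr{U}_m$ and the standard Gaussian $\tX$ (using that $\sqrt m\,\tX/|\tX|\sim\mathscr{U}_m$) and controls $f'(s)$ via $\E(|\tX|-\sqrt m)^2$, obtaining the rate $O(n^{-1/2})$; your Laplace computation, carried out carefully, is at least as sharp and conceptually more self-contained, though it requires more work to make the error terms uniform in $\hone$, as you yourself flag. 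Either bound is dominated by the $K^{2/3}$ term in the end, so this choice does not affect the final rate.

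The real gap is in the concentration estimate $K_{M,n}$. You regard $F_n$ as a function of the Gaussian disorder $(W,U,V)$ only, compute $|\nabla_{(W,U,V)}F_n|^2\le CM/N$, and invoke Borell--TIS to conclude $\E|F_n-\bF_n|^2\le CM/N$. But $F_n$ also depends on the signal $(X,Y)$, and $\bF_n=\E_{X,Y,G}F_n$ averages over $(X,Y)$ as well. Gaussian concentration in $(W,U,V)$ only controls $F_n-\E_G F_n$; it gives no information about the fluctuations of $\E_G F_n$ around $\bF_n$, which are generically of the same order. The paper handles this by a telescoping/H\"older decomposition $F_n-\bF_n=(F_n-\E_X F_n)+(\E_X F_n-\E_{X,Y}F_n)+(\E_{X,Y}F_n-\bF_n)$ and, because $X$ and $Y$ are uniform on spheres rather than Gaussian, it must use Levy's concentration inequality on the sphere (Lemma on $\mathbb{S}^{n-1}$ cited from \cite{meckes2019random}), with the Lipschitz bound $|\nabla_X F_n|\le CMn^{-1/2}$ feeding into the sub-Gaussian tail. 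Without this step your estimate of $K_{M,n}$ does not follow, and your pointwise claim $\E|F_n-\bF_n|^2\le CM/N$ is unproved. The $\varepsilon$-net upgrade you sketch afterwards is fine, but it presupposes this missing pointwise bound.
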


\section{Approximate Hamilton--Jacobi equations}\label{section:approx_hj_eqn}

In this section, we show that $\bF_n$ satisfies an approximate Hamilton--Jacobi equation as stated in Lemma~\ref{lemma:HJ}, under the notation and assumptions introduced in the previous two sections.

\begin{Lemma}\label{lemma:HJ}
For each $n\in\N$, the function $\bF_n:\R_+\times\R_+^2 \to \R$ satisfies
\begin{align*}
    \Big|\partial_t \bF_n - \big(\partial_\hone \bF_n\big)\big( \partial_\htwo \bF_n\big) \Big| \leq  \frac{1}{2N} \Delta\bF_n+\frac{1}{2}\E\big|\nabla (F_n- \bF_n)\big|^2 .
\end{align*}
\end{Lemma}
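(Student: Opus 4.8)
The plan is to derive the identity for $\partial_t \bF_n$ and the gradient $\nabla \bF_n$ by direct differentiation of the formula \eqref{eq:def_F_n} for $F_n$, take expectations, and use Gaussian integration by parts to handle the terms involving the Gaussian vectors $W$, $U$, $V$. Throughout I write $\langle\cdot\rangle$ for the Gibbs average with respect to the measure proportional to $e^{H_n(t,h,x,y)}P^{X,Y}_n(\d x,\d y)$, so that for a test observable $g(x,y)$ we have $\langle g\rangle = \int g\, e^{H_n} \d P^{X,Y}_n / \int e^{H_n}\d P^{X,Y}_n$.

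First I would compute the three derivatives of $F_n$. Differentiating $N F_n = \log\int e^{H_n}\d P^{X,Y}_n$ gives $\partial_t F_n = \tfrac1N\langle \partial_t H_n\rangle$, $\partial_{\hone} F_n = \tfrac1N\langle \partial_{\hone} H_n\rangle$, $\partial_{\htwo} F_n = \tfrac1N\langle \partial_{\htwo} H_n\rangle$, with, from \eqref{eq:H_n_expression},
\begin{align*}
\partial_t H_n &= \tfrac{1}{\sqrt{2tN}}\, x\cdot Wy + \tfrac{2}{N} xy^\intercal\cdot XY^\intercal - \tfrac1N|xy^\intercal|^2,\\
\partial_{\hone} H_n &= \tfrac{1}{\sqrt{2\hone}}\, U\cdot x + 2X\cdot x - |x|^2,\qquad \partial_{\htwo} H_n = \tfrac{1}{\sqrt{2\htwo}}\, V\cdot y + 2Y\cdot y - |y|^2.
\end{align*}
Now I take expectations and apply Gaussian integration by parts in each Gaussian variable to remove the singular prefactors. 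For the $W$-term, $\E\big[\tfrac{1}{\sqrt{2tN}} x\cdot Wy\big]$ inside the Gibbs average produces, after integrating by parts in $W_{ij}$ and using $\partial_{W_{ij}}\langle x_iy_j\rangle = \sqrt{\tfrac{2t}{N}}\big(\langle x_i^2y_j^2\rangle - \langle x_iy_j\rangle^2\big)$, a term equal to $\tfrac1N\big(\langle |xy^\intercal|^2\rangle - \langle xy^\intercal\rangle\cdot\langle x'y'^\intercal\rangle\big)$ where $(x',y')$ is an independent replica; similarly the $U$- and $V$-terms yield $\langle|x|^2\rangle - \langle x\rangle\cdot\langle x'\rangle$ and $\langle|y|^2\rangle - \langle y\rangle\cdot\langle y'\rangle$ respectively (up to the explicit $\tfrac2N$, $2$, $\tfrac1N$, $1$ constants bookkept above). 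After these manipulations the singular terms cancel against the deterministic $-\tfrac1N|xy^\intercal|^2$, $-|x|^2$, $-|y|^2$ pieces and one is left with expressions purely in the overlaps: $N\,\E\partial_t \bF_n = \tfrac1N\,\E\langle xy^\intercal\rangle\cdot\langle x'y'^\intercal\rangle + \tfrac1N\E\big[ xy^\intercal\cdot XY^\intercal\big]$-type quantities, and analogously $N\E\partial_{\hone}\bF_n$, $N\E\partial_{\htwo}\bF_n$ in terms of $\langle x\rangle\cdot X$, $\langle x\rangle\cdot\langle x'\rangle$ etc. The Nishimori identity (valid since this is a Bayesian/planted model, and using the independence of $X$ and $Y$ where needed) then lets me replace $\E\langle xy^\intercal\rangle\cdot\langle x'y'^\intercal\rangle$ by $\E\langle xy^\intercal\rangle\cdot XY^\intercal$ and factorize the matrix overlap, matching $\partial_{\hone}\bF_n\cdot\partial_{\htwo}\bF_n$ up to the replica-fluctuation discrepancy.

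The key point, and the source of both error terms on the right-hand side, is that $\E\partial_t\bF_n$ is expressed via $\E\langle\,\cdot\,\rangle$ of overlap products whereas $(\partial_{\hone}\bF_n)(\partial_{\htwo}\bF_n) = \tfrac1{N^2}\E\langle\partial_{\hone}H_n\rangle\cdot\E\langle\partial_{\htwo}H_n\rangle$ is a product of expectations of Gibbs averages. The difference between $\E\langle A\rangle\langle B\rangle$ and $(\E\langle A\rangle)(\E\langle B\rangle)$ — i.e. the covariance of the two single-site magnetization-type quantities — is exactly what is controlled by $\tfrac12\E|\nabla(F_n - \bF_n)|^2$, via the identity $\partial_{h}F_n = \tfrac1N\langle\partial_h H_n\rangle$ and Cauchy--Schwarz/the fact that $\mathrm{Var}$-type cross terms are dominated by $\E|\nabla F_n - \E\nabla F_n|^2$. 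The second-derivative Gaussian-integration-by-parts also generates the term $\tfrac1{2N}\Delta\bF_n$: the Laplacian $\Delta\bF_n = \partial_{\hone}^2\bF_n + \partial_{\htwo}^2\bF_n$ equals $\tfrac1N$ times the sum of the Gibbs variances of $|x|^2$-ish and $|y|^2$-ish observables, which is precisely the leftover from integrating by parts in $U$ and $V$ a second time.

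The main obstacle I anticipate is the careful bookkeeping needed to see that \emph{all} singular terms cancel exactly and that the mismatch reorganizes cleanly into the stated bound: one must (i) correctly track the $\tfrac1N$ factors through the matrix Gaussian integration by parts over $W$, (ii) invoke the Nishimori identity in the right places to swap replica overlaps for signal overlaps (this is where independence of $X$ and $Y$ enters, since it is what makes the $h$-derivatives of $\bF_n$ factor the way \eqref{eq:HJ_intro} demands), and (iii) bound the remaining covariance $\E\big[(\partial_{\hone}F_n - \partial_{\hone}\bF_n)(\partial_{\htwo}F_n - \partial_{\htwo}\bF_n)\big]$ by $\tfrac12\E|\nabla(F_n-\bF_n)|^2$ using Young's inequality, while the positive second-order terms are collected into $\tfrac1{2N}\Delta\bF_n \ge 0$. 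Once the algebra is organized, the inequality follows by the triangle inequality between the exact differentiated identity and the target equation.
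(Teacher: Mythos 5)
The overall strategy you describe — compute the derivatives of $F_n$, take expectations, use Gaussian integration by parts plus the Nishimori identity to get overlap expressions, then control the discrepancy from the Hamilton--Jacobi equation by a covariance bound — is the right spirit, and matches the paper's opening moves: one obtains the clean forms $\partial_t\bF_n = N^{-2}\E\la(x\cdot x')(y\cdot y')\ra$, $\partial_{\hone}\bF_n = N^{-1}\E\la x\cdot x'\ra$, $\partial_{\htwo}\bF_n = N^{-1}\E\la y\cdot y'\ra$. However, the critical reorganization step in your sketch does not go through as written.

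You claim that the mismatch is precisely the covariance $\E\big[(\partial_{\hone}F_n-\partial_{\hone}\bF_n)(\partial_{\htwo}F_n-\partial_{\htwo}\bF_n)\big]$, to be handled by Young's inequality, plus a separately generated $\tfrac{1}{2N}\Delta\bF_n$ term from a ``second-order'' integration by parts. This conflates two different quantities. The actual discrepancy is
$\frac{1}{N^2}\big(\E\la(x\cdot x')(y\cdot y')\ra - \E\la x\cdot x'\ra\,\E\la y\cdot y'\ra\big)$,
which is a covariance of the \emph{overlaps} $x\cdot x'$ and $y\cdot y'$ under $\E\la\cdot\ra$, not a covariance of the free energy derivatives. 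One has $\partial_{\hone}F_n = N^{-1}\la\partial_{\hone}H_n\ra$ where $\partial_{\hone}H_n = \frac{1}{\sqrt{2\hone}}U\cdot x + 2x\cdot X - |x|^2$; this agrees with $N^{-1}\la x\cdot x'\ra$ only after taking the quenched expectation (by Nishimori and Gaussian IBP), not pointwise in the disorder. So $\mathrm{Cov}(\la x\cdot x'\ra, \la y\cdot y'\ra) \neq N^2\,\mathrm{Cov}(\partial_{\hone}F_n,\partial_{\htwo}F_n)$, and the Young's-inequality step you propose does not apply directly to the quantity you need to control.

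The paper instead first applies the AM--GM inequality to the overlap covariance, reducing to the two overlap variances $\E\la(x\cdot x' - \E\la x\cdot x'\ra)^2\ra$ and $\E\la(y\cdot y' - \E\la y\cdot y'\ra)^2\ra$, and then proves the single key inequality
\begin{align*}
\E\la(x\cdot x' - \E\la x\cdot x'\ra)^2\ra \leq N\,\partial^2_{\hone}\bF_n + N^2\,\E(\partial_{\hone}F_n - \partial_{\hone}\bF_n)^2 ,
\end{align*}
which is where \emph{both} the Laplacian and the gradient-fluctuation terms in the statement come from simultaneously. Establishing this requires the nontrivial lower bound $\E\la(\partial_{\hone}H_n)^2\ra \geq \E\la(x\cdot x')^2\ra + \frac{1}{2\hone}\E\la|x|^2\ra$, obtained by repeated Gaussian IBP and Nishimori identities involving up to three replicas; your sketch does not anticipate this computation, and without it the argument does not close. (Also, for this lemma the Nishimori identity does not need $X$ and $Y$ to be independent — it holds for the joint planted model, and the paper states Lemma~\ref{lemma:HJ} without that hypothesis; independence is only invoked later to guarantee separability of the initial condition.)
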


The Laplacian $\Delta$ and the gradient $\nabla$ are all carried out in the variable $h$. Note that the relation $\bF_n$ satisfies in this lemma bears a strong resemblance to the Hamilton--Jacobi equation \eqref{eq:general_2d_HJ}. The Laplacian term above can be seen as the vanishing viscosity. The last term has a flavor of concentration. These suggest that $\bF_n$ should satisfy \eqref{eq:general_2d_HJ} asymptotically.

\subsection{Proof of Lemma \ref{lemma:HJ}}
Let us introduce the notation $\la \cdot \ra$ for the Gibbs measure. For a measurable function $f:\R^m\times \R^n\to \R$, we write
\begin{align*}
    \la f(x,y) \ra = \big(\mathscr{Z}(t,h)\big)^{-1} \int_{\R^m\times \R^n}f(x,y)e^{H_n(t,h,x,y)}P^{X,Y}_n(\d x,\d y),
\end{align*}
with the normalizing factor
\begin{align*}
    \mathscr{Z}(t,h) = \int_{\R^m\times \R^n}e^{H_n(t,h,x,y)}P^{X,Y}_n(\d x,\d y).
\end{align*}
In view of $H_n$ defined in \eqref{eq:H_n_expression}, the probability measure $\la \cdot \ra$ is random, and depends on $t$ and $h$. For simplicity of notation, such dependence is suppressed. We also consider independent copies of $(x,y)$ with respect to this Gibbs measure. They are called ``replicas'' and denoted as $(x',y')$, $(x'',y'')$, etc. 

Two important tools in our computations are the Nishimori identity and the Gaussian integration by parts. 
The Nishimori identity can be stated as the following: for a bounded measurable $f:(\R^m\times \R^n)^2 \to\R$,
\begin{align}\label{eq:Nishimori}
    \E\la f(x,y,X,Y) \ra = \E \la f(x,y,x',y')\ra,
\end{align}
where $(x',y')$ is replica of $(x,y)$.
To show this, the key step is to verify the following identity based on Bayes' rule
\begin{align*}
    \la f(x,y)\ra = \E\big[ f(X,Y)|\mathcal{Z}\big]
\end{align*}
where, with $Z$ given in \eqref{eq:Z_observe},
\begin{align*}
    \mathcal{Z} = \big(Z,\ \sqrt{2h_1}X+U,\ \sqrt{2h_2}Y+V\big).
\end{align*}
An obvious extension of \eqref{eq:Nishimori} holds when more replicas are involved. In other words, the Nishimori identity allows us to replace one replica by $(X,Y)$ inside $\E\la \cdot \ra$. For more details, one can see \cite[Section 3.1]{mourrat2019hamilton} in a slightly different setting. 

The other ingredient,
the Gaussian integration by parts, is essentially the  fact that, for any smooth $f:\R\to \R$ with polynomial growth,
\begin{align*}
    \int_\R xf(x)e^{-x^2/2}\d x = \int_\R f'(x)e^{-x^2/2}\d x.
\end{align*}

\bigskip

Recall the definition of $F_n$ in \eqref{eq:def_F_n}. We start to compute the derivatives of $F_n$ and $\bF_n$. The first order derivatives of $F_n$ are given by
\begin{align}
    \partial_t F_n(t,h) &= \frac{1}{N}\bigg\la \frac{1}{\sqrt{2Nt}}y\cdot Wx+\frac{2}{N}xy^\intercal \cdot XY^\intercal -\frac{1}{N^2}|xy^\intercal|^2\bigg\ra,\label{eq:1st_der_F_t}\\
    \partial_\hone F_n(t,h) &= \frac{1}{N}\la \partial_\hone H_n\ra = \frac{1}{N}\bigg\la \frac{1}{\sqrt{2\hone}}U\cdot x + 2x\cdot X-|x|^2\bigg\ra,\label{eq:1st_der_F}\\
    \partial_\htwo F_n(t,h) &= \frac{1}{N}\la \partial_\htwo H_n\ra = \frac{1}{N}\bigg\la \frac{1}{\sqrt{2\htwo}}V\cdot y+ 2y\cdot Y-|y|^2\bigg\ra.\label{eq:1st_der_F_h_2}
\end{align}

After taking expectations, we apply the Nishimori identity and the Gaussian integration by parts to obtain
\begin{align}
    \partial_t \bF_n(t,h) &= \frac{1}{N^2}\E\la(x\cdot x')(y\cdot y')\ra,\label{eq:1st_der_bF_t}\\
    \partial_\hone \bF_n(t,h) &= \frac{1}{N}\E \la \partial_\hone H_n\ra =\frac{1}{N}\E\la x\cdot x'\ra,\label{eq:1st_der_bF}\\
    \partial_\htwo \bF_n(t,h) &= \frac{1}{N}\E \la \partial_\htwo H_n\ra =\frac{1}{N}\E\la y\cdot y'\ra.\label{eq:1st_der_bF_h_2}
\end{align}
Therefore, we have
\begin{align*}
    \partial_t \bF_n - \big(\partial_\hone \bF_n\big)\big( \partial_\htwo \bF_n \big)& = \frac{1}{N^2}\Big(\E\la(x\cdot x')(y\cdot y')\ra - \E\la x\cdot x'\ra \E\la y\cdot y'\ra \Big)\\
    & = \frac{1}{N^2}\E\Big\la (x\cdot x'- \E\la x\cdot x'\ra) (y\cdot y'- \E\la y\cdot y'\ra)\Big\ra.
\end{align*}
After an application of the Cauchy--Schwarz inequality, it becomes
\begin{align}\label{eq:HJ_eqn}
    \Big|\partial_t \bF_n - \big(\partial_\hone \bF_n\big)\big( \partial_\htwo \bF_n \big) \Big|\leq \frac{1}{2N^2}\Big(\E\la(x\cdot x'- \E\la x\cdot x'\ra)^2\ra + \E\la(y\cdot y'- \E\la y\cdot y'\ra)^2\ra\Big).
\end{align}

We will show the following upper bound
\begin{align}\label{eq:key_up_bdd_approx_HJ}
    \E\la(x\cdot x'- \E\la x\cdot x'\ra)^2\ra \leq N\partial^2_\hone \bF_n + N^2\E(\partial_\hone F_n - \partial_\hone \bF_n)^2. 
\end{align}
A similar upper bound for the second term on the right of \eqref{eq:HJ_eqn} can be obtained. Lemma~\ref{lemma:HJ} follows from these two upper bounds and \eqref{eq:HJ_eqn}.

\subsubsection{Proof of \eqref{eq:key_up_bdd_approx_HJ}}
Using the expression of $\partial_\hone F_n$ in \eqref{eq:1st_der_F}, we can calculate
\begin{align}\label{eq:2nd_der_F}
    N \partial^2_\hone F_n & = \la (\partial_\hone H_n)^2\ra - \la \partial_\hone H_n\ra^2 -\frac{1}{(2\hone)^\frac{3}{2}}\la U\cdot x\ra
\end{align}
Apply the Gaussian integration by parts to obtain
\begin{align}\label{eq:2nd_der_bF}
    N \partial^2_\hone \bF_n= \E\la (\partial_\hone H_n)^2\ra - N^2\E(\partial_\hone F_n)^2 -\frac{1}{2\hone}\E\la|x|^2\ra + \frac{1}{2\hone}\E|\la x\ra|^2\ra.
\end{align}
The key computation is the following
\begin{align}\label{eq:key_lower_bd_approx_hj}
    \E\la (\partial_\hone H_n)^2\ra \geq \E\la (x\cdot x')^2\ra  + \frac{1}{2\hone}\E\la|x|^2\ra.
\end{align}
This will be done slightly later. Now insert \eqref{eq:key_lower_bd_approx_hj} into \eqref{eq:2nd_der_bF} to see
\begin{align*}
    N \partial^2_\hone \bF_n \geq \E\la (x\cdot x')^2\ra -   N^2\E(\partial_\hone F_n)^2. 
\end{align*}
Finally, by \eqref{eq:1st_der_bF}, we have
\begin{align*}
\E\la(x\cdot x'- \E\la x\cdot x'\ra)^2\ra = \E \la (x\cdot x')^2\ra - (\E\la x\cdot x'\ra)^2 = \E \la (x\cdot x')^2\ra - N^2 (\partial_\hone \bF_n)^2.
\end{align*}
From the above two displays, we can deduce \eqref{eq:key_up_bdd_approx_HJ}.

\bigskip

Lastly, let us derive \eqref{eq:key_lower_bd_approx_hj}. Using the expression of $\partial_\hone H_n$ in  \eqref{eq:1st_der_F}, we have
\begin{align}\label{eq:E(partial_H)^2}
\begin{split}
    &\E\la (\partial_\hone H_n)^2\ra = \E \bigg\la \bigg( \frac{1}{\sqrt{2\hone}}U\cdot x + 2x\cdot X-|x|^2\bigg)^2\bigg\ra=\\
    &\E \bigg\la \frac{1}{2\hone }(U\cdot x)^2  + 4(x\cdot X)^2+|x|^4 +\frac{4}{\sqrt{2\hone}}(U\cdot x)(x\cdot X)-\frac{2}{\sqrt{2\hone}}(U\cdot x)|x|^2-4(x\cdot X)|x|^2 \bigg\ra
\end{split}
\end{align}
Let us rewrite the first term in the above display as
\begin{align*}
    &\E\bigg\la\frac{1}{2\hone}(U\cdot x)^2 \bigg\ra =  \sum_{i,j=1}^m\frac{1}{2\hone}\E \la U_iU_jx_ix_j\ra.
\end{align*}
If $i\neq j$, using the Gaussian integration by parts, we have
\begin{align*}
    \frac{1}{2\hone}\E \la U_iU_jx_ix_j\ra = \E \la x_ix_j(x_i-x'_i)(x_j+x'_j-2x''_j)\ra.
\end{align*}
If $i=j$, we have
\begin{align*}
    \frac{1}{2\hone}\E \la U_iU_ix_ix_i\ra = \E \la x_ix_i(x_i-x'_i)(x_i+x'_i-2x''_i)\ra + \frac{1}{2\hone}\E \la x^2_i\ra.
\end{align*}
These three displays combined yield
\begin{align*}
    &\E\bigg\la\frac{1}{2\hone}(U\cdot x)^2 \bigg\ra  = \E \la |x|^4 -2 |x|^2(x\cdot x')-(x\cdot x')^2 +2(x\cdot x')(x\cdot x'')\ra + \frac{1}{2\hone}\E \la |x|^2 \ra.
\end{align*}
Other terms can be computed using the Nishimori identity and the Gaussian integration by parts. We shall omit the details but only list the results:
\begin{align*}
    &\E\la (x\cdot X)^2\ra = \E\la (x\cdot x')^2\ra,\\
    &\E\bigg\la \frac{1}{\sqrt{2\hone}}(U\cdot x)(x\cdot X) \bigg\ra = \E\la |x|^2(x\cdot x')+(x\cdot x')^2 - 2(x\cdot x')(x'\cdot x'')\ra,\\
    &\E\bigg\la \frac{1}{\sqrt{2\hone}}(U\cdot x)|x|^2\bigg\ra = \E\la |x|^4 - |x|^2(x\cdot x')\ra,\\
    &\E\la (x\cdot X)|x|^2 \ra = \E\la |x|^2(x\cdot x')\ra.
\end{align*}

Inserting these computations into \eqref{eq:E(partial_H)^2} yields
\begin{align*}
    \E\la (\partial_\hone H_n)^2\ra = \E\la (x\cdot x')^2\ra + 6\E\la (x\cdot x')^2 - (x\cdot x')(x\cdot x'')\ra + \frac{1}{2\hone}\E\la|x|^2\ra.
\end{align*}
Apply the Cauchy--Schwarz inequality and the symmetry of replicas to see
\begin{align*}
    \E\la (x\cdot x')(x\cdot x'')\ra \leq \tfrac{1}{2}\E\la (x\cdot x')^2\ra+ \tfrac{1}{2}\E\la (x\cdot x'')^2\ra = \E\la (x\cdot x')^2\ra .
\end{align*}
These two displays imply \eqref{eq:key_lower_bd_approx_hj}.

\subsection{Basic estimates}

We end this section by proving basic estimates for derivatives of $F_n$ and $\bF_n$.

\begin{Lemma}\label{lemma:useful_estimates}
There is a constant $C>0$ such that the following hold for all $n\in\N$ and all $(t,h)\in\R^3_+$:
\begin{align}
    &|\bF_n|\leq C(t+|h|); \label{eq:bF<C(t+|h|)}\\
    &|\partial_t \bF_n|,\quad |\nabla \bF_n|\leq C; \label{eq:grad_bF<C} \\
    &\partial_t^2 \bF_n,\quad \partial_{h_i}\bF_n,\quad \partial_{h_i} \partial_{h_j} \bF_n \geq0,\qquad \forall  i,j \in \{1,2\};\label{eq:1st_2nd_d_bF_lower_bound}\\
    &|\partial_t F_n|\leq C\bigg(1+\frac{|W|_\mathrm{op}}{\sqrt{nt}}\bigg),\qquad  |\nabla F_n|\leq C\bigg(1+\frac{|U|+|V|}{\sqrt{n|h|}}\bigg);\label{eq:1st_der_F_n_est}\\
    &\partial^2_\hone F_n \geq -C|U|n^{-\frac{1}{2}}h_1^{-\frac{3}{2}},\qquad \partial^2_\htwo F_n \geq -C|V|n^{-\frac{1}{2}}h_2^{-\frac{3}{2}}.\label{eq:2nd_d_F_lower_bound}
\end{align}
\end{Lemma}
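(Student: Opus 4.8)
\textbf{Proof proposal for Lemma~\ref{lemma:useful_estimates}.}

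The plan is to establish the six displays more or less in the order listed, using the variational/Gibbs representations already derived for the first derivatives of $F_n$ and $\bF_n$, together with the support assumption \eqref{eq:support_X_Y} and the crude bound $|xy^\intercal|^2 = |x|^2|y|^2 \le mn = N^2$ which holds $\la\cdot\ra$-a.s. First I would prove \eqref{eq:grad_bF<C}: from \eqref{eq:1st_der_bF_t}, \eqref{eq:1st_der_bF}, \eqref{eq:1st_der_bF_h_2} we have $\partial_{\hone}\bF_n = \frac1N\E\la x\cdot x'\ra$, and since $|x\cdot x'|\le |x||x'|\le m$ by Cauchy--Schwarz and \eqref{eq:support_X_Y}, this is at most $m/N = \sqrt{m/n}$, which is bounded by \eqref{eq:m/n_to_alpha}; similarly $|\partial_{\htwo}\bF_n|\le n/N = \sqrt{n/m}\le C$, and $|\partial_t\bF_n| = \frac1{N^2}|\E\la(x\cdot x')(y\cdot y')\ra|\le \frac{mn}{N^2}=1$. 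Then \eqref{eq:bF<C(t+|h|)} follows by integrating \eqref{eq:grad_bF<C} along a segment from the origin, once one notes $\bF_n(0,0) = \frac1N\E\log\int P^{X,Y}_n = 0$.

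Next, for the nonnegativity statements \eqref{eq:1st_2nd_d_bF_lower_bound}: the signs $\partial_{h_i}\bF_n\ge 0$ are immediate from \eqref{eq:1st_der_bF}--\eqref{eq:1st_der_bF_h_2} via Nishimori, since $\E\la x\cdot x'\ra = \E|\la x\ra|^2\ge 0$ and likewise for $y$; and $\partial_t\bF_n\ge 0$ from \eqref{eq:1st_der_bF_t} since, again by Nishimori applied to both vector groups, $\E\la(x\cdot x')(y\cdot y')\ra = \E|\la xy^\intercal\ra|^2\ge 0$. For the Hessian entries I would differentiate the first-derivative formulas once more. The diagonal entry $\partial^2_{\hone}\bF_n\ge 0$ is essentially \eqref{eq:key_up_bdd_approx_HJ} read backwards: rearranging \eqref{eq:2nd_der_bF} together with the key lower bound \eqref{eq:key_lower_bd_approx_hj} gives $N\partial^2_{\hone}\bF_n\ge \E\la(x\cdot x')^2\ra - N^2\E(\partial_{\hone}F_n)^2 = \E\la(x\cdot x'-\E\la x\cdot x'\ra)^2\ra\ge 0$ (same computation as in the proof of \eqref{eq:key_up_bdd_approx_HJ}, and symmetrically for $\partial^2_{\htwo}$). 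The mixed entry $\partial_{\hone}\partial_{\htwo}\bF_n\ge 0$ and $\partial^2_t\bF_n\ge0$ require a Gaussian-integration-by-parts computation starting from \eqref{eq:1st_der_bF} and \eqref{eq:1st_der_bF_t} respectively; I expect each of these to reduce, after Nishimori, to the expectation of a manifestly nonnegative quantity such as a squared overlap fluctuation --- e.g. $\partial_{\hone}\partial_{\htwo}\bF_n = \frac1{N^2}\E\la(x\cdot x' - \E\la x\cdot x'\ra)(y\cdot y'-\E\la y\cdot y'\ra)\ra$ interpreted via the covariance structure, which is nonnegative by a Nishimori/FKG-type argument.

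For \eqref{eq:1st_der_F_n_est}, I would bound the bracketed quantities in \eqref{eq:1st_der_F_t}--\eqref{eq:1st_der_F_h_2} termwise under $\la\cdot\ra$: in \eqref{eq:1st_der_F_t}, $|xy^\intercal\cdot XY^\intercal|\le |xy^\intercal||XY^\intercal|\le N^2$ and $|xy^\intercal|^2\le N^2$ contribute $O(1)$ after the $\frac1N$ prefactor, while $|y\cdot Wx|\le |W|_{\mathrm{op}}|x||y|\le |W|_{\mathrm{op}}N$, giving the $\frac{|W|_{\mathrm{op}}}{\sqrt{Nt}}\asymp\frac{|W|_{\mathrm{op}}}{\sqrt{nt}}$ term; the gradient bound is analogous using $|U\cdot x|\le|U|\sqrt m$, $|V\cdot y|\le|V|\sqrt n$, $|x\cdot X|\le m$, $|x|^2\le m$, etc., all divided by $N$. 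Finally \eqref{eq:2nd_d_F_lower_bound} comes from \eqref{eq:2nd_der_F}: the difference $\la(\partial_{\hone}H_n)^2\ra - \la\partial_{\hone}H_n\ra^2$ is a variance, hence $\ge 0$, so $N\partial^2_{\hone}F_n\ge -\frac1{(2\hone)^{3/2}}\la U\cdot x\ra\ge -\frac{|U|\sqrt m}{(2\hone)^{3/2}}$, which rearranges to the claimed bound after using $\sqrt m/N = 1/\sqrt n$.

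The main obstacle is the mixed second-derivative sign $\partial_{\hone}\partial_{\htwo}\bF_n\ge 0$ (and, relatedly, $\partial^2_t\bF_n\ge 0$): unlike the pure second derivatives, these are not pure variances and do not directly fall out of a single integration by parts, so one must carefully track the Gaussian-IBP terms and then invoke a Nishimori-type identity (of the form that, under the planted measure, correlations of overlaps between two decoupled blocks are nonnegative) to express the result as an expectation of a nonnegative quantity. Everything else is routine bookkeeping with Cauchy--Schwarz and the scaled-support bound $|x|\le\sqrt m$, $|y|\le\sqrt n$, $N=\sqrt{mn}$.
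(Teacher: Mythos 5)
Your treatment of \eqref{eq:bF<C(t+|h|)}, \eqref{eq:grad_bF<C}, the signs of the first derivatives, \eqref{eq:1st_der_F_n_est}, and \eqref{eq:2nd_d_F_lower_bound} matches the paper and is correct. The gap, as you anticipated, is in the second-derivative nonnegativity statements of \eqref{eq:1st_2nd_d_bF_lower_bound}, and in fact the gap is wider than you suggest.

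First, your shortcut for $\partial^2_{\hone}\bF_n\geq 0$ fails. You claim
$N\partial^2_{\hone}\bF_n\geq \E\la(x\cdot x')^2\ra - N^2\E(\partial_{\hone}F_n)^2 = \E\la(x\cdot x'-\E\la x\cdot x'\ra)^2\ra$,
but the asserted equality is false: since $\E\la x\cdot x'\ra = N\,\partial_{\hone}\bF_n = N\,\E\partial_{\hone}F_n$, the correct identity is $\E\la(x\cdot x'-\E\la x\cdot x'\ra)^2\ra = \E\la(x\cdot x')^2\ra - N^2(\E\partial_{\hone}F_n)^2$, and by Jensen $N^2\E(\partial_{\hone}F_n)^2\geq N^2(\E\partial_{\hone}F_n)^2$. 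So the quantity you exhibit as a lower bound for $N\partial^2_{\hone}\bF_n$ equals $\E\la(x\cdot x'-\E\la x\cdot x'\ra)^2\ra - N^2\,\mathrm{Var}(\partial_{\hone}F_n)$, which has no a priori sign. This is exactly why \eqref{eq:key_up_bdd_approx_HJ} is an upper bound on the overlap variance rather than an identity; reading it backwards does not yield $\partial^2_{\hone}\bF_n\geq 0$.

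Second, your guessed formula $\partial_{\hone}\partial_{\htwo}\bF_n = \tfrac1{N^2}\E\la(x\cdot x' - \E\la x\cdot x'\ra)(y\cdot y'-\E\la y\cdot y'\ra)\ra$ is not a guess at the right object: this is precisely the expression the paper derives for $\partial_t\bF_n - (\partial_{\hone}\bF_n)(\partial_{\htwo}\bF_n)$ in the proof of Lemma~\ref{lemma:HJ}, which is the error term in the approximate Hamilton--Jacobi equation and has no definite sign (indeed that is the point of Lemma~\ref{lemma:HJ}); no FKG-type argument rescues it. The paper instead differentiates \eqref{eq:1st_der_bF} once more in $h_2$, applies Gaussian integration by parts and Nishimori, and collects terms to reach a manifestly nonnegative sum of squares:
\begin{align*}
\partial_{\hone}\partial_{\htwo}\bF_n = 2N^{-1}\,\E\big|\la xy^\intercal\ra - \la x\ra\la y\ra^\intercal\big|^2,
\qquad
\partial^2_{\hone}\bF_n = 2N^{-1}\,\E\big|\la xx^\intercal\ra - \la x\ra\la x\ra^\intercal\big|^2,
\end{align*}
and similarly
\begin{align*}
\partial^2_t\bF_n = \frac{2}{N^3}\sum_{i,j,k,l}\E\big(\la x_iy_jx_ky_l\ra - \la x_iy_j\ra\la x_ky_l\ra\big)^2 .
\end{align*}
So the mechanism is not a covariance-positivity or correlation-inequality argument; all three Hessian entries, including the diagonal ones, must be computed by the same Gaussian-IBP/Nishimori calculation and land on explicit squared $L^2$-fluctuations of the Gibbs average. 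You should carry out that computation explicitly rather than attempting to deduce it from \eqref{eq:key_up_bdd_approx_HJ} or from a sign heuristic.
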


In \eqref{eq:1st_der_F_n_est}, we use  $|\cdot|_\mathrm{op}$ to denote the operator norm, namely,
\begin{align*}
    |W|_\mathrm{op} = \sup_{x\in \R^m,\ |x|\leq 1} \big|Wx\big|.
\end{align*}

\subsubsection*{Proof of \eqref{eq:bF<C(t+|h|)}}
Note that $\bF_n(0,0) =0$ for all $n$. Hence \eqref{eq:bF<C(t+|h|)} follows from \eqref{eq:grad_bF<C}.

\subsubsection*{Proof of \eqref{eq:grad_bF<C}}
This estimate follows from the formulae in \eqref{eq:1st_der_bF_t}-\eqref{eq:1st_der_bF_h_2} and the assumption \eqref{eq:support_X_Y} on the boundedness of $X$ and $Y$.

\subsubsection*{Proof of \eqref{eq:1st_2nd_d_bF_lower_bound}}

Due to the independence of replicas, we have $\E\la (x\cdot x')\ra =\E |\la x\ra |^2$. Hence, it is evident from \eqref{eq:1st_der_bF}-\eqref{eq:1st_der_bF_h_2} that $\partial_{h_i}\bF_n\geq 0$, for $i=1,2$.

Then, we compute $\partial_{h_i}\partial_{h_j} \bF_n$ for $i,j\in\{1,2\}$. Recall the definition of $H_n$ in \eqref{eq:H_n_expression} and expressions of first order derivatives in \eqref{eq:1st_der_bF}-\eqref{eq:1st_der_bF_h_2}. Using these and the Nishimori identity, we compute
\begin{align*}
    \partial_\hone\partial_\htwo \bF_n &= \partial_\htwo \big(N^{-1}\E \la x, x'\ra \big)\\
    &= N^{-1}\E\Big\la 2x\cdot x'\big((2\htwo)^{-\frac{1}{2}}V\cdot y+ 2 y\cdot y''-y\cdot y\big) \\
    &\qquad -2x\cdot x' \big((2\htwo)^{-\frac{1}{2}}V\cdot y'' + 2y''\cdot y''' -y''\cdot y''\big)\Big\ra.
\end{align*}
Apply the Gaussian integration to get
\begin{align*}
    \partial_\hone\partial_\htwo \bF_n &=2N^{-1}\E\Big\la x\cdot x'\big((y+y'-2y'')\cdot y+2y\cdot y''- y\cdot y\big)\\
    & \qquad -x\cdot x'\big((y+y'+y''-3y''')\cdot y'' + 2y''\cdot y'''-y''\cdot y'''\big)\Big\ra.
\end{align*}
Collecting terms and using the symmetry of replicas, we arrive at
\begin{align*}
    \partial_\hone\partial_\htwo \bF_n &= 2N^{-1}\E\big\la (x\cdot x')(y\cdot y') - 2 (x\cdot x')(y\cdot y'') + (x\cdot x')(y''\cdot y''')\big\ra\\
    & = 2N^{-1}\E\big|\la xy^\intercal \ra - \la x\ra\la y\ra^\intercal \big|^2\geq 0.
\end{align*}
To compute $\partial^2_\hone \bF_n$, we repeat the above calculation with $h_2, y, V$ replaced by $h_1, x, U$. In a similar way, we can also treat $\partial^2_\htwo \bF_n$. These calculations yield
\begin{align*}
\begin{split}
    \partial^2_\hone \bF_n &= 2N^{-1}\E\big|\la xx^\intercal \ra - \la x\ra\la x\ra^\intercal \big|^2\geq 0,\\
    \partial^2_\htwo \bF_n &= 2N^{-1}\E\big|\la yy^\intercal \ra - \la y\ra\la y\ra^\intercal \big|^2\geq 0.
\end{split}
\end{align*}

Lastly, we compute $\partial^2_t\bF_n$. Recall the formula of $\partial_t\bF_n$ in \eqref{eq:1st_der_bF_t}. Take one more derivative in $t$ and we have
\begin{align*}
    \partial^2_t\bF_n &= \frac{2}{N^3}\E\bigg\la (x\cdot x')(y\cdot y')\bigg(\sqrt{\frac{N}{2t}}x\cdot Wy+2xy^\intercal\cdot XY^\intercal -|xy^\intercal|^2 \\
    &\qquad - \sqrt{\frac{N}{2t}}x''\cdot Wy'' -2x''y''^\intercal \cdot XY^\intercal - |x''y''^\intercal|^2 \bigg)\bigg\ra.
\end{align*}
Using the Gaussian integration by parts and the Nishimori identity, we obtain, after collecting  terms,
\begin{align*}
    \partial^2_t\bF_n &= \frac{2}{N^3}\E\Big\la (x\cdot x')(y\cdot y')\Big((x\cdot x')(y\cdot y')-2(x\cdot x'')(y\cdot y'')+(x''\cdot x''')(y''\cdot y''')\Big)\Big\ra\\
    & = \frac{2}{N^3} \sum_{i,j,k,l}\E \big(\la x_iy_jx_ky_l\ra  - \la x_iy_j\ra \la x_ky_l\ra\big)^2\geq 0.
\end{align*}

\subsubsection*{Proof of \eqref{eq:1st_der_F_n_est}}
This result is a consequence of \eqref{eq:1st_der_F_t}-\eqref{eq:1st_der_F_h_2} and the boundedness assumption \eqref{eq:support_X_Y}.

\subsubsection*{Proof of \eqref{eq:2nd_d_F_lower_bound}}

Recognizing a variance term on the right hand side of \eqref{eq:2nd_der_F}, we have
\begin{align*}
    \partial^2_\hone F_n \geq - \frac{1}{N(2\hone)^\frac{3}{2}}\la U\cdot x\ra. 
\end{align*}
By the same argument, a similar lower bound also holds for $\partial^2_\htwo F_n$.

\section{Weak solutions of Hamilton--Jacobi equations}\label{section:HJ_eqn}

In this section, we study a slightly more general version of the Hamilton--Jacobi equation \eqref{eq:general_2d_HJ}. We will define the notion of weak solutions and prove the uniqueness of solutions. Under additional assumptions on the initial condition $\psi$, we verify that the Hopf formula gives a weak solution.

\smallskip

Let us describe the setting. Let $d\in\N$, $\psi:\R^d_+\to\R$ be a continuous function, and $\H:\R^d\to \R$ be smooth and satisfy
\begin{align}\label{eq:hessian_H}
    \partial^2_{ij}\H(p) \geq 0, \quad \forall p \in \R^d_+,\ \forall i,j.
\end{align}
We investigate the following equation
\begin{numcases}{}
\partial_t f(t,x) -\H\big(\nabla f(t,x)\big) =0,\quad (t,x)\in \R_+\times \R_+^d,\label{eq:general_HJ_eqn}\\
    f(0,x) = \psi(x), \quad x\in\R^d_+.\label{eq:general_initial_value}
\end{numcases}

Now, we give the precise definition of a weak solution.

\begin{Def}\label{def:weak_sol}
A function $f:\R_+\times \R_+^d$ is a weak solution to \eqref{eq:general_HJ_eqn}-\eqref{eq:general_initial_value} if
\begin{enumerate}
    \item  $f$ is Lipschitz and satisfies  \eqref{eq:general_HJ_eqn} almost everywhere;
    \item \label{item:2} $f(0,x)=\psi(x)$ for all $x\in\R^d_+$;
    \item for each $t\geq 0$, $f(t,\cdot)$ is nondecreasing;
    \item \label{item:4} for all $(t,x)\in \R_+^{d+1}$, all $\lambda\geq 0$, and all $i,j\in\{1,2,\dots,d\}$, it holds that
    \begin{align}\label{eq:partial_convex_f}
        f(t,x+\lambda e_i+\lambda e_j) +f(t,x) - f(t,x+\lambda e_i) -f(t,x+\lambda e_j) & \geq 0.
    \end{align}
\end{enumerate}
\end{Def}

Here, a function $u:\R^d_+\to \R$ is called nondecreasing provided $u(x)-u(x')\geq 0$ if $x- x'\in\R^d_+$. This monotonicity condition serves as a Neumann type boundary condition. In \eqref{item:4}, $\{e_i\}_{i=1}^d$ is the standard basis for $\R^d$. Part~\eqref{item:4} can be interpreted as a partial convexity condition. Indeed, this condition implies that, for mollifiers $\xi_\eps$ of the type introduced in Remark~\ref{Rem:infinity_norm},
\begin{align}\label{eq:2nd_mixed_derivatives_f}
    \partial^2_{ij}(f*\xi_\eps)\geq 0,\quad \forall i, j.
\end{align}

Below are our results on the uniqueness and the existence of solutions.

\begin{Prop}[Uniqueness]\label{Prop:uniq_sol_HJ}
The equation \eqref{eq:general_HJ_eqn}-\eqref{eq:general_initial_value} has at most one weak solution.
\end{Prop}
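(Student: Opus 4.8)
The plan is to prove uniqueness by a doubling-of-variables / comparison argument adapted to the Neumann-type boundary, exploiting the structure imposed by Definition~\ref{def:weak_sol}. Suppose $f$ and $g$ are both weak solutions. The goal is to show $f = g$ on $\R_+\times\R_+^d$. The first step is to reduce to a comparison principle: if $u$ is a Lipschitz subsolution and $v$ a Lipschitz supersolution, both satisfying the monotonicity and partial-convexity conditions and with $u(0,\cdot)\le v(0,\cdot)$, then $u\le v$ everywhere; applying this twice (with $(u,v)=(f,g)$ and then $(g,f)$) gives the claim. Since both $f$ and $g$ solve the equation a.e.\ and share the same initial data, the real content is the comparison statement.

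The key device I would use is mollification in the $x$-variable. Because of part~\eqref{item:4} of Definition~\ref{def:weak_sol}, the mollified functions $f*\xi_\eps$ satisfy $\partial^2_{ij}(f*\xi_\eps)\ge 0$ for all $i,j$, i.e.\ all mixed second derivatives (including diagonal ones in the relevant directions) are nonnegative; combined with \eqref{eq:hessian_H}, this is precisely the sign condition that makes $p\mapsto\H(p)$ ``monotone along the gradient flow'' in the right way. Concretely, I expect the argument to run: mollify both solutions in $x$; estimate the error made in the PDE by the mollification using the Lipschitz bound and the Hessian sign (the mollification error term, after integrating against a test function, has a favorable sign thanks to \eqref{eq:2nd_mixed_derivatives_f} and \eqref{eq:hessian_H}, rather than merely being $O(\eps)$); then run a Gronwall-type estimate on $\sup_x (f-g)_+(t,x)$ or on a suitable integral functional $\int (f-g)_+\,\rho\,dx$ with a cleverly chosen weight $\rho$ that handles the unbounded domain and the boundary. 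The monotonicity condition (part~(3)) is what lets the boundary of $\R_+^d$ be treated as a Neumann boundary: the outward derivative of a nondecreasing function has the correct sign, so no boundary term with the wrong sign appears when integrating by parts.

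More precisely, I would set $w = f-g$, note $w$ is Lipschitz, $w(0,\cdot)=0$, and that a.e.
\[
\partial_t w = \H(\nabla f) - \H(\nabla g) = \int_0^1 \nabla\H\big(s\nabla f + (1-s)\nabla g\big)\,ds \cdot \nabla w =: b(t,x)\cdot\nabla w,
\]
so $w$ solves a linear transport equation with bounded (but merely measurable) drift $b$. The subtlety is that $b$ is not regular enough for the classical method of characteristics, which is exactly why one mollifies and uses the convexity-type information: the sign conditions ensure that the relevant vector field is ``one-sided Lipschitz'' / that the flow does not create mass, so $w(0,\cdot)=0$ forces $w\equiv 0$. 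For the test-function/weight bookkeeping I would follow the scheme of \cite{mourrat2019hamilton}: pair with $\zeta(t)\,\partial_t(\text{something})$ or directly estimate $\frac{d}{dt}\int \theta(w)\rho$ for a smooth convex approximation $\theta$ of $(\cdot)_+$ and an exponential weight $\rho(x)=e^{-|x|}$, absorbing the drift term using $|b|\le C$ (from \eqref{eq:grad_bF<C}-type Lipschitz bounds, valid here since weak solutions are Lipschitz) and the mollification error term using its sign.

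The main obstacle I anticipate is handling the mollification error term cleanly: one must show that replacing $f$ by $f*\xi_\eps$ in the nonlinearity produces an error that is controlled \emph{with the right sign} by $\partial^2_{ij}(f*\xi_\eps)\ge 0$ and $\partial^2_{ij}\H\ge 0$ — this is a convexity (Jensen-type) inequality $\H(\nabla(f*\xi_\eps)) \le (\H(\nabla f))*\xi_\eps$ that requires $\H$ convex on the relevant set, which we only have as a partial statement via \eqref{eq:hessian_H}, so one needs the partial convexity of $f$ to restrict attention to the directions where it holds. A secondary technical point is the interplay with the boundary: one must make sure the mollification does not push mass outside $\R_+^d$, which is why the monotonicity (Neumann) condition is used together with a one-sided extension or reflection of $f$. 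Once these two points are settled, the Gronwall estimate closes the argument.
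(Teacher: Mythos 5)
Your skeleton agrees with the paper's: set $w=f-g$, linearize $\H(\nabla f)-\H(\nabla g)=b\cdot\nabla w$, mollify in $h$, and exploit the monotonicity plus partial convexity to kill the drift term; the paper also uses the shrinking finite-speed-of-propagation domains $D_t$ rather than your exponential weight, but that is a presentational choice, not an essential one. However, at the one step that is the crux of the argument, your proposed mechanism does not work. You anticipate resolving the mollification issue via a Jensen-type inequality of the form $\H(\nabla(f*\xi_\eps))\leq (\H(\nabla f))*\xi_\eps$. That inequality is false here: $\H(p)=\prod_i p_i$ is neither convex nor concave, and assumption \eqref{eq:hessian_H} only says that the Hessian entries of $\H$ are nonnegative on $\R^d_+$, which is a much weaker statement. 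Moreover, even if it held, this is not where the structure is used. In the paper's proof the mollification error $(b-b_\eps)\cdot\nabla v$ is simply $o_\eps(1)$ by bounded convergence; it is not given a favorable sign.

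The structural inputs \eqref{eq:hessian_H} and \eqref{eq:2nd_mixed_derivatives_f} enter instead through the computation
\begin{align*}
\div b_\eps \;=\; \int_0^1 \nabla^2\H\bigl(r\nabla f_\eps+(1-r)\nabla g_\eps\bigr)\cdot\bigl(r\nabla^2 f_\eps+(1-r)\nabla^2 g_\eps\bigr)\,\d r \;\geq\; 0,
\end{align*}
because every entry of $\nabla^2\H$ is nonnegative and every entry of $\nabla^2 f_\eps$, $\nabla^2 g_\eps$ is nonnegative by the monotonicity and partial-convexity hypotheses on weak solutions. Writing $v=\phi(w)\geq 0$ and $\partial_t v=\div(vb_\eps)-v\div b_\eps+(b-b_\eps)\cdot\nabla v$, it is the term $-v\div b_\eps\leq 0$ that provides the decisive sign, while the boundary terms on $\partial D_t$ are handled by choosing the cone speed $R$ larger than $\|\nabla\H\|_\infty$ on the relevant compact set, and (for the faces touching the axes) by noting $b_\eps\in\R^d_+$ there since $\nabla f_\eps,\nabla g_\eps\geq 0$. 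Your ``one-sided Lipschitz / no mass creation'' intuition is actually the right one, and amounts exactly to $\div b_\eps\geq 0$; but as written you offer an incorrect route to it, and the obstacle you flag in your final paragraph is not resolved. If you replace the Jensen step by the divergence computation above, the rest of your plan (including the exponential-weight Gronwall variant) can be made to work.
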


\begin{Prop}[Hopf formula]\label{Prop:hopf_formula}

Suppose $\H$ is given by \begin{align}\label{eq:def_general_H}
    \H(p)=\prod_{i=1}^dp_i,\quad p\in\R^d.
\end{align}
In addition, suppose, for $1\leq i\leq d$, there are Lipschitz, convex and nondecreasing functions $\psi_i:\R_+\to \R$ such that 
    \begin{align}\label{eq:def_general_psi}
        \psi(x)=\sum_{i=1}^d \psi_i(x_i),\quad x \in \R^d_+.
    \end{align}
Then, there exits a unique weak solution $f$ to \eqref{eq:general_HJ_eqn}--\eqref{eq:general_initial_value} given by the Hopf formula:
\begin{align*}
    f(t,x) &= \sup_{z\in \R^d_+}\inf_{y\in \R^d_+}\big\{z\cdot (x-y) +\psi(y) +t \H(z)\big\}.
\end{align*}
\end{Prop}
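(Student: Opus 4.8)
The plan is to verify directly that the function $f$ defined by the Hopf formula satisfies the four conditions in Definition~\ref{def:weak_sol}; uniqueness then follows immediately from Proposition~\ref{Prop:uniq_sol_HJ}. Since $\H(p)=\prod_i p_i$ is convex on no natural domain but the initial condition $\psi$ \emph{is} convex (being a sum of convex $\psi_i$), the natural route is the classical Hopf--Lax/Hopf duality for convex data: one first rewrites the inner infimum using the Legendre transform. Concretely, set $\psi^*(z)=\sup_{y\in\R^d_+}\{z\cdot y-\psi(y)\}$, so that $\inf_{y\in\R^d_+}\{z\cdot(x-y)+\psi(y)\} = z\cdot x-\psi^*(z)$, and hence
\begin{align*}
    f(t,x)=\sup_{z\in\R^d_+}\big\{z\cdot x-\psi^*(z)+t\H(z)\big\}.
\end{align*}
Because $\psi=\sum_i\psi_i$ separates, $\psi^*(z)=\sum_i\psi_i^*(z_i)$ also separates, and each $\psi_i^*$ is convex, nondecreasing on the relevant range, and finite wherever $z_i$ lies below the Lipschitz constant of $\psi_i$ (outside that range the $\sup$ is not attained, which one must track). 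This representation as a pointwise supremum of functions that are affine in $x$ makes several properties transparent.

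The key steps, in order: \textbf{(i)} \emph{Lipschitz regularity and the pointwise values.} From the last display, $f(t,\cdot)$ is a supremum of functions $x\mapsto z\cdot x+\text{const}$ with $z$ ranging over a bounded set (bounded by the Lipschitz constants of the $\psi_i$, since for larger $z$ the bracket tends to $-\infty$ in a way that is eventually dominated — here one uses that $\H(z)=\prod z_i$ grows only polynomially while $-\psi_i^*(z_i)$ is eventually $-\infty$-steep, as $\psi_i$ has bounded slope); hence $f(t,\cdot)$ is Lipschitz in $x$ uniformly in $t$ on bounded time intervals, and Lipschitz in $t$ follows from the linear-in-$t$ dependence inside the $\sup$ together with the bound on $z$. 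One checks $f(0,x)=\sup_z\{z\cdot x-\psi^*(z)\}=\psi^{**}(x)=\psi(x)$ using convexity and lower semicontinuity of $\psi$, giving condition~\eqref{item:2}. \textbf{(ii)} \emph{Monotonicity.} Since the optimizing $z$ lies in $\R^d_+$, $f(t,\cdot)$ is a supremum of nondecreasing affine functions, hence nondecreasing — condition~(3). \textbf{(iii)} \emph{Partial convexity, condition~\eqref{item:4}.} Each term $z\cdot x-\psi^*(z)+t\H(z)$ is affine, hence (trivially) satisfies the discrete mixed second-difference inequality \eqref{eq:partial_convex_f} with equality; this property is preserved under taking suprema (the second mixed difference of a sup is $\geq$ the sup of second mixed differences is $\geq 0$), so $f(t,\cdot)$ satisfies \eqref{item:4}. \textbf{(iv)} \emph{The PDE a.e.} At a point of differentiability $(t,x)$, let $z^\star=z^\star(t,x)$ be an optimizer. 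Then $\nabla_x f(t,x)=z^\star$ and $\partial_t f(t,x)=\H(z^\star)=\H(\nabla f)$, by the envelope theorem: the upper bound $\partial_t f\leq \H(\nabla f)$, $\partial_t f\geq\H(\nabla f)$ follow from freezing the optimizer in one variable and perturbing the other, exactly as in the standard proof of the Hopf formula. One must argue the optimizer exists (coercivity from step (i)) and that it is unique a.e.\ (which holds at differentiability points of $f$, by strict convexity of $x\mapsto z\cdot x$ in $z$... more precisely by the standard subdifferential argument: any optimizer $z^\star$ lies in $\partial_x f(t,x)$, a singleton a.e.).

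\textbf{Main obstacle.} The genuine difficulty is \emph{boundary behavior on $\R^d_+$} rather than the interior computation. Two issues intertwine. First, in the Legendre transform $\psi_i^*$, the constraint $y\in\R_+$ (as opposed to all of $\R$) means $\psi_i^*$ can fail to be finite/coercive in the naive way, and the reflection trick used elsewhere in the paper (extending symmetrically) does not obviously commute with $\H(p)=\prod p_i$; one has to check the optimizing $z^\star$ genuinely stays in $\R^d_+$ and that near $\partial\R^d_+$ the formula still produces the correct Neumann-type (nondecreasing) behavior baked into condition~(3). Second, the PDE \eqref{eq:general_HJ_eqn} is only required a.e.\ in the \emph{open} region, so one needs to know that the non-differentiability set of $f$ is Lebesgue-null — true because $f(t,\cdot)$ is Lipschitz (Rademacher) and one can further check joint measurability/differentiability in $(t,x)$ off a null set via the convexity in $x$ plus the explicit $t$-dependence. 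I would handle the boundary by working first on the interior $\R_+\times(0,\infty)^d$, establishing \eqref{item:2}--\eqref{item:4} and the PDE there, and then extending continuity up to the boundary using the uniform Lipschitz bound from step (i); the monotonicity condition (3) is what encodes the boundary condition, so no separate boundary PDE argument is needed. Finally, I would invoke Proposition~\ref{Prop:uniq_sol_HJ} — noting its hypotheses \eqref{eq:hessian_H} hold since $\partial^2_{ij}\H(p)=\prod_{k\neq i,j}p_k\geq0$ on $\R^d_+$ — to conclude $f$ is \emph{the} weak solution.
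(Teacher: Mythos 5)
Your step (iii), establishing the partial convexity condition~\eqref{item:4}, contains a genuine error. You claim that the nonnegativity of the mixed second difference
\begin{align*}
g(x+\lambda e_i+\lambda e_j)+g(x)-g(x+\lambda e_i)-g(x+\lambda e_j)\geq 0
\end{align*}
is preserved under taking a pointwise supremum $g=\sup_\alpha g_\alpha$ of functions each satisfying it. This is false for $i\neq j$: unlike ordinary convexity, where the single negative term $-2g(x)$ can be matched to the maximizer at $x$, here the two negative terms sit at \emph{different} points $x+\lambda e_i$ and $x+\lambda e_j$, and no single index $\alpha$ simultaneously attains the sup at both. A concrete counterexample in $\R^2$: with $g_1(x)=x_1$, $g_2(x)=x_2$ (both affine, so satisfying \eqref{eq:partial_convex_f} with equality), the sup $g=\max(x_1,x_2)$ gives, at $x=0$, $\lambda=1$:
\begin{align*}
g(1,1)+g(0,0)-g(1,0)-g(0,1)=1+0-1-1=-1<0.
\end{align*}
So "the second mixed difference of a sup is $\geq$ the sup of second mixed differences" does not hold, and your argument for condition~\eqref{item:4} collapses. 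This is precisely where the separable structure of $\psi$ in \eqref{eq:def_general_psi} and the product form of $\H$ in \eqref{eq:def_general_H} are actually needed, not merely for cosmetic reasons. The paper's proof handles this via a two-case argument on the optimizers $z,z'$ at $x+\lambda e_1$ and $x+\lambda e_2$: when they are coordinatewise comparable, a direct comparison of the four values works; when they are not (so $(z_1-z_1')(z_2-z_2')<0$), it swaps coordinates between $z,z'$ (and between the inner $y,y'$) to build feasible competitors for $f(t,x+\lambda e_1+\lambda e_2)$ and $f(t,x)$, exploiting that $\psi(y)+\psi(y')=\psi(\tilde y)+\psi(\tilde y')$ under separability and that $\H(\tilde z)+\H(\tilde z')-\H(z)-\H(z')=-(z_1-z_1')(z_2-z_2')\prod_{k\geq3}z_k\geq 0$ under the product form. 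Your steps (i), (ii), and (iv) are in line with the paper's approach (the paper also pauses to verify the restricted-domain Fenchel--Moreau identity $\psi^{**}=\psi$, which you take somewhat for granted, and it additionally proves a semigroup property, though that is not strictly required by Definition~\ref{def:weak_sol}), but step (iii) needs to be replaced entirely with an argument that uses the specific structure of $\psi$ and $\H$.
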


The conditions in Proposition~\ref{Prop:hopf_formula} might be restrictive, but are sufficient enough for our purpose. We prove the uniqueness in Section~\ref{section:uniq_sol} and show that the Hopf formula is a weak solution in Section~\ref{section:hopf_sol}. Before proceeding to proofs, let us mention a quick remark.

\begin{Rem}[Comparison principle and Lipschitz coefficients]
By a slight modification to the proof of Proposition~\ref{Prop:uniq_sol_HJ} (instead of \eqref{eq:phi_uniqueness}, let $\phi(z)=0$ for $z\leq \delta(\|f\|_\mathrm{Lip}+\|g\|_\mathrm{Lip})$ and positive otherwise), we can obtain the following comparison principle: suppose $f$ and $g$ are two weak solutions with $f(0,\cdot)\leq g(0,\cdot)$, then $f\leq g$ almost  everywhere. Using this principle and comparing $f$ with $f(\cdot, \cdot+h)$ for $h\in\R_+^2$, we can obtain
\begin{align*}
    \|f(t,\cdot)\|_\mathrm{Lip} = \|f(0,\cdot)\|_\mathrm{Lip},\quad \forall t\geq 0.
\end{align*}
Writing $\psi = f(0,\cdot)$ and using \eqref{eq:general_HJ_eqn}, we can conclude
\begin{align}\label{eq:lip}
    \|f\|_\mathrm{Lip}\leq \|\psi\|_\mathrm{Lip}\ \vee\ \bigg( \sup_{|p|\leq \|\psi\|_\mathrm{Lip}}\big|\H(p)\big|\bigg).
\end{align}
\end{Rem}

\subsection{Proof of Proposition~\ref{Prop:uniq_sol_HJ}}\label{section:uniq_sol}
The idea of this proof can be seen in \cite[Section 3.3.3]{evans2010partial}. The difference is that here $\H$ is not convex. Instead, we will utilize the fact that all entries in the Hessian of $\H$ are nonnegative as in \eqref{eq:hessian_H}, and the partial convexity \eqref{eq:2nd_mixed_derivatives_f}.

\smallskip

Let $f$ and $g$ be weak solutions to \eqref{eq:general_HJ_eqn}. Set $w = f-g$. Then we have
\begin{align*}
    \partial_t w & = \H(\nabla f)-\H(\nabla g)  = b\cdot \nabla w.
\end{align*}
where the vector $b$ is given by
\begin{align*}
    b = \int_0^1 \nabla\H\big(r \nabla f - (1-r)\nabla g\big)\d r.
\end{align*}
Take $v = \phi(w)$ for some smooth function $\phi:\R \to \R_+$ to be chosen later. Hence, we have
\begin{align}\label{eq:v_eqn}
    \partial_t v = b \cdot \nabla v.
\end{align}
To proceed, we regularize $f$, $g$ and $b$. Note that they are defined for $(t,x)\in\R_+\times \R^d_+$. Recall the mollifier $\xi_\eps$ introduced in Remark~\ref{Rem:infinity_norm}, and extend it to $\R^d$ in the obvious way. Let $f_\eps = f*\xi_\eps$, $g_\eps = g*\xi_\eps$ and
\begin{align}\label{eq:def_b_eps}
    b_\eps = \int_0^1 \nabla\H\big(r \nabla f_\eps + (1-r)\nabla g_\eps\big)\d r.
\end{align}
Here $*$ denotes the convolution in $x$. Note that these regularized versions are well-defined for $(t,x)\in\R_+\times \R^d_{\eps}$, where $\R_{\eps}=[\eps,\infty)$.

On $\R_+\times \R^d_{\eps}$, the equation \eqref{eq:v_eqn} can be expressed as
\begin{align}\label{eq:partial_t_v}
    \partial_t v = \div( vb_\eps) - v\div b_\eps +(b-b_\eps)\cdot \nabla v.
\end{align}
Before proceeding further, we need to estimate some terms in this display. 

\medskip

Due to \eqref{eq:2nd_mixed_derivatives_f} and the fact that $f$ and $g$ are nondecreasing, we have, for all $(t,x)\in\R_+\times \R^d_{\eps}$ and all $1\leq i,j\leq d$,
\begin{align*}
    \partial_if_\eps(t,x),\ \partial_ig_\eps(t,x),\ \partial^2_{ij}f_\eps(t,x),\  \partial^2_{ij}g_\eps(t,x)\geq 0.
\end{align*}
Using \eqref{eq:hessian_H} and the above display, we obtain, for $(t,x)\in\R_+\times \R^d_{\eps}$,
\begin{align}\label{eq:div_b_eps_geq_0}
    \div b_\eps =\int_0^1 \nabla^2\H\big(r \nabla f_\eps + (1-r)\nabla g_\eps\big)\cdot \big(r \nabla^2 f_\eps + (1-r)\nabla^2 g_\eps\big)\d r \geq 0.
\end{align}
Here $\nabla^2$ stands for the Hessian.
By the definitions of $f_\eps$ and $g_\eps$, we also have
\begin{align}\label{eq:grad_f_eps_bound}
    |\nabla f_\eps |\leq \|f\|_\mathrm{Lip},\quad |\nabla g_\eps |\leq \|g\|_\mathrm{Lip}.
\end{align}

Let us set
\begin{align}\label{eq:def_R_in_HJ_section}
    R&=1+ \sup\big\{|\nabla\H(p)|:p\in\R^d_+,\ |p|\leq \|f\|_\mathrm{Lip}\vee \|g\|_\mathrm{Lip}\big\}.
\end{align}
Fix any $T, \eta>0$ and define, for $t\in[0,T]$,
\begin{align}
    D_t& = \{x\in\R^d_+: |x|\leq R(T-t)\}\cap [\eta,\infty)^d,\label{eq:D_t_uniqueness}\\
    \Gamma_t &= \partial D_t \cap \{|x|=R(T-t)\}.\nonumber
\end{align}

Now, let us introduce
\begin{align*}
    J(t) = \int_{D_t}v(t,x)\d x.
\end{align*}
Let $\eps<\eta$ to ensure $D_t\subset \R^d_\eps$. Using \eqref{eq:partial_t_v} and integration by parts, we can compute 
\begin{align*}
    \frac{\d}{\d t}J(t)& = \int_{D_t}\partial_t v  - R \int_{\Gamma_t} v\\
    & = \int_{\Gamma_t}(\mathbf{n}\cdot b_\eps - R)v+ \int_{\partial D_t \setminus \Gamma_t} (\mathbf{n}\cdot b_\eps)v + \int_{D_t} v(-\div b_\eps) +\int_{D_t}(b-b_\eps)\cdot \nabla v,
\end{align*}
where $\mathbf{n}$ stands for the outer normal vector. We treat the integrals after the second equality separately. 
Due to \eqref{eq:def_b_eps}, \eqref{eq:grad_f_eps_bound} and \eqref{eq:def_R_in_HJ_section}, the first integral is nonpositive. 

For the second integral, note that on $\partial D_t \setminus \Gamma_t$, we have $-\mathbf{n}\in\R^d_+$. By \eqref{eq:def_general_H} and $f,g$ being nondecreasing, we can infer from the definition of $b_\eps$ that $b_\eps \in \R^d_+$ on $\partial D_t\setminus\Gamma_t$. 

In view of \eqref{eq:div_b_eps_geq_0}, the third integral is again nonpositive, while the last one is $o_\eps(1)$. Therefore, taking $\eps\to 0$, we conclude that
\begin{align}\label{eq:dJ_leq_0}
    \frac{\d }{\d t}J(t)\leq 0.
\end{align}

Since $w(0,x) = f(0,x)-g(0,x)=0$, for each $\delta>0$, we have $\|w(\delta, \cdot)\|_\infty \leq \delta(\|f\|_\mathrm{Lip}+\|g\|_\mathrm{Lip})$. Let us choose $\phi$ to satisfy 
\begin{align}
    \begin{cases}\label{eq:phi_uniqueness}
    \phi(z) = 0,&\quad \text{if }|z|\leq \delta(\|f\|_\mathrm{Lip}+\|g\|_\mathrm{Lip}),\\
    \phi(z)>0, &\quad \text{otherwise}.
    \end{cases}
\end{align}
Therefore, due to $v=\phi(w)$, we have
\begin{align*}
    J(\delta) = \int_{D_\delta}v(\delta,x)\d x =\int_{D_\delta}\phi(w(\delta ,x ))\d x = 0.
\end{align*}
Since $J(t)$ is nonnegative, \eqref{eq:dJ_leq_0} implies that $J(t)=0$ for all $t\in[\delta, T]$. This together with the definition of $\phi$ guarantees that
\begin{align*}
    |f(t,x)-g(t,x)|\leq \delta(\|f\|_\mathrm{Lip}+\|g\|_\mathrm{Lip}), \quad \forall x\in D_t,\ \forall t\in[\delta, T].
\end{align*}
Recall the definition of $D_t$ in \eqref{eq:D_t_uniqueness} which depends on $T$ and $\eta$.
Taking $\delta\to0$, $\eta\to0$ and $T\to \infty$, we conclude that $f=g$.

\subsection{Proof of Proposition~\ref{Prop:hopf_formula}}\label{section:hopf_sol}
Since \eqref{eq:def_general_H} satisfies \eqref{eq:hessian_H}, uniqueness is ensured by Proposition~\ref{Prop:uniq_sol_HJ}.

Let us rewrite the Hopf formula as
\begin{align}\label{eq:f_Hopf}
\begin{split}
    f(t,x) &= \sup_{z\in \R^d_+}\inf_{y\in \R^d_+}\big\{z\cdot (x-y) +\psi(y) +t \H(z)\big\}\\
    & = \sup_{z\in \R^d_+}\{z\cdot x - \psi^*(z)+t\H(z)\}\\
    & = (\psi^*-t\H)^*(x).
\end{split}
\end{align}
Here the superscript $*$ denotes the Fenchel transformation over $\R^d_+$, namely,
\begin{align}\label{eq:def_u*}
    u^*(x) = \sup_{y\in \R^d_+}\{y\cdot x - u(y)\},\quad x\in \R^d_+.
\end{align}

In the following, we verify \eqref{eq:f_Hopf} is a weak solution. Since the supremum in \eqref{eq:f_Hopf} is taken over $\R^d_+$, it is clear that $f(t,\cdot)$ is nondecreasing. We will check $f$ satisfies the following in order: initial condition, semigroup property (or dynamic programming principle), Lipschitzness, satisfying \eqref{eq:general_HJ_eqn} almost everywhere, and partial convexity.

\subsubsection{Verification of the initial condition}
The goal is to show
\begin{align}\label{eq:initial_condition}
    \psi(x) = \sup_{z\in \R^d_+}\inf_{y\in \R^d_+}\big\{z\cdot (x-y) +\psi(x) \big\}=\psi^{**}(x),\quad x\in\R^d_+.
\end{align}
Due to the assumption on $\psi$, \eqref{eq:initial_condition} follows from the following lemma.

\begin{Lemma}[Fenchel--Moreau identity]\label{lemma:biconjugate}
Let $u:\R^d_+\to (-\infty,+\infty]$ be a function not identically equal to $+\infty$. Then, $u^{**}=u$ if and only if $u$ is convex, l.s.c.\ (lower semi-continuous), and nondecreasing.
\end{Lemma}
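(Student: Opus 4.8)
The plan is to establish the two implications separately. The easy direction is necessity: if $u^{**}=u$, then $u$ inherits all three properties from $u^{**}$, so it suffices to observe that any biconjugate $u^{**}$ is automatically convex, lower semi-continuous, and nondecreasing. Convexity and lower semi-continuity hold because $u^{**}(x)=\sup_{z\in\R^d_+}\{z\cdot x-u^*(z)\}$ is a pointwise supremum of affine (hence convex, continuous) functions of $x$; monotonicity holds because each map $x\mapsto z\cdot x-u^*(z)$ with $z\in\R^d_+$ is nondecreasing on $\R^d_+$, and the supremum of nondecreasing functions is nondecreasing. (One should also note $u^*$ is not identically $+\infty$, using that $u\not\equiv+\infty$ and --- if needed --- a mild properness hypothesis; I would remark that if $u$ ever takes the value $-\infty$ the statement degenerates, and otherwise $u^*>-\infty$ everywhere it matters.)

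For sufficiency, assume $u$ is convex, l.s.c., and nondecreasing; I want $u^{**}=u$. One inequality is general and elementary: for every $x$, $u^{**}(x)\le u(x)$, because for any $z\in\R^d_+$ the definition of $u^*$ gives $u^*(z)\ge z\cdot y-u(y)$ for all $y$, in particular $u^*(z)\ge z\cdot x-u(x)$, hence $z\cdot x-u^*(z)\le u(x)$; taking the sup over $z$ yields the claim. The reverse inequality $u(x)\le u^{**}(x)$ is the substantive part. The standard route is a separation argument: fix $x_0\in\R^d_+$ and any real $\lambda<u(x_0)$; since $u$ is convex and l.s.c., its epigraph $\operatorname{epi} u=\{(y,s)\in\R^d_+\times\R: s\ge u(y)\}$ is a closed convex set not containing the point $(x_0,\lambda)$, so the Hahn--Banach separation theorem produces an affine functional strictly below $(x_0,\lambda)$ and at or above $\operatorname{epi} u$; that is, there are $z\in\R^d$ and $c\in\R$ with $z\cdot y+c\le u(y)$ for all $y\in\R^d_+$ and $z\cdot x_0+c>\lambda$. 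This exhibits a lower affine support, which after rearrangement gives $u^*(-\,\cdot)$-type control and hence $u^{**}(x_0)\ge z\cdot x_0+c>\lambda$; letting $\lambda\uparrow u(x_0)$ finishes it.

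The one genuine obstacle --- and the reason the hypothesis ``nondecreasing'' appears --- is that our Fenchel transform is taken over the cone $\R^d_+$ rather than all of $\R^d$, so the separating $z$ produced by Hahn--Banach need not lie in $\R^d_+$, whereas the sup defining $u^{**}$ only ranges over $z\in\R^d_+$. The fix is to show the slope $z$ can be chosen in $\R^d_+$: if some coordinate $z_i<0$, then since $u$ is nondecreasing the affine minorant $y\mapsto z\cdot y+c$ can be replaced by the one obtained by setting $z_i=0$ (moving $y_i\to+\infty$ in $z\cdot y+c\le u(y)$ would force $z_i\le$ the ``slope at infinity'' of $u$, which is $\ge 0$; more carefully, one checks $\max(z_i,0)$ still yields a valid global minorant because increasing the slope in a direction where $u$ is nondecreasing keeps the inequality on $\R^d_+$ at the relevant points, or one argues directly that the truncated functional still separates $(x_0,\lambda)$ since $x_0\in\R^d_+$). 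Once $z\in\R^d_+$ is secured, $u^*(z)\le -c$ (again using $z\in\R^d_+$ in the defining sup), so $u^{**}(x_0)\ge z\cdot x_0-u^*(z)\ge z\cdot x_0+c>\lambda$, completing the proof. I would also handle the boundary/at-infinity edge cases (e.g.\ $u(x_0)=+\infty$) by a routine truncation of $\lambda$, and note the degenerate case where $\operatorname{epi} u$ cannot be properly separated does not occur here because $u$ is finite somewhere and proper.
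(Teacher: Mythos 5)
Your proof is correct in substance, and it takes a genuinely different route from the paper's. The paper factors the sufficiency direction through the classical Fenchel--Moreau theorem: it extends $u$ to all of $\R^d$ by $+\infty$, invokes $u=u^{\circledast\circledast}$ for the full-space conjugate as a black box, observes the inner supremum over $y$ is automatically over $\R^d_+$, and then proves the identity $u^{\circledast}(z)=u^{\circledast}(z\vee 0)$ to restrict the outer supremum over $z$ to $\R^d_+$. You instead rerun the epigraph-separation argument from scratch and then argue that the separating slope $z$ may be replaced by $z\vee 0$. The two routes handle the same essential obstacle (the conjugate variable must live in the cone) with the same underlying idea --- monotonicity of $u$ lets you zero out negative coordinates of $z$ --- and both are valid. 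The paper's version is more modular and shorter; yours is more self-contained but requires more care with the separation step (you should also handle the degenerate case of a ``vertical'' separating hyperplane, which can occur when $x_0$ is outside the effective domain of $u$, by first establishing the affine-minorant property on the interior of the domain).

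One place where your write-up should be tightened: the parenthetical ``increasing the slope in a direction where $u$ is nondecreasing keeps the inequality'' is misleading as a general principle (arbitrarily increasing $z_i$ would of course break the inequality $z\cdot y+c\le u(y)$). The correct argument, which you gesture at but do not quite state, is the following: if $z_i<0$, set $\tilde z$ equal to $z$ with $\tilde z_i=0$, and for $y\in\R^d_+$ write $\hat y$ for $y$ with its $i$-th coordinate replaced by $0$; then $\tilde z\cdot y+c=z\cdot\hat y+c\le u(\hat y)\le u(y)$, the last step using that $u$ is nondecreasing and $\hat y\le y$. This is precisely the paper's mechanism for $u^{\circledast}(z)=u^{\circledast}(z\vee 0)$, transposed into the separation setting, and once made explicit your argument is complete.
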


\begin{proof}
Let $u^{**}=u$. Note that for any function $v$, we know $v^*$ is convex and l.s.c. Since now the supremum is taken over $\R^d_+$, we have $v^*$ is nondecreasing. Hence, we must have $u$ is convex, l.s.c., and nondecreasing.

\smallskip

Let $u$ be convex, l.s.c and nondecreasing, and we want to show $u^{**}=u$.
Extend the domain of $u$ to $\R^d$ by setting $u(x)=\infty$ for $x\not \in \R^d_+$. Let $\circledast$ stand for the Fenchel transformation over $\R^d$, that is,
\begin{align*}
    u^\circledast(x) = \sup_{y\in\R^d}\{y\cdot x- u(y)\}.
\end{align*}
The convexity and lower semi-continuity of $u$ yields
\begin{align}\label{eq:u_biconjugate_whole_plane}
    u(x) = u^{\circledast\circledast}(x) = \sup_{z\in\R^d}\inf_{y\in\R^d}\{z\cdot(x-y)+u(y)\} = \sup_{z\in\R^d}\{z\cdot x - u^\circledast(z)\}.
\end{align}
The goal is to show that $\R^d $ in the above display can be replaced by $\R^d_+$ whenever $x\in\R^d_+$. By our extension of $u$, we must have 
\begin{align}\label{eq:psi*_formula}
    u^\circledast(z) = \sup_{y\in\R^d_+}\{y\cdot z - u(y)\}.
\end{align}
It remains to verify that $z\in \R^d$ in \eqref{eq:u_biconjugate_whole_plane} can be replaced by $z\in \R^d_+$ whenever $x\in\R^d_+$.

We claim that 
\begin{align}\label{eq:psi*(z)=psi*(zv0)}
    u^\circledast(z)=u^\circledast(z\vee 0),\quad \forall z\in\R^d,
\end{align}
where $z\vee 0 = (z_1\vee 0, z_2\vee 0,\dots, z_d\vee 0)$. For two vectors $x,x'\in\R^d$, we write $x\geq x'$ if $x-x'\in \R^d_+$. Indeed, if \eqref{eq:psi*(z)=psi*(zv0)} is true, then due to 
\begin{align*}
    x\cdot z \leq x\cdot (z\vee 0), \quad \forall x \in \R^d_+,
\end{align*}
we have
\begin{align*}
    \sup_{z\in\R^d}\{x\cdot z - u^\circledast(z)\}\leq \sup_{z\in\R^d}\{x\cdot (z\vee0) - u^\circledast(z\vee 0)\}= \sup_{z\in\R^d_+}\{x\cdot z- u^\circledast(z)\}.
\end{align*}
From this,  \eqref{eq:u_biconjugate_whole_plane} and \eqref{eq:psi*_formula}, we can deduce that $u=u^{**}$.

\smallskip

Now, let us verify \eqref{eq:psi*(z)=psi*(zv0)}.
Let $z\in\R^d$. For $y\in\R^d_+$, we write
\begin{align*}
    \tilde y = \big(y_1\Ind{z_1\geq 0},\ y_2\Ind{z_2\geq 0},\dots,\  y_d\Ind{z_d\geq 0}\big).
\end{align*}
Using this notation, we have
\begin{align}\label{eq:z.y_leq_z.tilde_y}
    z\cdot y  \leq (z\vee 0)\cdot y  = z\cdot \tilde y,\quad \forall y\in\R^d_+.
\end{align}
Since $u$ is non-decreasing, we also have $u (y) \geq u(\tilde y)$. Using this and \eqref{eq:z.y_leq_z.tilde_y} repeatedly, we can obtain
\begin{align*}
    \sup_{y\in\R^d_+}\{z\cdot y-u(y)\}\leq \sup_{y\in\R^d_+}\{(z\vee 0)\cdot y-u(y)\}\\
     \leq \sup_{y\in\R^d_+}\{z\cdot \tilde y-u(\tilde y)\} \leq \sup_{y\in\R^d_+}\{z\cdot y-u(y)\}.
\end{align*}
Here in the last inequality we used the fact that $\{\ty: y\in \R^d_+\} \subset \R^d_+$. From the above display, we can deduce
\begin{align*}
    \sup_{y\in\R^d_+}\{z\cdot y-u(y)\} = \sup_{y\in\R^d_+}\{(z\vee 0)\cdot y-u(y)\}.
\end{align*}
This display along with \eqref{eq:psi*_formula} implies \eqref{eq:psi*(z)=psi*(zv0)}.
\end{proof}

\subsubsection{Semigroup property}

Let $f$ be given in \eqref{eq:f_Hopf}. We want to show, for all $s\geq 0$,
\begin{align*}
    f(t+s,x) &= \sup_{z\in\R^d_+}\inf_{y\in\R^d_+}\big\{z\cdot(x-y)+f(t,y) +s\H(z)\big\}.
\end{align*}
In view of the Hopf formula \eqref{eq:f_Hopf}, this is equivalent to
\begin{align}\label{eq:semigroup_property}
    \big(\psi^* - (t+s)\H\big)^* = \big((\psi^*-t\H)^{**}-s\H\big)^*.
\end{align}
From the definition of the Fenchel transform \eqref{eq:def_u*}, it can seen that, for any $u$, 
\begin{align}\label{eq:u**leq_u}
    u^{**}\leq u.
\end{align}
Since the Fenchel transform is order-reversing, \eqref{eq:u**leq_u} implies that
\begin{align}\label{eq:semigruop_one_inequality}
    \big((\psi^*-t\H)^{**}-s\H\big)^*\geq \big(\psi^* - (t+s)\H\big)^* .
\end{align}

\smallskip

To see the other direction, we use \eqref{eq:u**leq_u} to get
\begin{align*}
    \frac{s}{t+s}\psi^* + \frac{t}{t+s}\big(\psi^*-(t+s)\H\big)^{**}\leq \psi^*-t\H.
\end{align*}
For any $u$, it can be readily checked that $u^*$ is convex and lower semi-continuous. Since the supremum in the definition of $u^*$ is taken over $\R^d_+$, we can deduce that $u^*$ is non-decreasing. Therefore, taking the Fenchel transform twice in the above display and applying Lemma~\ref{lemma:biconjugate}, we have
\begin{align*}
    \frac{s}{t+s}\psi^* + \frac{t}{t+s}\big(\psi^*-(t+s)\H\big)^{**}\leq (\psi^*-t\H)^{**}.
\end{align*}
Reorder terms and then use \eqref{eq:u**leq_u} to see
\begin{align*}
    \big(\psi^*-(t+s)\H\big)^{**} - (\psi^*-t\H)^{**}\leq \frac{s}{t}\Big((\psi^*-t\H)^{**}-\psi^*\Big)\leq -s\H.
\end{align*}
This immediately gives
\begin{align*}
    \big(\psi^*-(t+s)\H\big)^{**}\leq (\psi^*-t\H)^{**}-s\H.
\end{align*}
Taking the Fenchel transform on both sides and invoking Lemma~\ref{lemma:biconjugate}, we have
\begin{align*}
     \big(\psi^* - (t+s)\H\big)^*\geq  \big((\psi^*-t\H)^{**}-s\H\big)^*.
\end{align*}
This together with \eqref{eq:semigruop_one_inequality} verifies \eqref{eq:semigroup_property}.

\subsubsection{Lipschitzness}

Since $\psi$ is Lipschitz, we have $\psi^*(z)=\infty$ outside the compact set $\{|z|\leq \|\psi\|_{\mathrm{Lip}}\}$. This together with \eqref{eq:f_Hopf} implies that for each $x\in\R^d_+$, there is $z\in \R^d_+$ with $|z|\leq \|\psi\|_\mathrm{Lip}$ such that
\begin{align*}
    f(t,x) = z\cdot x - \psi^*(z)+t\H(z).
\end{align*}
This yields that, for any $x'\in\R^d_+$,
\begin{align*}
    f(t,x) -f(t,x')\leq z\cdot(x-x') \leq \|\psi\|_\mathrm{Lip}|x-x'|.
\end{align*}
By symmetry, we conclude that $f$ is Lipschitz in $x$, and the Lipschitz coefficient is uniform in $t$.

To show Lipschitzness in $t$, we fix any $x\in\R^d_+$. Then, we have, for some $z\in\R^d_+$ with $|z|\leq \|\psi\|_\mathrm{Lip}$,
\begin{align*}
    f(t,x)&=z\cdot x-\psi^*(z)+t\H(z)\leq f(t',x) + (t-t')\H(z)\\
    &\leq f(t',x) +|t'-t|\bigg(\sup_{|z|\leq \|\psi\|_\mathrm{Lip}}|\H(z)|\bigg)\leq f(t',x)+\|\psi\|_\mathrm{Lip}^d|t'-t|.
\end{align*}
Here in the last inequality, we used the expression of $\H$ in \eqref{eq:def_general_H}. Again by symmetry, Lipschitzness in $t$ is obtained.

\subsubsection{Hopf formula satisfies \eqref{eq:general_HJ_eqn}} 
Due to Rademacher's theorem, Lipschitzness of $f$ implies that $f$ is differentiable almost everywhere. 

We want to verify that \eqref{eq:f_Hopf} satisfies \eqref{eq:general_HJ_eqn} almost everywhere. Let $(t,x)$ be a point at which $f$ is differentiable. We can assume $(t,x)$ to satisfy $t, x_1, x_2,\dots,x_d>0$, because otherwise $(t,x)$ belongs to a Lebesgue measure zero set. Since $f(t,\cdot)$ is Lipschitz, we know that outside a compact set $f^*(t,\cdot)$ is infinity. Therefore, there is $\bar z\in\R^d_+$ such that
\begin{align}\label{eq:bar_z}
    f(t,x) = \bar z\cdot x - \psi^*(\bar z) + t\H(\bar z).
\end{align}
Then using \eqref{eq:f_Hopf}, we have, for $s\geq 0$ and $h\in \R^d$ sufficiently small,
\begin{align}\label{eq:f(t,x)_upper_bound_verify_HJ}
    f(t,x) \leq f(t-s,x+h)-\bar z\cdot h+s\H(\bar z).
\end{align}
Set $s=0$ and vary $h$ to see
\begin{align}\label{eq:z=grad_f}
    \bar z = \nabla f(t,h).
\end{align}
Then, we set $h=0$ in \eqref{eq:f(t,x)_upper_bound_verify_HJ}, take $s\to 0$ and insert \eqref{eq:z=grad_f} to obtain
\begin{align*}
    \partial_t f(t,x)\leq H(\nabla f(t,h)).
\end{align*}

To verify the other direction, we use \eqref{eq:f_Hopf} and \eqref{eq:bar_z} to see that, for $s\geq 0$,
\begin{align*}
    f(t+s,x)& \geq \bar z \cdot x - \psi^*(\bar z) +(t+s)\H(\bar z)\\
    & = f(t,x )+s\H(\bar z).
\end{align*}
Send $s\to 0$ and use \eqref{eq:z=grad_f} to see
\begin{align*}
    \partial_t f(t,x) \geq \H(\nabla f(t,x)).
\end{align*}

\subsubsection{Partial convexity}

The case $i=j$ can be deduced from the convexity of $f$ which is evident from the Hopf formula \eqref{eq:f_Hopf}.

Now consider the case $i\neq j$. By relabeling, we may assume $i=1$ and $j=2$.

The Lipschitzness of $\psi$ implies that $\psi^*$ is $\infty$ outside a compact set. Due to \eqref{eq:f_Hopf}, there are $z,z'$ such that
\begin{align}\label{eq:z_z'_property}
\begin{split}
    f(t,x+\lambda e_1) &= z\cdot (x+\lambda e_1) - \psi^*(z) + t\H(z),\\
    f(t,x+\lambda e_2) &= z'\cdot (x+\lambda e_2) - \psi^*(z') + t\H(z').
\end{split}
\end{align}

Case 1: $(z_1,z_2)\leq (z'_1,z'_2)$ or $(z_1,z_2)\geq (z'_1,z'_2)$. Let us only treat the latter case. The other case can be done in an analogous way. Using \eqref{eq:f_Hopf}, we have
\begin{align*}
    f(t,x+\lambda e_1+\lambda e_2) &\geq z\cdot (x+\lambda e_1+\lambda e_2)-\psi^*(z) + t\H(z),\\
    f(t, x) &\geq z'\cdot x-\psi^*(z') + t\H(z').
\end{align*}
This along with \eqref{eq:z_z'_property} implies that the left hand side of \eqref{eq:partial_convex_f} is bounded below by
\begin{align*}
   \lambda  z\cdot  e_2 -\lambda z'\cdot e_2=\lambda (z_2 -z'_2)\geq 0.
\end{align*}

Case 2: neither $(z_1,z_2)\leq (z'_1,z'_2)$ nor $(z_1,z_2)\geq (z'_1,z'_2)$. This condition implies that
\begin{align}\label{eq:(z_1-z'_1)(z_2-z'_2)<0}
    (z_1-z'_1)(z_2-z'_2)< 0.
\end{align}
Let $\tz =(z_1,z'_2,z_3,\dots,z_d)$ and $\tz' = (z'_1, z_2,z'_3,\dots,z'_d)$. In other words, $\tz$ is obtained from $z$ through replacing $z_2$ by $z'_2$, and similarly for $\tz'$. By \eqref{eq:f_Hopf}, for each $\delta>0$, there are $y,y'\in\R^d_+$ such that
\begin{align}\label{eq:partial_convex_lower_bound_1}
\begin{split}
    f(t,x+\lambda e_1+\lambda e_2) &\geq \tz \cdot (x+\lambda e_1+\lambda e_2-y)+\psi(y) +t\H(\tz)-\delta,\\
    f(t,x) &\geq \tz' \cdot (x-y')+\psi(y') +t\H(\tz')-\delta.
\end{split}
\end{align}
Using the same construction for $\tz,\tz'$, we set
\begin{align*}
    \ty =(y_1,y'_2,y_3,\dots,y_d),\quad \ty' = (y'_1, y_2, y'_3,\dots,y'_d).
\end{align*}
Note that
\begin{align}\label{eq:tz_ty_z_y_relation}
    \tz\cdot y+\tz'\cdot y' - z\cdot \ty - z'\cdot \ty' =0.
\end{align}
From \eqref{eq:z_z'_property}, we also have
\begin{align}\label{eq:partial_convex_lower_bound_2}
\begin{split}
     f(t,x+\lambda e_1) &\leq  z \cdot (x+\lambda e_1-\ty)+\psi(\ty) +t\H(z),\\
    f(t,x+\lambda e_2) &\leq z' \cdot (x+\lambda e_2-\ty')+\psi(\ty') +t\H(z').
\end{split}
\end{align}
To lower bound the left hand side of \eqref{eq:partial_convex_f}, we start by observing that, due to \eqref{eq:tz_ty_z_y_relation},
\begin{align*}
    &\tz \cdot (x+\lambda e_1+\lambda e_2-y) + \tz' \cdot (x-y') - z \cdot (x+\lambda e_1-\ty) - z'\cdot (x+\lambda e_2-\ty')\\
    &=(\tz+\tz'-z-z')\cdot x - (\tz\cdot y+\tz'\cdot y' - z\cdot \ty - z'\cdot \ty')+\lambda(z_1+z_2'-z_1-z'_2)=0.
\end{align*}
This along with \eqref{eq:partial_convex_lower_bound_1} and \eqref{eq:partial_convex_lower_bound_2} implies that the left hand side of \eqref{eq:partial_convex_f} can be bounded below by
\begin{align*}
    \psi(y)+\psi(y')- \psi(\ty)-\psi(\ty')+t\big(\H(\tz) + \H(\tz') - \H(z) -\H(z')\big)-2\delta.
\end{align*}
From \eqref{eq:def_general_psi}, we can see
\begin{align*}
    \psi(y)+\psi(y') = \psi(\ty ) +\psi(\ty').
\end{align*}
Lastly, due to \eqref{eq:(z_1-z'_1)(z_2-z'_2)<0} and the definition of $\H$ in \eqref{eq:def_general_H}, we can compute
\begin{align*}
    \H(\tz)+\H(\tz') - \H(z)-\H(z')= -(z_1-z'_1)(z_2-z'_2)z_3\dots z_d\geq 0.
\end{align*}
The above three displays imply that the left hand side of \eqref{eq:partial_convex_f} is bounded below by $-2\delta$. The desired result follows by setting $\delta \to 0$.

\section{Convergence of the free energy}\label{section:cvg_fre_energy}

The goal is to prove Theorem~\ref{Prop:general_cvg_bF} and other convergence results to be stated. The method is similar to the one employed in Section~\ref{section:uniq_sol}.

\smallskip

Let us state the most general result in this paper. For $c\geq 0$, define $\mathcal{A}(c)$ to be the class of all functions $\psi:\R^2_+\to\R$ satisfying the following properties:
\begin{itemize}
    \item $\psi(0)=0$ and $\|\psi\|_\mathrm{Lip}\leq c$;
    \item there is a unique weak solution $f$ to the Hamilton--Jacobi equation \eqref{eq:general_2d_HJ} with initial condition $\psi$.
\end{itemize}
Recall the definitions in \eqref{eq:def_K}-\eqref{eq:def_L}.

\begin{Th}\label{thm:most_general_cvg}
Let $c\geq 0$. There is $C>0$ such that the following holds for all $M\geq 1$, all $n\in\N$ and all $\psi\in\mathcal{A}(c)$,
\begin{align*}
    &\sup_{t\in [0,M]}\int_{[0,M]^2}\big|\bF_n(t,h)-f(t,h)\big|\d h \leq CM^2\Big(L_{\psi,CM,n}+n^{-1}+ (K_{CM,n})^\frac{2}{3}+K_{CM,n}\Big),
\end{align*}
where $f$ is the unique weak solution to \eqref{eq:general_2d_HJ} with $f(0,\cdot) = \psi$.
\end{Th}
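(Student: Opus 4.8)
The plan is to adapt the energy method from the uniqueness proof in Section~\ref{section:uniq_sol} to the comparison between $\bF_n$ and $f$, using Lemma~\ref{lemma:HJ} to control the defect by which $\bF_n$ fails to solve \eqref{eq:general_2d_HJ}. Set $w = \bF_n - f$ on $[0,M]^3$. From Lemma~\ref{lemma:HJ} we have $\partial_t \bF_n = (\partial_{\hone}\bF_n)(\partial_{\htwo}\bF_n) + \mathcal{E}_n$ with a defect $|\mathcal{E}_n| \le \frac{1}{2N}\Delta\bF_n + \frac{1}{2}\E|\nabla(F_n-\bF_n)|^2$, while $f$ solves the equation a.e. Subtracting and writing the difference of the nonlinear terms as $b\cdot\nabla w$ for a suitable averaged vector field $b = \int_0^1 \nabla\H(r\nabla\bF_n + (1-r)\nabla f)\,\d r$ (using $\H(p)=p_1p_2$), we get a linear transport inequality $\partial_t w = b\cdot\nabla w + \mathcal{E}_n$. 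Since both $\nabla\bF_n$ and $\nabla f$ have nonnegative entries (by \eqref{eq:1st_2nd_d_bF_lower_bound} and the monotonicity clause of Definition~\ref{def:weak_sol}), the vector $b$ has nonnegative entries; and since the mixed second derivatives of $\bF_n$ and of $f*\xi_\eps$ are nonnegative (by \eqref{eq:1st_2nd_d_bF_lower_bound} and \eqref{eq:2nd_mixed_derivatives_f}), one gets $\div b \ge 0$ after mollification, exactly as in \eqref{eq:div_b_eps_geq_0}. This is what makes the boundary and divergence terms in the integration-by-parts have the right sign.

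Next I would run the mollification and the moving-domain argument. Regularize $\bF_n$, $f$ and $b$ by convolution $\xi_\eps$ in $h$; define shrinking domains $D_t = \{h : |h| \le R(M-t)\} \cap [\eta,\infty)^2$ with $R$ an upper bound on $|\nabla\H|$ over the relevant ball (finite because of the uniform Lipschitz bounds \eqref{eq:grad_bF<C} and \eqref{eq:lip}), and consider $J(t) = \int_{D_t}\phi(w(t,h))\,\d h$ for a convex nonnegative $\phi$, say $\phi(z)=z^2$ or a truncated version. Differentiating, the $\mathbf{n}\cdot b_\eps - R$ term on the outer boundary and the $-\phi(w)\div b_\eps$ term are both nonpositive, the inner-boundary term has the good sign since $b_\eps\in\R^2_+$ there, and the $b-b_\eps$ mismatch is $o_\eps(1)$; this leaves a forcing contribution controlled by $\int_{D_t}\phi'(w)\,\mathcal{E}_n$. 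A Grönwall-type estimate then gives, for each $t\in[0,M]$,
\begin{align*}
    \int_{[0,M']^2}|w(t,h)|\,\d h \;\le\; C\!\left(\int_{[0,M]^2}|w(0,h)|\,\d h \;+\; \int_0^M\!\!\int_{[0,M]^2}|\mathcal{E}_n(s,h)|\,\d h\,\d s\right)
\end{align*}
after sending $\eps,\eta\to 0$ and suitably choosing $M'<M$; the initial term is bounded by $M^2 L_{\psi,M,n}$ since $w(0,\cdot)=\bF_n(0,\cdot)-\psi$.

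It remains to bound $\int_0^M\!\int_{[0,M]^2}|\mathcal{E}_n|$. The viscosity part contributes $\frac{1}{2N}\int_0^M\!\int_{[0,M]^2}\Delta\bF_n$, which by the divergence theorem is controlled by the flux of $\nabla\bF_n$ across $\partial([0,M]^2)$, hence by $CM/N \le CM n^{-1}$ using \eqref{eq:grad_bF<C} and $N=\sqrt{mn}\ge cn$. The concentration part contributes $\frac12\int_0^M\!\int_{[0,M]^2}\E|\nabla(F_n-\bF_n)|^2$; here I would not estimate $\nabla(F_n-\bF_n)$ directly but rather interpolate: a pointwise second-derivative lower bound on $F_n-\bF_n$ is available from \eqref{eq:2nd_d_F_lower_bound} and \eqref{eq:1st_2nd_d_bF_lower_bound} away from the axes, and a convexity/interpolation lemma converts an $L^\infty$ bound on $F_n-\bF_n$ (of order $K_{CM,n}$ after taking expectations and using \eqref{eq:def_K}) plus such a one-sided second-derivative bound into an $L^1$ bound on the squared gradient of order $(K_{CM,n})^{2/3}$, with the leftover $h^{-3/2}$ singularity near the axes integrable and contributing the lower-order $K_{CM,n}$ term; the $h_i^{-3/2}$ factors are integrable over $[0,M]^2$ and their $L^1$ norm is $O(M^{1/2})$, absorbed in the $CM^2$ prefactor. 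Collecting the three contributions $M^2 L_{\psi,CM,n}$, $M^2 n^{-1}$, $M^2(K_{CM,n})^{2/3}$ and $M^2 K_{CM,n}$ gives the claimed bound.

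\textbf{Main obstacle.} The delicate point is the passage from a concentration estimate (control of $\E\sup|F_n-\bF_n|^2 = K_{CM,n}^2$, an $L^\infty$-in-$h$, $L^2$-in-$\omega$ statement) to an $L^1$-in-$h$ bound on $\E|\nabla(F_n-\bF_n)|^2$: one cannot differentiate a supremum bound directly, so this requires the interpolation argument exploiting the one-sided second-derivative bounds \eqref{eq:2nd_d_F_lower_bound}, and care with the non-integrable-looking $h_i^{-3/2}$ weights near the coordinate axes (which is precisely why the statement carries both a $(K_{CM,n})^{2/3}$ and a $K_{CM,n}$ term, and why the domain in the energy argument is kept away from the axes by the parameter $\eta$ before the limit). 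Making the bookkeeping of the $M$-dependence uniform over $\psi\in\mathcal{A}(c)$ is routine once $R$ and all Lipschitz constants are bounded in terms of $c$ alone.
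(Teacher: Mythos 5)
Your overall strategy coincides with the paper's: write $\partial_t w_n = a_n\cdot\nabla w_n + r_n$ with $r_n$ the defect from Lemma~\ref{lemma:HJ}, mollify the vector field, exploit the signs coming from monotonicity and partial convexity, run a shrinking-domain estimate, and then bound the two contributions to $\int|r_n|$ by $n^{-1}$ and a concentration quantity. However, the step where you bound the concentration contribution contains a genuine error. You write that ``the $h_i^{-3/2}$ factors are integrable over $[0,M]^2$ and their $L^1$ norm is $O(M^{1/2})$'' and that the singularity merely contributes the lower-order $K$ term; this is false, since $\int_0^M h^{-3/2}\,\d h$ diverges. The paper avoids the divergence precisely by keeping $D_t$ a distance $\eta$ from the coordinate axes, so that $\int_\eta^{RT} h^{-3/2}\,\d h\lesssim\eta^{-1/2}$, which gives $\E\int_{D_t}|\nabla(F_n-\bF_n)|^2\le CT(1+\eta^{-1/2})K$. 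The region discarded by the $\eta$-cutoff then costs $O(T^2\eta)$, and the exponent $2/3$ comes from balancing $\eta^{-1/2}K$ against $\eta$, i.e.\ $\eta=K^{2/3}$. So you have the accounting reversed: the interpolation (integration by parts in $h$) only gives a bound linear in $K$ with an $\eta$-dependent prefactor; the $K^{2/3}$ emerges from the geometric optimization over $\eta$, and the plain $K$ term is the $\eta$-independent part of the concentration bound, not the leftover of the singularity. Your closing ``Main obstacle'' paragraph actually describes this correctly (``non-integrable-looking'', $\eta$-truncation), so it contradicts the body of your argument; the body needs to be repaired to match.

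A secondary issue: taking $\phi(z)=z^2$ and invoking Gr\"onwall would not give the stated $CM^2$ prefactor. With $\phi(z)=z^2$ the forcing term is $\int 2w r_n$, and whether you absorb it via Young's inequality (yielding $e^{CM}$ from Gr\"onwall) or via $\|w\|_\infty\int|r_n|$ followed by Cauchy--Schwarz to return to an $L^1$ bound, you lose the clean polynomial dependence in $M$ and the linear dependence on $L$, $K$. The paper instead takes $\phi_\delta(z)=(\delta+z^2)^{1/2}$, a smooth approximation of $|z|$, so that $|\phi_\delta'|\le 1$ and one gets $\frac{\d}{\d t}J_\delta(t)\le\int_{D_t}|r_n|$ directly, with no Gr\"onwall factor at all. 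Your alternative ``truncated version'' could be taken to mean exactly this, but as stated the $z^2$ choice would fail. (The choice of transport field is immaterial: your averaged $b=\int_0^1\nabla\H(r\nabla\bF_n+(1-r)\nabla f)\,\d r$ and the paper's $a_n=(\partial_\htwo f,\partial_\hone\bF_n)$ both have nonnegative entries with $\div\ge0$ after mollification, so either works.)
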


As anticipated in Remark~\ref{Rem:X_Y_not_independent}, we state the result for the case where $X$ and $Y$ are not independent. Compared with Theorem~\ref{Prop:general_cvg_bF} where the Hopf formula gives the existence of solutions, we need additional but mild assumptions to guarantee the existence of solutions in this non-independent case.

\begin{Prop}\label{prop:cvg_for_non_independent}
Under assumptions \eqref{eq:support_X_Y} and \eqref{eq:m/n_to_alpha}, suppose $\lim_{n\to\infty}K_{M,n}=0$ for all $M\geq 1$ and that there is a $\psi:\R^2_+\to\R$ such that $\lim_{n\to\infty}L_{\psi,M,n}=0$ for all $M\geq 1$. 

\smallskip

Then \eqref{eq:general_2d_HJ} admits a unique weak solution $f$ with $f(0,\cdot) =\psi$. Furthermore, there is $C>0$ such that the following holds for all $M\geq 1$ and all $n\in\N$:
\begin{align*}
    \sup_{t\in [0,M]}\int_{[0,M]^2}\big|\bF_n(t,h)-f(t,h)\big|\d h\leq CM^2\Big(L_{\psi,CM,n}+n^{-1}+K_{CM,n}\Big).
\end{align*}

\end{Prop}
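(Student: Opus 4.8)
The additional hypotheses $\lim_n L_{\psi,M,n}=0$ and $\lim_n K_{M,n}=0$ are needed only to produce a weak solution of \eqref{eq:general_2d_HJ}; once $\psi$ is known to admit such a solution, the quantitative estimate is exactly what Theorem~\ref{thm:most_general_cvg} provides. So the plan is to build a candidate $f$ by compactness, verify it solves the equation in the sense of Definition~\ref{def:weak_sol}, and then invoke Theorem~\ref{thm:most_general_cvg}. For the compactness step: by \eqref{eq:grad_bF<C} the functions $\bF_n$ are Lipschitz on $\R_+^3$ with a constant $C$ independent of $n$, and $\bF_n(0,0)=0$; hence by Arzel\`a--Ascoli and a diagonal extraction some subsequence converges locally uniformly to a Lipschitz limit $f$ with $\|f\|_\mathrm{Lip}\le C$ and $f(0,0)=0$. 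Since $L_{\psi,M,n}\to0$ for every $M$, the limit satisfies $f(0,\cdot)=\psi$ on all of $\R_+^2$, so $\psi$ is Lipschitz with $\|\psi\|_\mathrm{Lip}\le C$ and $\psi(0)=0$, and the initial condition of Definition~\ref{def:weak_sol} holds.

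Monotonicity of $f(t,\cdot)$ and the partial-convexity inequality \eqref{eq:partial_convex_f} pass to the limit from $\bF_n$, for which they hold by \eqref{eq:1st_2nd_d_bF_lower_bound} (nonnegativity of $\partial_{h_i}\bF_n$ and of its mixed second derivatives). It remains to pass the equation to the limit. Mollifying in $h$ at scale $\eps$ (with the bump $\xi_\eps$ of Remark~\ref{Rem:infinity_norm}) and convolving the inequality of Lemma~\ref{lemma:HJ} gives, for $\bF_n^\eps:=\bF_n(t,\cdot)*\xi_\eps$,
\begin{align*}
    \Big|\partial_t\bF_n^\eps-\big((\partial_{\hone}\bF_n)(\partial_{\htwo}\bF_n)\big)*\xi_\eps\Big|\le \frac1{2N}\Delta\bF_n^\eps+\frac12\big(\E|\nabla(F_n-\bF_n)|^2\big)*\xi_\eps .
\end{align*}
Since $\|\nabla\bF_n\|_\infty\le C$ one has $\|\Delta\bF_n^\eps\|_\infty\le C\eps^{-1}$, so the viscosity term is at most $C/(N\eps)$ and vanishes as $n\to\infty$ with $\eps$ fixed. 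The concentration term is controlled in a space--time average: the one-sided bounds \eqref{eq:2nd_d_F_lower_bound} make $h\mapsto F_n$ convex up to a negligible error on $[M^{-1},M]^2$, and comparing a near-convex random function with its mean via \eqref{eq:def_K} yields $\int_{[0,M]^3}\E|\nabla(F_n-\bF_n)|^2\,\d t\,\d h\to0$. Letting $n\to\infty$ and then $\eps\to0$, and using $\nabla\bF_n^\eps\to\nabla f$ and $\partial_t\bF_n^\eps\to\partial_t f$ in $L^1_{\mathrm{loc}}$ with a.e.\ convergence along a further subsequence (so that the product of first-order derivatives also converges a.e.), we obtain $\partial_t f-(\partial_{\hone}f)(\partial_{\htwo}f)=0$ a.e. Thus $f$ is a weak solution; by Proposition~\ref{Prop:uniq_sol_HJ} it is the unique one, so the full sequence $\bF_n$ converges to $f$ locally uniformly, and $\psi$ belongs to the class $\mathcal{A}(C)$ defined before Theorem~\ref{thm:most_general_cvg}.

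With $\psi\in\mathcal{A}(C)$, Theorem~\ref{thm:most_general_cvg} applies and yields the $L^\infty_tL^1_h$ bound on $\bF_n-f$; rerunning its proof in the present setting, where the near-convexity estimates above identify the relevant concentration error as the first power of $K_{CM,n}$, produces the stated $CM^2(L_{\psi,CM,n}+n^{-1}+K_{CM,n})$. I expect the main obstacle to be the passage of the equation to the limit: Lemma~\ref{lemma:HJ} only controls $\E|\nabla(F_n-\bF_n)|^2$ pointwise, whereas it is its average that is small, so matching the nonlinear term of $\bF_n$ with that of $f$ requires almost-everywhere rather than merely weak convergence of $\nabla\bF_n$. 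Extracting that from the one-sided second-derivative bounds \eqref{eq:1st_2nd_d_bF_lower_bound}--\eqref{eq:2nd_d_F_lower_bound} via mollification, while keeping every error term quantitative so that the estimate (not just qualitative convergence) survives, is where the bulk of the work lies.
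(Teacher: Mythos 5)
Your route differs from the paper's at the compactness step: you extract $f$ by Arzel\`a--Ascoli (plus uniqueness to upgrade subsequential to full convergence), whereas the paper shows directly that $\{\bF_n\}$ is Cauchy in local $L^\infty_t L^1_h$ by rerunning the comparison argument from the proof of Theorem~\ref{thm:most_general_cvg} with $w=\bF_n-\bF_{n'}$, $a=(\partial_\htwo\bF_{n'},\,\partial_\hone\bF_n)$, and the defects $r_n,r_{n'}$, then upgrades to local uniform convergence via Remark~\ref{Rem:infinity_norm}. Both approaches are legitimate; yours is a bit less computational, the paper's is more self-contained in that it reuses machinery already developed.

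The step that does not close in your proposal is passing the equation to the limit. Your mollified inequality carries the term $\big((\partial_\hone\bF_n)(\partial_\htwo\bF_n)\big)*\xi_\eps$, i.e.\ the nonlinear product mollified as a whole, not the product of mollified gradients; so letting $n\to\infty$ at fixed $\eps$ requires $\nabla\bF_n\to\nabla f$ a.e., which you flag as the obstacle but defer to ``mollification of the one-sided second-derivative bounds''. That does not by itself yield a.e.\ convergence of the first derivatives, and it is the convexity, not the one-sided second-derivative bounds on the random $F_n$, that supplies the missing ingredient. By \eqref{eq:1st_2nd_d_bF_lower_bound}, $h\mapsto\bF_n(t,h)$ is convex along each coordinate direction (and hence so is $f(t,\cdot)$); local uniform convergence of convex functions implies convergence of derivatives at every point of differentiability of the limit, and Rademacher makes $f$ differentiable a.e. This gives $\nabla\bF_n\to\nabla f$ and $\partial_t\bF_n\to\partial_t f$ a.e.\ directly, so $r_n\to\partial_t f-(\partial_\hone f)(\partial_\htwo f)$ a.e.; then bounded convergence (using \eqref{eq:grad_bF<C}) turns \eqref{eq:int_r_n_bound} together with $K_{M,n}\to0$ into $\int_B|\partial_t f-(\partial_\hone f)(\partial_\htwo f)|=0$ for any compact $B\subset(0,\infty)^2$. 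This is the clean resolution of the issue you correctly identified. Finally, a word on the exponent: Theorem~\ref{thm:most_general_cvg} delivers $(K_{CM,n})^{2/3}+K_{CM,n}$, not $K_{CM,n}$ alone; your ``near-convexity'' remark does not in fact remove the $2/3$-power, and the paper's own proof (``easily follows from Theorem~\ref{thm:most_general_cvg}'') carries the same discrepancy, so the stated bound should almost certainly read $CM^2\big(L_{\psi,CM,n}+n^{-1}+(K_{CM,n})^{2/3}+K_{CM,n}\big)$.
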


The first subsection is devoted to the proof of Theorem~\ref{thm:most_general_cvg}. After that, we prove Theorem~\ref{Prop:general_cvg_bF} and Proposition~\ref{prop:cvg_for_non_independent}. Results on Hamilton--Jacobi equations in Section~\ref{section:HJ_eqn} are needed. Some argument in the proof of Theorem~\ref{thm:most_general_cvg} will be reused to prove Proposition~\ref{prop:cvg_for_non_independent}.

\subsection{Proof of Proposition~ \ref{thm:most_general_cvg}}

Let $w_n = \bF_n -f$ and
\begin{align}\label{eq:def_r_n}
    r_n = \partial_t\bF_n - (\partial_\hone\bF_n)( \partial_\htwo \bF_n).
\end{align}
Then, we have
\begin{align}\label{eq:w_n}
    \partial_t w_n = a_n\cdot \nabla \omega_n+r_n
\end{align}
where
\begin{align}\label{eq:def_a_n}
    a_n =(a_{n,1},\ a_{n,2})= (\partial_\htwo f,\ \partial_\hone \bF_n).
\end{align}
For $\delta>0$, let $\phi_\delta:\R\to [0,\infty)$ be given by
\begin{align}\label{eq:def_phi_delta}
    \phi_\delta(x)=(\delta+x^2)^\frac{1}{2},
\end{align}
which  serves as a smooth approximation of the absolute value. 
Take $v_n = \phi_\delta(w_n)$ and multiply both sides of \eqref{eq:w_n} by $\phi'_\delta(w_n)$ to see
\begin{align*}
    \partial_t v_n = a_n\cdot \nabla v_n +\phi'_\delta(w_n)r_n
\end{align*}
Recall the mollifier $\xi_\eps$ given in Remark~\ref{Rem:infinity_norm}. Let us regularize $a_n$ by setting $a^\eps_{n,i} = a_{n,i}*\xi_\eps  $, with the convolution taken in $h$. Note that $a^\eps_n$ is well-defined for $(t,h)\in\R_+\times \R^2_\eps$ where $\R_\eps = [\eps,\infty)$. For $(t,h)\in\R_+\times \R^2_\eps$, we can rewrite the above display as 
\begin{align}\label{eq:v_n_eq}
    \partial_t v_n = \div(v_na^\eps_n) - v_n\div a^\eps_n + (a_n - a^\eps_n)\cdot \nabla v_n +\phi'_\delta(w_n)r_n.
\end{align}

Let us derive a few estimates related to $a^\eps_n$. Since $\psi\in\mathcal{A}(c)$, we know that $\|\psi\|_\mathrm{Lip}\leq c$. By \eqref{eq:lip}, there is $C>0$ such that
\begin{align}\label{eq:admissible_f_lip}
    \sup_{\psi\in\mathcal{A}(c)}\|f\|_\mathrm{Lip}\leq C.
\end{align}
where $f$ is a weak solution with initial condition $f(0,\cdot)=\psi$.
By this, \eqref{eq:grad_bF<C} and \eqref{eq:def_a_n}, there is $C>0$ such that the following holds for all $n$, all $\eps \in (0,1)$ and all $ (t,h)\in\R_+\times \R^2_\eps$,
\begin{align}
    \|a_n -a_n^\eps\|_\infty&=  o_\eps(1);\label{eq:a_n-a_n^eps}\\
    \|a_n^\eps\|_\infty &\leq \|a_n\|_\infty \leq C.
\end{align}
Using \eqref{eq:1st_2nd_d_bF_lower_bound} and \eqref{item:4} in Definition~\ref{def:weak_sol}, we also have, for $ (t,h)\in\R_+\times \R^2_\eps$,
\begin{align}\label{eq:div_a_n^eps_lower_bound}
    \div a_n^\eps = \partial_\hone \partial_\htwo \big( f *\xi_\eps\big)+ \partial_\hone \partial_\htwo \big(\bF_n*\xi_\eps\big) \geq 0. 
\end{align}

Choose $R = 1+ \sup_{n,\eps}\|a_n^\eps\|_\infty$. Let $T\geq 1$ and $\eta>0$ be specified later. Consider the following sets, indexed by $t\in[0,T]$,
\begin{align}
    D_t & = \{h\in\R_+^2: h_1,h_2\geq \eta,\ |h|\leq R(T-t)\},\label{eq:def_D_t}\\
    \Gamma_t&= \{h:|h|=R(T-t)\}\cap D_t.\nonumber
\end{align}

Let us consider the object
\begin{align}\label{eq:def_J(t)}
    J_\delta(t) = \int_{D_t} v_n(t,h)\d h = \int_{D_t} \phi_\delta\big(w_n(t,h)\big)\d h.
\end{align}
Let $\eps<\eta$, which guarantees $D_t\subset \R^2_\eps$. Differentiate $J_\delta(t)$ in $t$ and use \eqref{eq:v_n_eq} to see
\begin{align*}
    &\frac{\d}{\d t}J_\delta(t) = \int_{D_t} \partial_t  v_n - R\int_{\Gamma_t}  v_n\\
    &= \int_{\Gamma_t}(a^\eps_n\cdot \mathbf{n} -R)  v_n +\int_{\partial D_t \setminus \Gamma_t}(a^\eps_n\cdot \mathbf{n})v_n  + \int_{D_t}\Big(-v_n \div  a_n^\eps + (a_n-a_n^\eps)\cdot \nabla v_n + \phi'_\delta(w_n)r_n\Big).
\end{align*}
Here in the second identity, we used integration by parts. The first integral above is nonpositive due to the choice of $R$. The second integral is also nonpositive due to the direction of $\mathbf{n}$ on $\partial D_t \setminus \Gamma_t$, and the fact that $f$ and $\bF_n$ are nondecreasing. Applying \eqref{eq:a_n-a_n^eps}--\eqref{eq:div_a_n^eps_lower_bound}  to the last integral and sending $\eps\to 0$, we obtain
\begin{align}\label{eq:dt_J_n}
    \frac{\d}{\d t}J_\delta(t)\leq  \int_{D_t}\phi'_\delta(w_n)r_n\leq \int_{D_t}|r_n|.
\end{align}
Here, in the last inequality, we used $\|\phi'_\delta\|_\infty\leq 1$ which is evident from \eqref{eq:def_phi_delta}. Lemma~\ref{lemma:HJ} gives an upper bound for $|r_n|$. Hence, we have
\begin{align}\label{eq:int_rn_up_bdd_1}
    \int_{D_t}|r_n| \leq \bigg( \frac{C}{n}\int_{D_t}\Delta \bF_n \bigg) +\bigg(\frac{1}{2}\E\int_{D_t}\big|\nabla( F_n -  \bF_n)\big|^2\bigg).
\end{align}
In view of \eqref{eq:grad_bF<C}, after integration by parts, the first term can be bounded by $CTn^{-1}$. Here and Henceforth, we absorb $R$ into $C$. To avoid heavy notation, let us write
\begin{align}\label{eq:notation_K,L}
    K=K_{RT,n},\qquad L=L_{\psi,RT,n}.
\end{align}
For the last integral in \eqref{eq:int_rn_up_bdd_1}, we will show that
\begin{align}\label{eq:derivative_concentration}
    \E\int_{D_t}\big|\nabla( F_n -  \bF_n)\big|^2\leq CT(1 +\eta^{-\frac{1}{2}})K .
\end{align}
These estimates imply that
\begin{align}\label{eq:int_r_n_bound}
    \int_{D_t}|r_n| \leq CT\Big(n^{-1}+(1 +\eta^{-\frac{1}{2}})K\Big).
\end{align}

This along with \eqref{eq:dt_J_n} implies that
\begin{align*}
    J_\delta(t) \leq J_\delta(0)+C T^2\big(n^{-1}+(1 +\eta^{-\frac{1}{2}})K\big),\quad t\in[0,T].
\end{align*}
Recall definitions \eqref{eq:def_L}, \eqref{eq:def_phi_delta} and \eqref{eq:def_J(t)}. Hence, for $t=0$, we have 
\begin{align*}
   \lim_{\delta\to0} J_\delta(0)= \int_{D_0}\big|\bF_n(0,h)-f(0,h)\big|\d h \leq CT^2 L.
\end{align*}
Sending $\delta\to0$, from the above two displays, we derive that
\begin{align*}
    \sup_{t\in[0,T]}\int_{D_t}\big|\bF_n(t,h)-f(t,h)\big|\d h\leq C T^2\big(L+n^{-1}+(1 +\eta^{-\frac{1}{2}})K\big).
\end{align*}
We want to extend the above result from integrating over $D_t$ to $\{|h|\leq R(T-t)\}$. Due to $\psi\in\mathcal{A}(c)$, we have $\psi(0)=0$, which together with $f(0,0)=\psi(0)$ and \eqref{eq:admissible_f_lip} implies  $|f(t,h)|\leq C(t+|h|)$. Using this and \eqref{eq:bF<C(t+|h|)}, we have
\begin{align*}
    \sup_{t\in[0,T]}\int_{\{|h|\leq R(T-t)\}\setminus D_t}\big|\bF_n(t,h) - f(t,h)\big|\d h\leq \int_{\{|h|\leq R(T-t)\}\setminus D_t}C T \leq CT^2\eta,
\end{align*}
Therefore, we obtain
\begin{align*}
    \sup_{t\in[0,T]}\int_{\{|h|\leq R(T-t)\}}\big|\bF_n(t,h)-f(t,h)\big|\d h\leq C T^2\big(\eta+L+n^{-1}+(1 +\eta^{-\frac{1}{2}})K\big).
\end{align*}
Let us now choose proper values for $T$ and $\delta$. Set $T= \sqrt{2}(1+R)M/R$ to ensure $[0,M]^3\subset \{(t,h):t\in[0,T],\ |h|\leq R(T-t)\}$. Inserting this $T$ and $\eta = K^\frac{2}{3}$ into the above display to see
\begin{align*}
    \sup_{t\in[0,M]}\int_{[0,M]^2}\big|\bF_n(t,h)-f(t,h)\big|\d h \leq CM^2\big(L+n^{-1}+K^\frac{2}{3}+K\big).
\end{align*}
Recall our notation \eqref{eq:notation_K,L}. This gives the desired result.

\bigskip

It remains to verify \eqref{eq:derivative_concentration}.

\subsubsection{Proof of \eqref{eq:derivative_concentration}}
Using integration by parts, we have
\begin{align}
    \int_{D_t}\big|\nabla(F_n - \bF_n)\big|^2 &= \int_{\partial D_t}(F_n-\bF_n)\nabla(F_n-\bF_n)\cdot \mathbf{n} - \int_{D_t}(F_n-\bF_n)\Delta(F_n-\bF_n)\nonumber\\
    &\leq \|F_n-\bF_n\|_{L^\infty([0,RT]^3)}\bigg(\int_{\partial D_t}\big|\nabla(F_n - \bF_n)\big|+\int_{D_t}\big|\Delta(F_n-\bF_n)\big|\bigg),\label{eq:IBP_step}
\end{align}
Let us estimate the last integral. The lower bound \eqref{eq:1st_2nd_d_bF_lower_bound} shows $\Delta \bF_n\geq 0$, and the lower bound \eqref{eq:2nd_d_F_lower_bound} implies that
\begin{align*}
    \Delta F_n + Cn^{-\frac{1}{2}}\big(\hone^{-\frac{3}{2}}|U|+\htwo^{-\frac{3}{2}}|V|\big)\geq 0.
\end{align*}
These yield
\begin{align*}
    \int_{D_t}\big|\Delta(F_n-\bF_n)\big|&\leq \int_{D_t} \big|\Delta F_n\big|+\big|\Delta\bF_n\big|\\
    &\leq \int_{D_t} \big(\Delta F_n + \Delta\bF_n\big) + \int_{D_t} 2Cn^{-\frac{1}{2}}\big(\hone^{-\frac{3}{2}}|U|+\htwo^{-\frac{3}{2}}|V|\big).
\end{align*}
Applying integration by parts to the first integral and the definition of $D_t$ to the second integral, we can see
\begin{align*}
     \int_{D_t}\big|\Delta(F_n-\bF_n)\big| \leq \int_{\partial D_t}\big|\nabla F_n\big|+\big|\nabla\bF_n\big| \d h+ CTn^{-\frac{1}{2}}\eta^{-\frac{1}{2}}\big(|U|+|V|\big).
\end{align*}
Due to \eqref{eq:grad_bF<C} and \eqref{eq:1st_der_F_n_est}, we can obtain
\begin{align*}
    \int_{\partial D_t}\big|\nabla F_n\big|+\big|\nabla\bF_n\big|\leq CT\Big(1+n^{-\frac{1}{2}}\eta^{-\frac{1}{2}}(|U|+|V|)\Big).
\end{align*}
This display also serves as a bound for the first integral in \eqref{eq:IBP_step}. Insert the above two displays into \eqref{eq:IBP_step} to get
\begin{align*}
    \int_{D_t}\big|\nabla(F_n - \bF_n)\big|^2\leq CT\|F_n-\bF_n\|_{L^\infty([0,RT]^3)}\Big(1+n^{-\frac{1}{2}}\eta^{-\frac{1}{2}}(|U|+|V|)\Big).
\end{align*}
Recall the definition of $K_{M,n}$ in \eqref{eq:def_K}. Take expectations on both sides of this inequality and invoke the Cauchy--Schwarz inequality to conclude \eqref{eq:derivative_concentration}.

\subsection{Proof of Theorem~\ref{Prop:general_cvg_bF}}

We view \eqref{eq:general_2d_HJ} as a particular case of the equation \eqref{eq:general_HJ_eqn}-\eqref{eq:general_initial_value} with $d=2$ and $\psi$ given in \eqref{eq:assumption_gen_cvg}. Proposition~\ref{Prop:uniq_sol_HJ} guarantees the uniqueness of solutions. We want to verify conditions imposed on $\psi$ in Proposition~\ref{Prop:hopf_formula} are satisfied, in order to ensure the existence and represent the solution by the Hopf formula~\eqref{eq:Hopf_2d}. This is where the assumption on the independence of $X$ and $Y$ is needed.

\smallskip

Due to the independence, we can see that, for all $n\in\N$,
\begin{align*}
    \bF_n(0,h) = \bF_n(0,\hone,0 ) +\bF_n(0,0,\htwo)
\end{align*}
By the assumption \eqref{eq:assumption_gen_cvg}, we have $\psi(h) = \psi_1(\hone) + \psi_2(\htwo)$ where
\begin{align*}
    \psi_1(\hone) =\lim_{n\to\infty}\bF_n(0,\hone,0),\qquad \psi_2(\htwo) =\lim_{n\to\infty}\bF_n(0,0,\htwo).
\end{align*}
This display along with \eqref{eq:grad_bF<C} and \eqref{eq:1st_2nd_d_bF_lower_bound} implies that $\psi_1$ and $\psi_2$ are Lipschitz, convex and nondecreasing. Therefore, this allows us to apply Proposition~\ref{Prop:hopf_formula}, which along with the uniqueness result implies the first part of Theorem~\ref{Prop:general_cvg_bF}. Due to $\bF_n(0,0)=0$, we also have $\psi(0)=0$. The second part of Theorem~\ref{Prop:general_cvg_bF} is now a direct consequence of Theorem~\ref{thm:most_general_cvg}.

\subsection{Proof of Proposition~\ref{prop:cvg_for_non_independent}}

Note that once the existence of solutions is shown, Proposition~\ref{prop:cvg_for_non_independent} easily follows from Theorem~\ref{thm:most_general_cvg} and Proposition~\ref{Prop:uniq_sol_HJ}. To obtain the existence, the plan is to first show that $\{F_n\}_{n\in\N}$ is a Cauchy sequence in the local uniform topology and then verify that the limit, denoted as $f$, is a weak solution. 

\subsubsection{The sequence $\{\bF_n\}_{n=1}^\infty$ is Cauchy}

For this, we will need the assumptions  $\lim_{n\to\infty}K_{M,n}=0$ and $\lim_{n\to\infty}L_{\psi,M,n}=0$.

We proceed similarly as in the proof of Theorem~\ref{thm:most_general_cvg}. Recall the definitions of $r_n$ in \eqref{eq:def_r_n} and $\phi_\delta$ in \eqref{eq:def_phi_delta}. Let $n,n'\in\N$. We take $w = \bF_n-\bF_{n'}$,  $v=\phi_\delta(w)$, $a= (\partial_\htwo \bF_{n'},\partial_\hone \bF_n)$ and $a^\eps = a*\xi_\eps$ where $\xi_\eps$ is the mollifier in Remark~\ref{Rem:infinity_norm}. Similar to the derivation of \eqref{eq:v_n_eq}, we have
\begin{align*}
    \partial_t v = \div(va^\eps) - v\div a^\eps +(a-a^\eps)\cdot \nabla +\phi'_\delta(w)(r_n+r_{n'}).
\end{align*}
The only difference is that we have an additional $r_{n'}$ in the last term. 

Since $\bF_n$ is Lipschitz uniformly in $n$ due to \eqref{eq:grad_bF<C}, we can fix $R= 1+\sup_{n,\eps}\|a^\eps\|_\infty$. Set $D_t$ as in \eqref{eq:def_D_t}, and similarly take
\begin{align*}
    J_\delta(t)=\int_{D_t}v(t,h)\d h = \int_{D_t}\phi_\delta(w(t,h))\d h.
\end{align*}
By similar treatments used to obtain \eqref{eq:dt_J_n}, we have
\begin{align*}
    \frac{\d}{\d t}J_\delta(t) \leq \int_{D_t}\phi'_\delta(w)(r_n+r_{n'})\leq \int_{D_t}|r_n|+|r_{n'}|.
\end{align*}
The rest follows the exact same path after $\eqref{eq:dt_J_n}$ in the proof of Theorem~\ref{thm:most_general_cvg}. The only difference is that we have more terms due to $\bF_{n'}$, but they are treated in the same way as $\bF_n$. One can see that, instead of the estimate in Theorem~\ref{Prop:general_cvg_bF}, we obtain
\begin{align*}
    \sup_{t\in[0,M]}\int_{[0,M]^2}\big|\bF_n(t,h)-\bF_{n'}(t,h)\big|\d h &\leq CM^2\Big(L_{CM,n}+n^{-1}+ (K_{CM,n})^\frac{2}{3}+K_{CM,n}\\
    & \qquad +L_{CM,n'}+(n')^{-1}+ (K_{CM,n'})^\frac{2}{3}+K_{CM,n'}\Big).
\end{align*}
Hence, by the assumptions on the decay of $K_{M,n}$ and $L_{\psi,M,n}$, we know that $\bF_n$ is Cauchy in local $L^\infty_tL^1_h$. Due to the argument in Remark~\ref{Rem:infinity_norm}, we can upgrade this to $\bF_n$ being Cauchy locally uniformly. Let us denote the limit by $f$.

\subsubsection{Verify that $f$ is a weak condition}

We check that each property listed in Definition~\ref{def:weak_sol} is satisfied by $f$.

Since $F_n$ is nondecreasing due to \eqref{eq:1st_2nd_d_bF_lower_bound} and Lipschitz uniformly in $n$ due to \eqref{eq:grad_bF<C}, we can conclude that $f$ is nondecreasing and Lipschitz. Due to $\lim_{n\to\infty}L_{\psi,M,n}=0$, we have $f(0,\cdot)=\psi$ verifying \eqref{item:2} of Definition~\ref{def:weak_sol}. By \eqref{eq:1st_2nd_d_bF_lower_bound}, property \eqref{item:4} of the definition also holds. 

It remains to verify that $f$ satisfies \eqref{eq:general_HJ_eqn} almost everywhere (a.e.). By \eqref{eq:1st_2nd_d_bF_lower_bound}, we know that, along each coordinate, both $\bF_n$ and $f$ are convex. It is well known that convexity implies convergence of derivatives at each point of differentiability. The Lipschitzness of $f$ and Rademacher's theorem imply that $f$ is differentiable a.e. Hence, we deduce that $\partial_t \bF_n - (\partial_\hone\bF_n)(\partial_\htwo \bF_n)$ converges to $\partial_t f - (\partial_\hone f)(\partial_\htwo f)$ pointwise a.e.

Since $\bF_n$ is Lipschitz uniformly in $n$, the bounded convergence theorem implies that, for any compact $B\in (0,\infty)^2$ and $t$ a.e., 
\begin{align*}
    \int_B\Big|\partial_t f - (\partial_\hone f)(\partial_\htwo f)\Big|(t,h)\d h = \lim_{n\to\infty}\int_B\Big|\partial_t \bF_n - (\partial_\hone\bF_n)(\partial_\htwo \bF_n)\Big|(t,h)\d h.
\end{align*}
We want to show the right hand side is zero. Recall the definition of $D_t$ in \eqref{eq:def_D_t}. By choosing $T$ and $\delta $ in $D_t$ suitably, we can ensure $B\subset D_t$. Then, by \eqref{eq:def_r_n}, \eqref{eq:int_r_n_bound} and $\lim_{n\to\infty}K_{M,n}=0$, we conclude that the right hand side of the above display is zero. Since $B$ and $t$ are arbitrary, we conclude that $\partial_t f - (\partial_\hone f)(\partial_\htwo f)=0$ a.e.

\section{Application to special cases}\label{section:application}

Recall the settings for the i.i.d.\ case in Section~\ref{section:iid} and the spherical case in Section~\ref{section:spherical}.
In order to apply Theorem~\ref{Prop:general_cvg_bF} to these cases, we need to identify $\psi$ in \eqref{eq:assumption_gen_cvg} and obtain estimates of $K_{M,n}$ and $L_{\psi,M,n}$ defined in \eqref{eq:def_K}-\eqref{eq:def_L}. In this section, we establish these results, which are stated below.

\begin{Lemma}\label{lemma:concentration}
In both cases, there is $C>0$ such that the following holds for all $M\geq1$ and $n\in \N$,
\begin{align*}
    K_{M,n} \leq Cn^{-\frac{1}{2}}\big( M+\sqrt{\log n}\big).
\end{align*}
\end{Lemma}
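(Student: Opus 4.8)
The plan is to prove the concentration estimate $K_{M,n}\leq Cn^{-1/2}(M+\sqrt{\log n})$ by viewing $F_n(t,h)$ as a function of the Gaussian randomness $(W,U,V)$ and applying Gaussian concentration of measure, uniformly over the compact parameter box $[0,M]^3$. Recall from \eqref{eq:def_K} that we must control $\E\sup_{(t,h)\in[0,M]^3}|F_n-\bF_n|^2$, so the two ingredients are (i) a pointwise sub-Gaussian tail for $F_n(t,h)-\bF_n(t,h)$ coming from the Lipschitz dependence of $F_n$ on the Gaussian vector, and (ii) a chaining/union-bound argument that upgrades the pointwise bound to a supremum over the box, at the cost of the $\sqrt{\log n}$ factor.

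\textbf{Step 1: Lipschitz dependence on the Gaussian data.} Fix $(t,h)\in[0,M]^3$ and regard $G_n(W,U,V):=N F_n(t,h)=\log\int e^{H_n(t,h,x,y)}P^{X,Y}_n(\d x,\d y)$. Since $H_n$ is affine in $(W,U,V)$ with coefficients $\sqrt{2t/N}\,x y^\intercal$, $\sqrt{2\hone}\,x$, $\sqrt{2\htwo}\,y$, the gradient of $G_n$ in these variables is a Gibbs average of these coefficients; using $|x|\le\sqrt m$, $|y|\le\sqrt n$ from \eqref{eq:support_X_Y} and $N=\sqrt{mn}$, one gets $|\nabla_W G_n|^2\le \frac{2t}{N}\langle|xy^\intercal|^2\rangle\le \frac{2t}{N}mn=2tN$, and similarly $|\nabla_U G_n|^2\le 2\hone m$, $|\nabla_V G_n|^2\le 2\htwo n$. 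Hence $G_n$ is Lipschitz in $(W,U,V)$ with constant $\lesssim \sqrt{M}\sqrt{N}$ on $[0,M]^3$ (using $m\le CN$, $n\le CN$ via \eqref{eq:m/n_to_alpha}), so by the Gaussian concentration inequality, $F_n(t,h)-\bF_n(t,h)=N^{-1}(G_n-\E G_n)$ is sub-Gaussian with variance proxy $\lesssim M/N\lesssim M/n$.

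\textbf{Step 2: From pointwise to uniform.} To control the supremum, I would combine the pointwise sub-Gaussian bound with the uniform Lipschitz estimates on $F_n$ in the parameters $(t,h)$. Away from the boundary ($t,h_i$ bounded below) the bounds \eqref{eq:1st_der_F_n_est} give $|\partial_t F_n|,|\nabla_h F_n|\le C(1+(|W|_{\mathrm{op}}+|U|+|V|)/\sqrt n)$, whose expectation is $O(1)$; near the boundary $t\downarrow 0$ or $h_i\downarrow 0$ one has to be slightly careful since these bounds blow up, but the free energy itself is bounded (via \eqref{eq:bF<C(t+|h|)} and the analogous pointwise control of $F_n$), so one can simply restrict the supremum to a fine grid of spacing $\sim n^{-1}$ in $[0,M]^3$ (with $O((Mn)^3)$ points), bound the discrepancy between grid and continuum by $O(M/n)$ plus a term handled by a crude uniform bound on $F_n-\bF_n$ near the boundary, take a union bound over the grid using the Step 1 tail, and integrate the resulting tail. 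The union bound over $O((Mn)^3)$ points contributes $\sqrt{\log((Mn)^3)}\lesssim \sqrt{\log n}+\sqrt{\log M}$ to the sub-Gaussian deviation; since $\sqrt{\log M}\le CM$, this produces the claimed $n^{-1/2}(M+\sqrt{\log n})$ after squaring and taking expectation.

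\textbf{Main obstacle.} The delicate point is the boundary behaviour near $t=0$ and $h_i=0$: both the Lipschitz-in-Gaussian constant (through $\sqrt{2t}$, $\sqrt{2\hone}$, $\sqrt{2\htwo}$) and the parameter-Lipschitz bounds \eqref{eq:1st_der_F_n_est} degenerate there, so one cannot naively use chaining on all of $[0,M]^3$. The cleanest fix is to note that for $t$ or $h_i$ of order $\le n^{-1}$, the corresponding terms in $H_n$ are uniformly small (of order $\le C\sqrt{n}\cdot n^{-1/2}=C$ after the $N^{-1}$ normalization, using $|x\cdot Wy|\le |W|_{\mathrm{op}}|x||y|$ and a union bound on $|W|_{\mathrm{op}}\le C\sqrt n$), so $F_n-\bF_n$ on that sliver differs from its value on the bulk by $O(n^{-1/2}\sqrt{\log n})$ in $L^2$, which is absorbed into the stated bound; on the bulk region $\{t,h_i\ge n^{-1}\}$ all constants are tame and the chaining argument of Step 2 applies verbatim. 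Everything else — Gaussian concentration, the union bound, integrating a sub-Gaussian tail — is standard.
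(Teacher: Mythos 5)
There is a genuine gap in Step~1: you only establish concentration of $F_n(t,h)$ with respect to the Gaussian data $(W,U,V)$, but $F_n$ also depends on the random signal $(X,Y)$ through the terms $\frac{2t}{N}xy^\intercal\cdot XY^\intercal$, $2\hone X\cdot x$, $2\htwo Y\cdot y$ in $H_n$. Your identity $F_n-\bF_n = N^{-1}(G_n-\E G_n)$ is false: $\bF_n$ is the expectation over \emph{all} the randomness, so $\E_{W,U,V}G_n$ is still a nondegenerate random variable in $X,Y$. Moreover, this neglected fluctuation is not lower order: a direct computation gives $|\partial_{X_i}F_n|\leq CM n^{-1}$, which over $m\sim \alpha n$ coordinates produces fluctuations of the same size $O(Mn^{-1/2})$ as your Gaussian term. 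The paper handles this by splitting $F_n-\bF_n$ via H\"older's inequality into three pieces, $(F_n-\E_X F_n)$, $(\E_X F_n - \E_{X,Y}F_n)$, $(\E_{X,Y}F_n-\bF_n)$, and applying a different concentration tool to each: the bounded-differences inequality for $X$ and $Y$ in the i.i.d.\ case, L\'evy's inequality on the sphere in the spherical case, and Gaussian concentration only for the last piece. Your proof needs an analogous argument for $X,Y$ before the pointwise sub-Gaussian tail is available.

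Once that is fixed, your Step~2 matches the paper's $\eps$-net argument (spacing $\eps\sim n^{-1}$, grid of size $O((Mn)^3)$, exponential moments and Jensen's inequality). Your ``main obstacle'' about the boundary $t,h_i\downarrow 0$ is, however, a non-issue in the paper's approach and does not require a separate treatment of a boundary sliver: integrating the bounds \eqref{eq:1st_der_F_n_est} over an interval $[t',t]$ gives
\begin{align*}
\big|F_n(t,h)-F_n(t',h')\big|\leq C\Big(1+n^{-\frac12}\big(|W|_{\mathrm{op}}+|U|+|V|\big)\Big)\big(|t-t'|^{\frac12}+|h-h'|^{\frac12}\big),
\end{align*}
a H\"older-$\tfrac12$ modulus of continuity valid uniformly up to the boundary, because the $1/\sqrt{t}$ and $1/\sqrt{|h|}$ singularities are integrable. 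With grid spacing $\eps\sim n^{-1}$ this already gives an $O(n^{-1/2})$ discrepancy between grid and continuum, which is what is needed.
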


Recall $\beta(n)$ in \eqref{eq:def_beta(n)}.

\begin{Lemma}\label{lemma:cvg_initial_iid}
In the i.i.d.\ case, the function $\psi$ in \eqref{eq:assumption_gen_cvg} is given by \eqref{eq:psi_iid}.
There is $C>0$ such that, for all $M>0$ and all $n\in \N$,
\begin{align*}
    L_{\psi,M,n}\leq CM\beta(n).
\end{align*}

\end{Lemma}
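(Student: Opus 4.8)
The plan is to compute $\bF_n(0,h)$ explicitly in the i.i.d.\ case, identify its pointwise limit as the function $\psi$ given in \eqref{eq:psi_iid}, and quantify the rate of convergence, with all the discrepancy coming from $\beta(n) = |m(n)/n - \alpha|$. First I would observe that when $t=0$ the enriched Hamiltonian $H_n(0,h,x,y)$ decouples completely into an $x$-part and a $y$-part, and since $P^{X,Y}_n$ is a product measure, $F_n(0,h)$ itself is a sum of two independent contributions. Writing $N = \sqrt{mn}$ and recalling the definition \eqref{eq:def_F_n}, this gives
\begin{align*}
    \bF_n(0,h) = \frac{1}{\sqrt{mn}}\sum_{i=1}^m \E\log\int_{[-1,1]} e^{\sqrt{2\hone}U_i x_i + 2\hone X_i x_i - \hone x_i^2}\,P^X_1(\d x_i) + (\text{$y$-analogue}).
\end{align*}
Each summand in the $x$-part is identical in distribution (i.i.d.\ entries), so the $x$-contribution equals $\frac{m}{\sqrt{mn}}\,\E\log\int e^{\sqrt{2\hone}U_1 x + 2\hone X_1 x - \hone x^2}P^X_1(\d x) = \sqrt{m/n}\,\cdot\, g_1(\hone)$ for a fixed single-variable function $g_1$ depending only on $P^X_1$. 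The key point is that this single-site quantity is precisely $\bF_1(0,\hone,0)$ up to the normalization $N(1)=\sqrt{m(1)}$: unwinding the $n=1$ case of the definitions shows $g_1(\hone) = \sqrt{m(1)}\,\bF_1(0,\hone,0)$. Similarly the $y$-contribution is $\sqrt{n/m}\,\cdot\,\sqrt{m(1)}\,\bF_1(0,0,\htwo)$. Hence
\begin{align*}
    \bF_n(0,h) = \Big(\tfrac{m(n)}{n}\Big)^{1/2}\sqrt{m(1)}\,\bF_1(0,\hone,0) + \Big(\tfrac{m(n)}{n}\Big)^{-1/2}\sqrt{m(1)}\,\bF_1(0,0,\htwo).
\end{align*}

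Next I would pass to the limit: since $m(n)/n \to \alpha$ by \eqref{eq:m/n_to_alpha}, the coefficients converge to $\alpha^{1/2}$ and $\alpha^{-1/2}$ respectively, yielding exactly the formula \eqref{eq:psi_iid} for $\psi$. This confirms that the $\psi$ appearing in assumption \eqref{eq:assumption_gen_cvg} is the claimed one. For the rate, I would write $|\bF_n(0,h) - \psi(h)|$ as a sum of two terms of the form $|(m(n)/n)^{\pm 1/2} - \alpha^{\pm 1/2}|$ times $\sqrt{m(1)}\,|\bF_1(0,\cdot)|$, and control the latter using \eqref{eq:bF<C(t+|h|)} (or directly \eqref{eq:grad_bF<C} together with $\bF_1(0,0)=0$), which bounds $|\bF_1(0,\hone,0)| \le C\hone \le CM$ for $h \in [0,M]^2$. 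The map $u \mapsto u^{\pm 1/2}$ is locally Lipschitz away from $0$, and since $m(n)/n$ and $\alpha$ are bounded below by a positive constant (for $n$ large, and one absorbs small $n$ into $C$), we get $|(m(n)/n)^{\pm 1/2} - \alpha^{\pm 1/2}| \le C\beta(n)$. Combining, $L_{\psi,M,n} = \sup_{h\in[0,M]^2}|\bF_n(0,h)-\psi(h)| \le CM\beta(n)$.

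The main obstacle — really the only subtle point — is the bookkeeping that relates the single-site free energy to $\bF_1(0,\cdot)$ with the correct powers of $m(1)$, since $m = m(n)$ is a prescribed function of $n$ and one must be careful that $N(1) = \sqrt{m(1)\cdot 1} = \sqrt{m(1)}$ while $N(n) = \sqrt{m(n)n}$, so the per-site normalization carries a factor $m(n)/N(n) = \sqrt{m(n)/n}$ for the $x$-block and $n/N(n) = \sqrt{n/m(n)}$ for the $y$-block. Everything else is a routine combination of the product structure, the i.i.d.\ assumption, and the a priori bound \eqref{eq:bF<C(t+|h|)}. A minor care point is the lower bound $m(n)/n \ge c > 0$ needed to make $u \mapsto \sqrt{u}$ Lipschitz on the relevant range: for $n$ beyond some $n_0$ this follows from $m(n)/n \to \alpha > 0$, and for the finitely many $n \le n_0$ one simply enlarges the constant $C$.
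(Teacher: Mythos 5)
Your proposal takes essentially the same route as the paper: at $t=0$ the Hamiltonian decouples into an $x$-block and a $y$-block, the product structure of $P^{X,Y}_n$ makes $\bF_n(0,h)$ a sum of $m$ identical single-site $x$-terms and $n$ identical single-site $y$-terms, each block is then re-expressed in terms of $\bF_1(0,\cdot)$, the limit is taken using \eqref{eq:m/n_to_alpha}, and the rate $CM\beta(n)$ is read off from the linear bound \eqref{eq:bF<C(t+|h|)} together with the local Lipschitzness of $u\mapsto u^{\pm 1/2}$ away from zero (with small $n$ absorbed into the constant, as you note).

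However, your key bookkeeping step $g_1(\hone)=\sqrt{m(1)}\,\bF_1(0,\hone,0)$ contradicts your own preceding display. You correctly derived the $x$-contribution equals $\sqrt{m(n)/n}\,g_1(\hone)$; specializing to $n=1$ gives $\bF_1(0,\hone,0)=\sqrt{m(1)}\,g_1(\hone)$, hence $g_1(\hone)=m(1)^{-1/2}\,\bF_1(0,\hone,0)$, with the \emph{opposite} power of $m(1)$. The two blocks are asymmetric here: at $n=1$ the $y$-block has a single coordinate, so $g_2(\htwo)=\sqrt{m(1)}\,\bF_1(0,0,\htwo)$ is right, but the $x$-block already has $m(1)$ coordinates, which cancel against $N(1)=\sqrt{m(1)}$ the other way. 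With the corrected $g_1$, one gets $\bF_n(0,\hone,0)=\bigl(m(n)/(n\,m(1))\bigr)^{1/2}\bF_1(0,\hone,0)$ and, in the limit, $\psi_1(\hone)=(\alpha/m(1))^{1/2}\bF_1(0,\hone,0)$ rather than $(\alpha\,m(1))^{1/2}\bF_1(0,\hone,0)$. To be fair, the paper's own proof writes $\bF_n(0,\hone,0)=\sqrt{m(n)m(1)/n}\,\bF_1(0,\hone,0)$ and puts $(\alpha m(1))^{1/2}$ in \eqref{eq:psi_iid}, which is consistent at $n=1$ only when $m(1)=1$ and appears to be a typo; you have reproduced the paper's argument faithfully, including this slip, but you should have noticed that your asserted relation for $g_1$ is incompatible with the line you wrote two sentences earlier. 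The rest of the proposal, including the rate estimate, is sound.
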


\begin{Lemma}\label{lemma:cvg_initial_spherical}
In the spherical case, the function $\psi$ in \eqref{eq:assumption_gen_cvg} is given by \eqref{eq:psi_spherical}.
There is $C>0$ such that, for all $M>0$ and all $n\in \N$,
\begin{align*}
    L_{\psi,M,n}\leq CM\big(n^{-\frac{1}{2}}+\beta(n)\big).
\end{align*}

\end{Lemma}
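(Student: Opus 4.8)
At $t=0$ the enriched Hamiltonian \eqref{eq:H_n_expression} has no coupling term and the prior $\mathscr U_m\otimes\mathscr U_n$ is a product, so the partition function factorises, exactly as in the i.i.d.\ case of Lemma~\ref{lemma:cvg_initial_iid}; the only genuinely new input is a one-dimensional spherical-integral asymptotic. Write $v_1=\sqrt{2\hone}\,U+2\hone X\in\R^m$ and $v_2=\sqrt{2\htwo}\,V+2\htwo Y\in\R^n$. Using $|x|^2=m$ on $\sqrt m\,\mathbb{S}^{m-1}$ and $|y|^2=n$ on $\sqrt n\,\mathbb{S}^{n-1}$, one gets
\begin{align*}
    \bF_n(0,h)=-\frac{\hone m}{N}-\frac{\htwo n}{N}+\frac1N\E\log\int_{\sqrt m\,\mathbb{S}^{m-1}}e^{v_1\cdot x}\,\mathscr{U}_m(\d x)+\frac1N\E\log\int_{\sqrt n\,\mathbb{S}^{n-1}}e^{v_2\cdot y}\,\mathscr{U}_n(\d y).
\end{align*}
Thus it suffices to analyse one spherical block and then take the limits of the prefactors $m/N=\sqrt{m/n}$ and $n/N=\sqrt{n/m}$.

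\textbf{The one-dimensional spherical integral.} By rotational invariance $\int_{\sqrt m\,\mathbb{S}^{m-1}}e^{v\cdot x}\,\mathscr{U}_m(\d x)$ depends on $v$ only through $|v|$; call it $\Phi_m(|v|)$. Parametrising $x=\sqrt m\,\omega$ with $\omega$ uniform on $\mathbb{S}^{m-1}$, it equals $\int_{-1}^1 e^{|v|\sqrt m\,s}(1-s^2)^{(m-3)/2}\,\d s$ divided by the Beta-function normalisation. A Laplace-type estimate with explicit two-sided bounds (Stirling for the normalising constant) gives, for $\rho$ in any fixed bounded set,
\begin{align*}
    \Big|\tfrac1m\log\Phi_m(\rho\sqrt m)-g(\rho)\Big|\le\frac{C\log m}{m},\qquad g(\rho):=\sup_{s\in[-1,1]}\Big\{\rho s+\tfrac12\log(1-s^2)\Big\}.
\end{align*}
The maximiser is $s^*(\rho)=\tfrac{1}{2\rho}\big(\sqrt{1+4\rho^2}-1\big)$, so $g$ is $1$-Lipschitz ($g'=s^*$) and $g(\rho)=\tfrac12(\sqrt{1+4\rho^2}-1)+\tfrac12\log\tfrac{\sqrt{1+4\rho^2}-1}{2\rho^2}$.

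\textbf{Concentration of the random argument.} Put $\rho(\hone):=\sqrt{2\hone+4\hone^2}$, so $\E|v_1|^2=\rho(\hone)^2 m$. From $\E(|U|^2/m-1)^2=2/m$ and $\E(U\cdot X)^2=|X|^2=m$ one gets $\E\big(|v_1|^2/m-\rho(\hone)^2\big)^2\le C(\hone^2+\hone^3)/m$; combined with the elementary bound $|a-b|\le|a^2-b^2|/b$ and $\rho(\hone)\ge\sqrt{2\hone}$ this yields $\E\big||v_1|/\sqrt m-\rho(\hone)\big|\le C(1+\hone)/\sqrt m$. Since $0\le\tfrac1m\log\Phi_m(|v_1|)\le|v_1|/\sqrt m$ pathwise (Jensen below, Cauchy--Schwarz above), the contribution of the exponentially unlikely event $\{|v_1|/\sqrt m>CM\}$ is negligible, so for $\hone\in[0,M]$,
\begin{align*}
    \Big|\tfrac1m\E\log\Phi_m(|v_1|)-g(\rho(\hone))\Big|\le\frac{C\log m}{m}+\frac{C(1+M)}{\sqrt m}.
\end{align*}

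\textbf{Closing the computation and the main obstacle.} The identity $1+4\rho(\hone)^2=(1+4\hone)^2$ collapses the closed form above to $g(\rho(\hone))=2\hone-\tfrac12\log(1+2\hone)$, hence $-\hone+g(\rho(\hone))=\hone-\tfrac12\log(1+2\hone)$; multiplying by $m/N=\sqrt{m/n}$, using $|\sqrt{m/n}-\sqrt\alpha|\le\beta(n)/\sqrt\alpha$, and running the same argument on the $y$-block (with $n/N=\sqrt{n/m}$) identifies the limit $\psi=\lim_n\bF_n(0,\cdot)$ as the function in \eqref{eq:psi_spherical} and gives $L_{\psi,M,n}\le CM(n^{-1/2}+\beta(n))$: the $n^{-1/2}$ comes from $\sqrt{m/n}\cdot(1+M)/\sqrt m$ and $\sqrt{m/n}\cdot(\log m)/m$, the $\beta(n)$ from the mismatch of the prefactors. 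The main obstacle is making the Laplace estimate uniform in $\hone$ (equivalently $\rho\in[0,CM]$) with the correct $M$-dependence: the curvature $|\phi''(s^*)|$ of the exponent at its maximiser degenerates polynomially in $M$ as $\rho\to\infty$ (since $s^*\to1$), so one must check that the resulting $\log M$ factors, after multiplication by $m/N$, are still absorbed into $CMn^{-1/2}$; near $\rho=0$ the curvature stays bounded, so that regime is harmless. The factorisation, the second-moment bound for $|v_1|^2$, and the closed-form simplifications are otherwise routine.
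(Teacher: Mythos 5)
Your proposal is correct in outline but takes a genuinely different route from the paper, so let me compare the two. The paper proves \eqref{eq:cvg_decoupled_free_energy_spherical} by a Guerra--Toninelli interpolation: it introduces an interpolating Hamiltonian $\cH_s$ between the spherical system ($s=0$) and a Gaussian one ($s=1$), computes $f(1)=\hone-\tfrac12\log(1+2\hone)$ exactly by Gaussian integration, and bounds $|f'(s)|\le C\hone/\sqrt m$ via the concentration $\E(|\tX|-\sqrt m)^2\le C$. The whole $\hone$-dependence thus comes from a single Lipschitz estimate, and there is no asymptotic expansion to control. Your route instead computes the spherical block directly: rotational invariance reduces $\frac1m\E\log\Phi_m(|v_1|)$ to a one-dimensional integral, the Laplace method gives $\frac1m\log\Phi_m(\rho\sqrt m)\to g(\rho)$, and concentration of $|v_1|/\sqrt m$ around $\rho(\hone)$ closes the argument. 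This is more elementary in the sense that it avoids the interpolation machinery, but it requires uniform control of Laplace errors over the random range of $\rho$, which you yourself flag as the main delicacy. (A small correction to that remark: the curvature $|\phi''(s^*)|\sim\rho^2$ grows rather than degenerates as $\rho\to\infty$; the actual care required is that $s^*$ approaches the boundary $1$, but the peak width $\sim 1/(\rho\sqrt m)$ shrinks faster than the distance $1-s^*\sim 1/\rho$ to the boundary, so the one-sided Laplace estimate still works and the resulting $\log(m\rho^2)/m$ correction is absorbed after multiplying by $m/N$.)

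One genuine quantitative gap: your concentration step, followed by the \emph{global} $1$-Lipschitz bound on $g$, gives $\E\big|g(|v_1|/\sqrt m)-g(\rho(\hone))\big|\le C\sqrt{\hone(1+\hone)}/\sqrt m$, which you then loosen further to $C(1+\hone)/\sqrt m$. Near $\hone=0$ this is $\sqrt{\hone/m}$, not the $\hone/\sqrt m$ of \eqref{eq:cvg_decoupled_free_energy_spherical}; after taking $\sup_{h\in[0,M]^2}$ and multiplying by $m/N$, you obtain $C(1+M)(n^{-1/2}+\beta(n))$ rather than $CM(n^{-1/2}+\beta(n))$, which is weaker for $M<1$. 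Since the paper's Theorem~\ref{Prop:general_cvg_bF} only invokes the lemma for $M\ge1$, this does not affect the application, but it does fall short of the lemma as stated. To recover the stated rate you should replace the global Lipschitz constant of $g$ by the local one, $g'(\rho)=s^*(\rho)\le\min(1,\rho)$: since $\rho(\hone)\sim\sqrt{2\hone}$ and the fluctuation of $|v_1|/\sqrt m$ is of size $\sim\sqrt{\hone/m}$ on the relevant range, the product gives $\sim\hone/\sqrt m$, matching the paper.

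One last note: you write ``multiplying by $m/N=\sqrt{m/n}$ \dots identifies the limit $\psi$ as \eqref{eq:psi_spherical},'' and indeed $\sqrt{m/n}\to\sqrt\alpha$ while \eqref{eq:psi_spherical} displays $\alpha$; you inherit this apparent prefactor discrepancy from the paper itself, so your identification is consistent with the paper's claim, and the rest of your computation (the Fenchel-type closed form $g(\rho(\hone))=2\hone-\tfrac12\log(1+2\hone)$, the variance bound for $|v_1|^2/m$, the factorisation) checks out.
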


As a consequence, Proposition~\ref{Prop:cvg_iid} and Proposition~\ref{Prop:cvg_spherical} follows from these lemmas and Theorem~\ref{Prop:general_cvg_bF}.

\subsection{Proofs of Lemma~\ref{lemma:cvg_initial_iid} and Lemma~\ref{lemma:cvg_initial_spherical}}
Due to the independence of $X$ and $Y$, we write $P^{X,Y}_n(\d x,\d y) = P^X_m(\d x)\otimes P^Y_n(\d y)$. Then, we have
\begin{align*}
    \bF_n(0,h)= \bF_n(0,\hone,0)+\bF_n(0,0,\htwo)
\end{align*}
with
\begin{align*}
    \bF_n(0,\hone,0) &= \frac{1}{N}\E\log\int e^{\sqrt{2\hone} U\cdot x +2\hone X\cdot x - \hone|x|^2}P^X_m(\d x),\\
    \bF_n(0,0,\htwo) &= \frac{1}{N}\E\log\int e^{\sqrt{2\htwo} V\cdot y +2\htwo Y\cdot y - \htwo|y|^2}P^Y_n(\d y).
\end{align*}
Recall $N=\sqrt{mn}$. By \eqref{eq:bF<C(t+|h|)}, there is $C>0$ such that
\begin{align}\label{eq:bd_phi_k,n}
    |\bF_n(0,\hone,0) |,\ |\bF_n(0,0,\htwo)|\leq C |h|,\quad \forall n\in \N.
\end{align}

\subsubsection{The i.i.d\ case}
By the entry-wise independence, we can compute
\begin{align*}
    \bF_n(0,\hone,0) = \sqrt{\frac{m(n)m(1)}{n}}\bF_1(0,\hone,0),\qquad \bF_n(0,0,\htwo)= \sqrt{\frac{n m(1)}{m(n)}}\bF_1(0,0,\htwo).
\end{align*}
Due to \eqref{eq:m/n_to_alpha}, and the above displays, we have that $\bF_n(0,h)$ converges locally uniformly to 
\begin{align*}
    \psi(h) = \sqrt{\alpha m(1)}\bF_1(0,\hone,0) + \sqrt{\alpha^{-1}m(1)}\bF_1(0,0,\htwo).
\end{align*}
Due to \eqref{eq:def_beta(n)} and \eqref{eq:bd_phi_k,n}, for any $M>0$, we have the following convergence rate estimate
\begin{align*}
    \sup_{h\in[0,M]^2}\big|\bF_n(0,h)-\psi(h)|\leq CM\beta(n).
\end{align*}
The above two displays are exactly the content of Lemma~\ref{lemma:cvg_initial_iid}.

\subsubsection{The spherical case}
As in Section~\ref{section:spherical}, we denote the uniform measure on $\sqrt{k}\mathbb{S}^{k-1}$ by $\mathscr{U}_k$.
Note that
\begin{align*}
    \sqrt{\frac{n}{m}}\bF_1(0,\hone,0) = \frac{1}{m}\E\log\int e^{\sqrt{2\hone} U\cdot x +2\hone X\cdot x - \hone|x|^2}\mathscr{U}_m(\d x)
\end{align*}
is the free energy associated with the Hamiltonian $x\mapsto\sqrt{2\hone }U\cdot x +2\hone X\cdot x -\hone|x|^2$. The following estimate can be computed using the standard interpolation method. There is $C>0$ such that, for all $n\in\N$ and all $\hone \in \R_+$,
\begin{align}\label{eq:cvg_decoupled_free_energy_spherical}
    \bigg|\sqrt{\frac{n}{m}}\bF_1(0,\hone,0) - \Big(\hone -\frac{\log(1+2\hone)}{2}\Big)\bigg| \leq C \frac{\hone}{\sqrt{m}}.
\end{align}
For completeness, we briefly sketch the key steps in the proof of \eqref{eq:cvg_decoupled_free_energy_spherical} and omit some computations. Details can be seen in \cite[Appendix A]{luneau2020high}.

\begin{proof}[Sketch of proof]
Let $\tX$ and $\tU$ be two standard Gaussian vectors in $\R^m$, independent from each other and from other randomness. Let $P^{\tX}_m$ be the law of $\tX$. Note that $\sqrt{m}\tX/|\tX|$ has the same distribution as $\mathscr{U}_m$. For $s\in[0,1]$, we introduce an interpolating Hamiltonian 
\begin{align*}
    \cH_s(\hone, x) &= \bigg(\sqrt{2\hone(1-s)m}U\cdot \frac{x}{|x|} + 2\hone(1-s)m \frac{\tX\cdot x}{|\tX||x|}-\hone(1-s)m\bigg|\frac{x}{|x|}\bigg|^2 \bigg)\\
    &\quad +\bigg(\sqrt{2\hone s}\tU\cdot x +2\hone s \tX \cdot x-\hone s|x|^2\bigg).
\end{align*}
and define, for fixed $m$ and $\hone$,
\begin{align*}
    f(s) = \frac{1}{m}\E\log\int e^{\cH_s(\hone, x)}P^{\tX}_m(\d x).
\end{align*}
It is immediate that $\sqrt{n/m}\bF_1(0,\hone,0)=f(0)$. For $s=1$, by computing a Gaussian integration, we can see $f(1)=h_1 -\frac{1}{2}\log(1+2\hone)$. Since $|f(1)-f(0)|\leq \sup_{s\in[0,1]}|f'(s)|$, the next step is to estimate $|f'(s)|$. After some computation (see \cite[(30)-(31)]{luneau2020high}), we can obtain
\begin{align*}
    |f'(s)|\leq 2\frac{\hone}{\sqrt{m}} \Big(\E\big(|\tX|-\sqrt{m}\big)^2\Big)^\frac{1}{2}.
\end{align*}
Using the standard estimate of $\tX$ concentrating near $\sqrt{m}\mathbb{S}^{m-1}$ as $m\to \infty$ (see \cite[Theorem 3.1.1]{vershynin2018high}), we can bound the above display by $C\hone /\sqrt{m}$, achieving the result.

\end{proof}

Similarly, we also have
\begin{align*}
    \bigg|\sqrt{\frac{m}{n}}\bF_1(0,0,\htwo) - \Big(\htwo -\frac{\log(1+2\htwo)}{2}\Big)\bigg| \leq C \frac{\htwo}{\sqrt{n}}.
\end{align*}
Let us set 
\begin{align*}
    \psi(h) = \alpha \Big(\hone -\frac{\log(1+2\hone)}{2}\Big) + \alpha^{-1}\Big(\htwo -\frac{\log(1+2\htwo)}{2}\Big).
\end{align*}
These displays along with \eqref{eq:cvg_decoupled_free_energy_spherical}, \eqref{eq:def_beta(n)} and \eqref{eq:bd_phi_k,n} imply that, for some $C>0$,
\begin{align*}
    \sup_{h\in[0,M]^2}\big|\bF_n(0,h)-\psi(h)| \leq CM\big( n^{-\frac{1}{2}}+\beta(n)\big).
\end{align*}
This completes the proof of Lemma~\ref{lemma:cvg_initial_spherical}.

\subsection{Proof of Lemma~\ref{lemma:concentration}}

The plan is to first obtain an  estimate of $\E e^{\lambda^2 n|F_n-\bF_n|^2}$ for small $\lambda>0$ pointwise at each $(t,h)\in [0,M]^3$. Then, we use an $\eps$-net argument to bound $\E \sup_{(t,h)\in[0,M]^3} e^{\lambda^2 n |F_n-\bF_n|}$. The desired result follows from Jensen's inequality.

\subsubsection{Pointwise estimate}

Let $(t,h)\in [0,M]^3$. 
Denote by $G=(W,U,V)$ the Gaussian vector consisting of all Gaussian random variables in $F_n$. We also write $\E_G$, $\E_X$, $\E_Y$ as the expectation integrating over $G$, $X$, $Y$, respectively. Let $\lambda>0$ be chosen later. Using H\"older's inequality, we have
\begin{align}
    \E e^{\lambda |F_n-\bF_n|} &\leq \E\Big( e^{\lambda |F_n-\E_XF_n|}e^{\lambda |\E_X F_n-\E_{X,Y}F_n|}e^{\lambda |\E_{X,Y} F_n-\E_{X,Y,G}F_n|}\Big)\nonumber\\
    &= \Big(\E e^{3\lambda |F_n-\E_XF_n|}\Big)^\frac{1}{3}\Big(\E e^{3\lambda |\E_X F_n-\E_{X,Y}F_n|}\Big)^\frac{1}{3}\Big(\E e^{3\lambda |\E_{X,Y} F_n-\E_{X,Y,G}F_n|}\Big)^\frac{1}{3}.\label{eq:exp_moment_F_n}
\end{align}

To treat the last term, we will use the Gaussian concentration inequality. 
We start by computing
\begin{align*}
    \partial_{W_{ij}}F_n = \frac{1}{N}\sqrt{\frac{2t}{N}}\la x_i y_j\ra ,\quad 
    \partial_{U_i}F_n = \frac{1}{N}\sqrt{2\hone}\la x_i\ra, \quad \partial_{V_j}F_n = \frac{1}{N}\sqrt{2\htwo}\la y_j\ra.
\end{align*}
Therefore, by \eqref{eq:support_X_Y}, we have
\begin{align*}
    |\nabla_G F_n|^2& = \sum_{i,j}|\partial_{W_{ij}}F_n|^2 + \sum_{i}|\partial_{U_i}F_n|^2+\sum_j|\partial_{V_j}F_n|^2\\
    &=\frac{2t}{N^3}\la (x\cdot x')(y\cdot y')\ra  + \frac{2\hone}{N^2}\la x\cdot x'\ra + \frac{2\htwo}{N^2}\la y\cdot y'\ra \leq CMn^{-1}.
\end{align*}
Invoking \cite[Theorem 5.5]{boucheron2013concentration}, we obtain
\begin{align}\label{eq:G_exp}
    \E_G e^{\lambda |\E_{X,Y} F_n-\E_{X,Y,G}F_n|} \leq e^{C\lambda^2Mn^{-1}}.
\end{align}

Then, we treat the first two terms in \eqref{eq:exp_moment_F_n}. In the i.i.d.\ case, due to the boundedness assumption $|X_1|,|Y_1|\leq 1$, we have
\begin{align}\label{eq:dX_iF_n}
    \big|\partial_{X_i}F_n\big|=\Big|\frac{1}{N}\Big\la \frac{2t}{N}\sum_j Y_jx_iy_j+2h_1x_i \Big\ra\Big| \leq CMn^{-1}.
\end{align}
Using the boundedness again and \cite[Theorem 6.2]{boucheron2013concentration} (see the penultimate display in its proof), we obtain
\begin{align}\label{eq:X_exp}
    \E_Xe^{\lambda |F_n - \E_X F_n|}\leq Ce^{C\lambda^2M^2n^{-1}}.
\end{align}
Applying the same argument to the second term in \eqref{eq:exp_moment_F_n}, we have
\begin{align}\label{eq:Y_exp}
    \E_Ye^{\lambda |\E_X F_n - \E_{X,Y} F_n|}\leq Ce^{C\lambda^2M^2n^{-1}}.
\end{align}

Now, let us turn to the spherical case. Since $|X|,|Y|\leq C\sqrt{n}$, using the expression of $\partial_{X_i}F_n$ in \eqref{eq:dX_iF_n}, we have
\begin{align}\label{eq:grad_X_F_n}
    |\nabla_X F_n|\leq CMn^{-\frac{1}{2}}.
\end{align}
We need Levy's inequality stated below (see \cite[Corollary 5.4]{meckes2019random}).

\begin{Lemma}
Let $f:\mathbb{S}^{n-1}\to\R$ be Lipschitz with Lipschitz constant $L$, and let $\mathcal{X}$ be distributed uniformly on $\mathbb{S}^{n-1}$. Then there are constants $C, c>0$ such that
\begin{align*}
    \P\{|f(\mathcal{X})-\E f(\mathcal{X})|\geq Lt\}\leq Ce^{-cnt^2}
\end{align*}
\end{Lemma}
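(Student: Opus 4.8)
The plan is to reduce to $L=1$ by homogeneity, to work with the median $m$ of $f(\cX)$ in place of the mean, and then to invoke the concentration of measure on the sphere. Write $\mu$ for the uniform probability measure on $\mathbb{S}^{n-1}$ and let $A=\{x:f(x)\le m\}$, so that $\mu(A)\ge\tfrac12$; since $f$ is $1$-Lipschitz, $\{f>m+t\}$ is contained in the complement of the $t$-neighbourhood $A_t$ of $A$ (with respect to the geodesic distance, say; passing to the chordal distance only changes constants). Thus everything reduces to a uniform lower bound on $\mu(A_t)$ valid for every set $A$ with $\mu(A)\ge\tfrac12$.

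That bound is the spherical isoperimetric inequality of L\'evy and Schmidt: among sets of a given measure, geodesic balls minimise the measure of the $t$-enlargement. Comparing $A$ with a hemisphere, whose $t$-enlargement is the cap of geodesic radius $\tfrac\pi2+t$ and hence has complement a cap of radius $\tfrac\pi2-t$, and estimating the measure of that cap directly (integrating $\sin^{n-2}$), one obtains
\begin{align*}
    \mu\big(\mathbb{S}^{n-1}\setminus A_t\big)\ \le\ Ce^{-cnt^2}.
\end{align*}
Applying the same argument with $A=\{f\ge m\}$ controls $\{f<m-t\}$, and the two bounds combine to $\P\{|f(\cX)-m|\ge t\}\le Ce^{-cnt^2}$.

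To replace the median by the mean, integrate this tail bound:
\begin{align*}
    |\E f(\cX)-m|\ \le\ \E\big|f(\cX)-m\big|\ =\ \int_0^\infty \P\{|f(\cX)-m|\ge s\}\,\d s\ \le\ \frac{C}{\sqrt n}.
\end{align*}
Hence for $t\ge 2C/\sqrt n$ one has $\P\{|f(\cX)-\E f(\cX)|\ge t\}\le \P\{|f(\cX)-m|\ge t/2\}\le Ce^{-cnt^2/4}$, while for $t<2C/\sqrt n$ the claimed estimate holds trivially after enlarging $C$. Undoing the normalisation (replacing $f$ by $f/L$) gives the statement; the low-dimensional cases $n=1,2$ are absorbed into the constant since the probability is always at most $1$.

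The only genuinely nontrivial ingredient above is the dimensional concentration estimate on the sphere. One can bypass the isoperimetric inequality altogether: the round metric on $\mathbb{S}^{n-1}$ has Ricci curvature bounded below by $n-2$, so by the Bakry--\'Emery criterion $\mu$ satisfies a logarithmic Sobolev inequality with constant $O(1/n)$, and Herbst's argument then yields $\E e^{\lambda(f(\cX)-\E f(\cX))}\le e^{C\lambda^2/n}$ for $1$-Lipschitz $f$, from which the same sub-Gaussian tail follows by optimising over $\lambda$. Either route reaches the conclusion; the work is entirely in importing one of these classical estimates, which is why the lemma is simply quoted from \cite{meckes2019random}.
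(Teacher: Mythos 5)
Your argument is correct, but it is worth noting that the paper does not attempt to prove this statement at all: it simply quotes it from \cite[Corollary 5.4]{meckes2019random}, which is why it is stated without a proof environment. What you have supplied is the classical proof of L\'evy's lemma, and all the steps check out. The reduction to $L=1$ is immediate; the passage to the median $m$ via $A=\{f\le m\}$ and the observation that $1$-Lipschitzness places $\{f>m+t\}$ outside the geodesic $t$-enlargement $A_t$ is exactly right; the L\'evy--Schmidt isoperimetric inequality reduces the problem to estimating the mass of a cap of geodesic radius $\tfrac{\pi}{2}-t$, which the $\sin^{n-2}$ integral bounds by $Ce^{-cnt^2}$; and the transfer from median to mean by integrating the tail to get $|\E f(\cX)-m|\le C/\sqrt n$ and then splitting on whether $t$ exceeds $2C/\sqrt n$ is the standard bookkeeping. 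The low-dimensional cases are indeed absorbed into $C$ since $|f-\E f|$ is bounded by the diameter times $L$. Your alternative route --- Ricci $\ge n-2$ on $\mathbb{S}^{n-1}$, hence a log-Sobolev inequality with constant $O(1/n)$ by Bakry--\'Emery, hence the sub-Gaussian moment generating function bound by Herbst's argument --- is an equally valid and in some ways cleaner path, since it avoids the median detour entirely and yields the mean-centered bound directly. Either derivation is acceptable; the paper's choice to cite rather than reprove is simply because the lemma is a textbook result and reproducing the isoperimetric input would be a lengthy digression from the matrix inference problem at hand.
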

Note that $X$ is uniform on $\sqrt{m}\mathbb{S}^{m-1}$. Set $\mathcal{X} = X/\sqrt{m}$. Then, \eqref{eq:grad_X_F_n} becomes $|\nabla_{\mathcal{X}}F_n| \leq CM$. Hence, the above lemma implies
\begin{align*}
    \P_X\{|F_n - \E_X F_n|\geq t\}\leq Ce^{-cnM^{-2}t^2}.
\end{align*}
Now apply \cite[Proposition 2.5.2]{vershynin2018high} to see that \eqref{eq:X_exp} holds, and so does \eqref{eq:Y_exp} via the same method.

In conclusion, \eqref{eq:exp_moment_F_n}, \eqref{eq:G_exp}, \eqref{eq:X_exp} and \eqref{eq:Y_exp}, with $\lambda$ replaced by $\lambda \sqrt{n}$ yield
\begin{align*}
    \E e^{\lambda \sqrt{n}|F_n-\bF_n|} \leq Ce^{C\lambda^2M^2}.
\end{align*}
\cite[Proposition 2.5.2]{vershynin2018high} implies that, for $\lambda$ sufficiently small,
\begin{align}\label{eq:F_n_ptw_concentration}
    \E e^{\lambda^2 n|F_n-\bF_n|^2}\leq Ce^{C\lambda^2 M^2}.
\end{align}

\subsubsection{Application of an $\eps$-net argument}

The goal is upgrade \eqref{eq:F_n_ptw_concentration} to a bound on  $\E\sup_{(t,h)\in[0,M]^3}e^{\lambda^2n|F_n-\bF_n|^2}$. The estimate \eqref{eq:1st_der_F_n_est} implies that, for $|t-t'|+|h-h'|\leq 1$,
\begin{align*}
    |F_n(t,h)- F_n(t',h')|\leq C\Big(1+n^{-\frac{1}{2}}\big(|W|_\mathrm{op}+|U|+|V|\big)\Big)\big(|t-t'|^\frac{1}{2}+|h-h'|^\frac{1}{2}\big).
\end{align*}
For $\eps\in(0,1]$, let us introduce the $\eps $-net
\begin{align*}
    A_\eps = \{\eps,2\eps, 3\eps\dots\}^3\cap [0,M]^3.
\end{align*}
Hence, for $\lambda$ small, we have
\begin{align}
    &\E\sup_{(t,h)\in[0,M]^3}e^{\lambda^2 n|F_n-\bF_n|^2}\nonumber\\
    &\leq \E \Big(\sup_{(t,h)\in A_\eps} e^{\lambda^2n|F_n-\bF_n|^2}\Big)  \exp\Big( C\lambda^2\eps\big(n+|W|^2_\mathrm{op}+|U|^2+|V|^2)\big)\Big)\nonumber\\
    &\leq \bigg(\E\sup_{(t,h)\in A_\eps}e^{2\lambda^2n|F_n-\bF_n|^2}\bigg)^\frac{1}{2}\bigg(\E \exp\Big( C\lambda^2 \eps\big(n+|W|^2_\mathrm{op}+|U|^2+|V|^2\big)\Big)\bigg)^\frac{1}{2}\label{eq:exp_sup_F_n}
\end{align}
where we used the Cauchy--Schwarz inequality in the second inequality.
Since $|A_\eps|\leq (M/\eps)^3$. Using the union bound and \eqref{eq:F_n_ptw_concentration}, we have
\begin{align}\label{eq:exp_sup_A_eps_F_n}
    \bigg(\E\sup_{(t,h)\in A_\eps}e^{2\lambda^2n|F_n-\bF_n|^2}\bigg)^\frac{1}{2} \leq C(M/\eps)^\frac{3}{2}e^{C\lambda^2 M^2}, \quad \lambda \in \R.
\end{align}

Set $\eps = C^{-1}n^{-1}$ in \eqref{eq:exp_sup_F_n} with $C$ therein, and use \eqref{eq:exp_sup_A_eps_F_n} to see
\begin{align*}
\begin{split}
    &\E\sup_{(t,h)\in[0,M]^3}e^{\lambda^2 n|F_n-\bF_n|^2}\\
    &\leq C (Mn)^\frac{3}{2} e^{C\lambda^2 M^2} \bigg(\E \exp\Big( \lambda^2\big(1+n^{-1}(|W|^2_\mathrm{op}+|U|^2+|V|^2)\big)\Big)\bigg)^\frac{1}{2}.
\end{split}
\end{align*}
We claim that, for small $\lambda>0$, 
\begin{align}\label{eq:claim_concentration}
    \E \exp\Big( \lambda^2\big(1+n^{-1}(|W|^2_\mathrm{op}+|U|^2+|V|^2)\big)\Big) \leq C.
\end{align}
This immediately gives
\begin{align*}
    \E\sup_{(t,h)\in[0,M]^3}e^{\lambda^2 n |F_n-\bF_n|^2}\leq C(Mn)^\frac{3}{2}e^{C\lambda^2 M^2} .
\end{align*}
Finally, using Jensen's inequality, we conclude that
\begin{align*}
    \E\sup_{(t,h)\in[0,M]^3}|F_n-\bF_n|^2&\leq \lambda^{-2}n^{-1}\log\Big( \E \sup_{(t,h)\in[0,M]^3}e^{\lambda^2n|F_n - \bF_n|^2}\Big)\\
    &\leq Cn^{-1}(M^2 + \log n),
\end{align*}
as desired. The proof will be complete once \eqref{eq:claim_concentration} is verified.

\subsubsection{Proof of \eqref{eq:claim_concentration}}

We want to bound exponential moments of $|W|_\mathrm{op}$, $|U|$ and $|V|$. Using the fact that $U$ and $V$ are standard Gaussian in $\R^m$ and $\R^n$, respectively, we have, for $\lambda$ small,
\begin{align}\label{eq:exp_|U|_exp_|V|}
    \E e^{\lambda^2 n^{-1}|U|^2},\  \E e^{\lambda^2 n^{-1}|V|^2} \leq C.
\end{align}

Then, we try to bound $\E e^{\lambda^2 |W|^2_\mathrm{op}}$. For $x\in \R^m$ with $|x|\leq  1$, $\{(Wx)_i\}_{i=1}^n$ is a vector in $\R^n$ consisting of independent centered Gaussian entries with variance $|x|^2\leq 1$. Hence, there are $C,c>0$ such that for $\lambda $ small
\begin{align*}
    \E e^{\lambda^2 |Wx|^2} \leq Ce^{c\lambda^2 n},
\end{align*}
which by Chebyshev's inequality implies that
\begin{align}\label{eq:|Wx|_exp}
    \P\{\lambda^2|Wx|^2\geq t\}\leq Ce^{-t + c\lambda^2n}.
\end{align}

Now let us consider a finite set $B\subset \{x\in \R^m:|x|\leq 1\}$ with the following properties:
\begin{itemize}
    \item for any distinct $x,y\in B$, we have $|x-y|> \frac{1}{2}$;
    \item for every $x\in \{x\in \R^m:|x|\leq 1\}$, there is $y\in B$ such that $|x-y|\leq \frac{1}{2}$.
\end{itemize}
By the first property, all balls with radius $\frac{1}{2}$ and centered at a point in $B$ are disjoint. Since the union of these balls is contained in a ball of radius $2$, the size of $B$ satisfies $(1/2)^m|B| \leq 2^m$. Also recall \eqref{eq:m/n_to_alpha}. Hence, there is $a>0$ such that,
\begin{align}\label{eq:|B|}
    |B|\leq 4^m\leq a^n.
\end{align}

The second property of $B$, along with the fact $|Wx-Wy|\leq |W|_\mathrm{op}|x-y|$, implies that
\begin{align*}
    |W|_\mathrm{op}=\sup_{|x|\leq 1}|Wx|\leq \sup_{x\in B}|Wx|+\frac{1}{2}|W|_\mathrm{op}.
\end{align*}
Therefore, we have $|W|_\mathrm{op}\leq 2\sup_{x\in B}|Wx|$. This along with \eqref{eq:|Wx|_exp} and \eqref{eq:|B|} yields
\begin{align*}
    \P\{e^{\lambda^2 n^{-1}|W|^2_\mathrm{op}}\geq t\}\leq Ca^n \exp\big(-c'n\log t +cn\lambda^2\big)\leq C \bigg(\frac{a e^{c\lambda^2}}{t^{c'}}\bigg)^n,
\end{align*}
for an additional constant $c'>0$.
Writing $b=(ae^{c\lambda})^\frac{1}{c'}$, we have, for $n$ large,
\begin{align*}
    \E e^{\lambda^2 n^{-1}|W|^2_\mathrm{op}} = \int_0^\infty \P\{e^{\lambda^2 n^{-1}|W|^2_\mathrm{op}}\geq t\}\d t\leq b + \int_b^\infty C\bigg(\frac{b}{t}\bigg)^{c'n}\d t=b+\frac{Cb}{c'n-1}\leq C.
\end{align*}
This and \eqref{eq:exp_|U|_exp_|V|} imply \eqref{eq:claim_concentration}.

\subsection{Discussion on the sparse model}\label{section:sparse}

Since one of the goals of this work is to convey the versatility of our approach, we briefly discuss possible modifications of our arguments to study the sparse model, described below.

\smallskip

Let $P_1$ and $P_2$ be probability distributions supported on $[-1,1]$. Consider two sequences $\{\rho_{1,n}\}_{n=1}^\infty$ and $\{\rho_{2,n}\}_{n=1}^\infty$ of real numbers in $[0,1]$. They are interpreted as the sparsity parameters. Typically, at least one of the parameters goes to zero as $n\to\infty$.  Let $X$ and $Y$ be distributed according to
\begin{align*}
    P^{X,Y}_n = \Big(\rho_{1,n}P_1 + (1-\rho_{1,n})\delta_0\Big)^{\otimes m} \otimes \Big(\rho_{2,n}P_2 + (1-\rho_{2,n})\delta_0\Big)^{\otimes n}
\end{align*}
where $\delta_0$ is the Dirac measure at $0$. In other words, $X$ and $Y$ have i.i.d.\ entries and their distributions have certain degrees of sparsity. To compensate for this sparsity of signal, it is natural to enlarge the signal-to-noise ratio correspondingly. Namely, we introduce another sequence $\{t_n\}_{n=1}^\infty$ with $\lim_{n\to\infty}t_n=\infty$. 

\smallskip

For each $n\in\N$, let $f_n$ be the weak solution to \eqref{eq:general_2d_HJ} with initial condition $\psi_n = \bF_n(0,\cdot)$. We are interested in the asymptotics of $\bF_n(t_n,\cdot) - f_n(t_n,\cdot)$ as $n\to\infty$. Recall the definition of $\mathcal{A}(c)$ introduced above Theorem~\ref{thm:most_general_cvg}. It can be checked that all $\psi_n$ belong to $\mathcal{A}(c)$ for some $c>0$. Hence, Theorem~\ref{thm:most_general_cvg} is applicable here. However, the issue is that $t_n$ now diverges to $\infty$, while the estimate in Theorem~\ref{thm:most_general_cvg} depends polynomially on $t_n$. Therefore, more effort is needed to get meaningful estimates.

\smallskip 

Since we are mostly interested in asymptotics when $h$ is small, one place to start is to choose a better region $A_n=[0,t_n]\times [0,a_n]\times[0,b_n]$ instead of blindly applying Theorem~\ref{thm:most_general_cvg} to the region $[0,t_n]\times[0,t_n]^2$.
Examining its proof, we can notice that $M$ is linked to the choice of $R$ defined after \eqref{eq:div_a_n^eps_lower_bound}. In its current form, we artificially add $1$ to the definition of $R$, but actually it can be made to have the same order as $\|\bF_n\|_\mathrm{Lip}+\|f_n\|_\mathrm{Lip}$. Even further, instead of the circular region \eqref{eq:def_D_t}, we can choose a rectangular region with two sides varying differently, the orders of which depend on $\|\partial_{h_k} \bF_n\|_\infty$ and $\|\partial_{h_k}f_n\|_\infty$. These quantities can be bounded in terms of powers of $\rho_{k,n}$. From these considerations, also taking into account the specific orders of $t_n$ and $\rho_{k,n}$, we can determine a proper $A_n$.

\smallskip

In the proof of Theorem~\ref{thm:most_general_cvg}, we used several estimates from Lemma~\ref{lemma:useful_estimates}. In the sparsity case, we can improve these estimates by obtaining decay rates  
in terms of the sparsity parameters.  Finally, after we have chosen a better region $A_n$, the quantities $K$ and $L$ in \eqref{eq:def_K}-\eqref{eq:def_L} can be redefined accordingly. Again, we can use the sparsity to improve many related estimates.

\bibliographystyle{abbrv}
\end{document}